\theoremstyle{thmstyletwo}%
\newtheorem{theorem}{Theorem}%  meant for continuous numbers
\newtheorem{proposition}[theorem]{Proposition}%
\newtheorem{lemma}[theorem]{Lemma}%
\numberwithin{equation}{section}
\renewcommand{\nu}{\textnormal{n}}
\newcommand{\Om}{\varOmega}
\newcommand{\Omh}{\varOmega_h}
\newcommand{\Omt}{\varOmega(t)}
\newcommand{\Omhx}{\varOmega_h[\bfx]}
\newcommand{\Ga}{\varGamma}
\newcommand{\Gah}{\varGamma_h}
\newcommand{\Gat}{\varGamma(t)}
\newcommand{\Gahx}{\varGamma_h[\bfx]}
\newcommand{\dnu}{\partial_\nu}
\newcommand{\dtheta}{\,\mathrm{d}\theta}
\newcommand{\mat}{\partial^\bullet}
\newcommand{\nb}{\nabla}
\newcommand{\nbg}{\nabla_\Ga}
\newcommand{\nbgh}{\nabla_{\Ga_h}}
\newcommand{\nbghx}{\nabla_{\Ga_h[\bfx]}}
\newcommand{\lb}{\Delta_\Ga}
\newcommand{{\dGa}}{\kappa}
\newcommand{\tr}{\gamma}
\newcommand{\trh}{\gamma_h}
\newcommand{\T}{^{\mathrm{T}}}
\newcommand\bfd{{\mathbf d}}
\newcommand\bfe{{\mathbf e}}
\newcommand\bff{{\mathbf f}}
\newcommand\bfp{{\mathbf p}}
\newcommand\bfn{{\mathbf n}}
\newcommand\bfu{{\mathbf u}}
\newcommand\bfv{{\mathbf v}}
\newcommand\bfw{{\mathbf w}}
\newcommand\bfx{{\mathbf x}}
\newcommand\bfz{{\mathbf z}}
\newcommand\bfA{{\mathbf A}}
\newcommand\bfD{{\mathbf D}}
\newcommand\bfH{{\mathbf H}}
\newcommand\bfK{{\mathbf K}}
\newcommand\bfL{{\mathbf L}}
\newcommand\bfM{{\mathbf M}}
\newcommand\bfQ{{\mathbf Q}}
\newcommand\bfV{{\mathbf V}}
\newcommand\bfphi{{\boldsymbol \varphi}}
\newcommand\bfzero{{\mathbf 0}}
\newcommand\calI{{\mathcal I}}
\newcommand\calP{{\mathcal P}}
\newcommand\calS{{\mathcal S}}
\newcommand\calT{{\mathcal T}}
\newcommand\calV{{\mathcal V}}
\newcommand{\bftr}{\bm{\gamma}}
\newcommand{\bffu}{\bff_{u}}
\newcommand\andquad{\quad\hbox{ and }\quad}
\newcommand\for{\quad\hbox{ for }\quad}
\renewcommand{\d}{\text{d}}
\newcommand{\ga}{\gamma}
\newcommand{\laplace}{\Delta}
\newcommand{\diff}{\frac{\d}{\d t}}
\newcommand{\eps}{\varepsilon}
\newcommand{\inv}{^{-1}}
\newcommand{\pa}{\partial}
\newcommand{\R}{\mathbb{R}}
\def \t {(t)}
\def \to {\rightarrow}
\newcommand{\vphi}{\varphi}
\newcommand{\Ih}{\widetilde{I}_h}
\newcommand{\wt}{\widetilde}
\newcommand{\aalpha}{\alpha }
\newcommand{\normMs}[1]{\| #1 \|_{\bfM_\Ga(\xs(s))}}
\newcommand{\normKt}[1]{\| #1\|_{\bfK_\Ga(\xs(t))}}
\newcommand{\normKs}[1]{\| #1\|_{\bfK_\Ga(\xs(s))}}
\newcommand{\normKo}[1]{\| #1\|_{\bfK_\Ga(\xs(0))}}
\newcommand{\Hs}{\bfH^\ast}
\newcommand{\ns}{\bfn^\ast}
\newcommand{\us}{\bfu^\ast}
\newcommand{\vs}{\bfv^\ast}
\newcommand{\Vs}{\bfV^\ast}
\newcommand{\ws}{\bfw^\ast}
\newcommand{\xs}{\bfx^\ast}
\newcommand{\xls}{\bfx_\ast}
\newcommand{\eu}{\bfe_\bfu}
\newcommand{\ev}{\bfe_\bfv}
\newcommand{\ex}{\bfe_\bfx}
\newcommand{\en}{\bfe_\bfn}
\newcommand{\eH}{\bfe_\bfH}
\newcommand{\dv}{\bfd_\bfv}
\newcommand{\Rh}{\widetilde{R}_h}
\renewcommand{\a}{\overline{a}}
\newenvironment{rcases}
{\left.\begin{aligned}}
	{\end{aligned}\right\rbrace}
\newcommand{\n}{\nu}
\newcommand{\clb}{\mu}
\newcommand{\dof}{N}
\newcommand{\ecl}{\color{black}}
\begin{document}

\DOI{}
\copyrightyear{2024}
\vol{}
\pubyear{2024}
\access{}
\appnotes{Paper}
\copyrightstatement{}
\firstpage{1}

%\subtitle{Subject Section}

\title[Numerical analysis of a tumour growth model]{Numerical analysis of an evolving bulk--surface model of tumour growth}

\author{Dominik~Edelmann
\address{\orgdiv{Mathematisches Institut}, \orgname{Universit\"at T\"{u}bingen}, \orgaddress{\street{Auf der Morgenstelle 10.}, \postcode{72076}, \state{Tübingen}, \country{Germany}}}}
\author{Bal\'{a}zs~Kov\'{a}cs*
\address{\orgdiv{Institute of Mathematics}, \orgname{Paderborn University}, \orgaddress{\street{Warburgerstr.~100.}, \postcode{33098}, \state{Paderborn}, \country{Germany}}}}
\author{Christian~Lubich
\address{\orgdiv{Mathematisches Institut}, \orgname{Universit\"at T\"{u}bingen}, \orgaddress{\street{Auf der Morgenstelle 10.}, \postcode{72076}, \state{Tübingen}, \country{Germany}}}}

\authormark{D.~Edelmann, B.~Kov\'{a}cs and Ch.~Lubich}

%\corresp[*]{Corresponding author: \href{email:balazs.kovacs@math.uni-paderborn.de}{balazs.kovacs@math.uni-paderborn.de}}
%
%\received{17}{1}{2024}
%\revised{Date}{0}{Year}
%\accepted{Date}{0}{Year}

%\editor{Associate Editor: Name}

\abstract{This paper studies an evolving bulk--surface finite element method for a model of tissue growth, which is a modification of the model of Eyles, King and Styles (2019). The model couples a Poisson equation on the domain with a forced mean curvature flow of the free boundary, with nontrivial bulk--surface coupling in both the velocity law of the evolving surface and the boundary condition of the Poisson equation. The numerical method discretizes evolution equations for the mean curvature and the outer normal and it uses a harmonic extension of the surface velocity into the bulk. The discretization admits a convergence analysis in the case of continuous finite elements of polynomial degree at least two. The stability of the discretized bulk--surface coupling is a major concern. The error analysis combines stability estimates and consistency estimates to yield optimal-order $H^1$-norm error bounds for the computed tissue pressure and for the surface position, velocity, normal vector and mean curvature. 
Numerical experiments illustrate and complement the theoretical results.}

\keywords{free boundary problem; bulk--surface coupling; geometric evolution equations; forced mean curvature flow; evolving surface finite elements; stability; convergence analysis.}

% \boxedtext{
% \begin{itemize}
% \item Key boxed text here.
% \item Key boxed text here.
% \item Key boxed text here.
% \end{itemize}}

\maketitle

\section{Introduction}
Eyles, King and Styles \citep{EKS19} proposed and studied a `tractable' model for tumour growth that determines the evolving tumour domain $\varOmega(t)$ at time $t$ with the free boundary surface $\Ga(t)$ together with the tissue pressure $u(x,t)$ on $\varOmega(t)$:
\begin{itemize}
	\item The tissue pressure $u$ solves a Poisson equation on the bulk $\varOmega(t)$ with an inhomogeneous Robin boundary condition, where the inhomogeneity is a sum of the mean curvature on $\Ga(t)$ and a given source term. 
	\item The surface $\Ga(t)$ follows a forced mean curvature flow with the tissue pressure on the boundary as the forcing term. 
\end{itemize}
This model has non-trivial bulk--surface coupling in both the velocity law of the evolving surface and the boundary condition for the tissue pressure. A related model based on the Stokes flow, instead of Darcy's law which yields the Poisson equation, was proposed in \citep{KingVenkataraman_Stokes}. 

While the original model of \citep{EKS19} turned out intractable to us, we show that a modified tumour model is indeed tractable by numerical analysis (alas, not the tumour itself). The modification replaces the Robin boundary condition by a generalized Robin boundary condition that adds the surface Laplacian of $u$ as a regularizing term. 
This allows us to control  the $H^1(\Ga(t))$ norm of the error in $u$ in the numerical discretization, which is essential in the stability analysis.
%The mathematical rationale for adding this term will be explained later in this paper.

We propose and analyse an evolving finite element method for the modified bulk--surface problem, which differs from the method proposed in \citep{EKS19} in how the forced mean curvature flow is handled numerically. We use the numerical approach of \citep{MCF,MCF_soldriven}, which discretizes evolution equations for the outer normal and the mean curvature, and which 
%has been the first evolving finite element method  for mean curvature flow of closed surfaces that 
allows for an error analysis that yields convergence with optimal-order error bounds.

In Section~\ref{section:tumour growth model} we formulate the model equations and further equations that are to be solved numerically.
In Section~\ref{section:semi-discretization} we describe the evolving finite element semi-discretization and formulate the main result on optimal-order error bounds in the $H^1$ norm (Theorem~\ref{theorem:main}). This theorem is  proved in the course of Section~\ref{section:stability}  on stability (bounding errors in terms of defects) and Section~\ref{section:consistency} on consistency (bounding defects), using also auxiliary results from the appendix (Section~\ref{section:laplace}). In Subsection~\ref{subsec:why-not-EKS} we explain which term in the error equations prevents us from a stability analysis for the original Eyles--King--Styles model.
Numerical experiments in Section~\ref{section:numerics} illustrate the behaviour of the numerical method and the (tiny) effect of the regularization on the solution.

\section{The Eyles--King--Styles model of tumour growth}
\label{section:tumour growth model}

\subsection{Basic notions and notation}
\label{section:basic notations}
For times $t$ in an interval $[0,T]$, we consider a time-dependent closed surface $\Ga(t)$ in $\R^3$ that is the boundary of a bounded domain $\varOmega(t)\subset\R^3$. With $\Om^0=\Om(0)$ and $\Ga^0=\Ga(0)$, we assume that the closure
$\overline{\Omt} = \Omt \cup \Gat \subset \R^3$ is the image of a smooth map $X \colon \Om^0 \cup \Ga^0 \times [0,T] \to \R^3$ with the properties that $X(\cdot,0)$ is the identity map and, for each $t$, %$X(\cdot,t)$ maps $\Om^0$ to $\Omt$ and $\Ga^0$ to $\Gat$, more precisely 
$X(\cdot,t)$ is an embedding and in particular 
\[ \Omt = \{ X(q,t)\ : \ q \in \Om^0 \} \ , \qquad \Gat = \{ X(q,t) \ : \ q \in \Ga^0 \} \,. \]
In view of the subsequent numerical discretization, it is convenient to think of $X(q,t)\in\overline{\Omt}$ as the position at time $t$ of a particle labelled by its initial position $q \in \Om^0 \cup \Ga^0$, and of $\overline{\Omt}$ as a collection of such particles. Particles that are initially in the interior $\Om^0$ remain in the interior $\Om(t)$ and those on the boundary surface $\Ga^0$ remain on the boundary surface $\Gat$. In the following, we will refer to $\Omt$ as the bulk and to $\Gat$ as the boundary.
%, and to $\Omt \cup \Gat$ as the bulk--surface domain, or simply the domain. 

To indicate the dependence of the domain and its boundary on~$X$, we will write
\begin{equation*}
	\begin{aligned}
		& \Om(t) = \Om[X(\cdot,t)] \andquad \Ga(t) = \Ga[X(\cdot,t)] , 
		%	\\
		%	& \hbox{ or briefly}\quad \Om[X] \andquad \Ga[X]
	\end{aligned}	
\end{equation*}
or briefly $\Om[X]$ and $\Ga[X]$ when the time $t$ is clear from the context. The {\it velocity} $v(x,t)\in\R^3$ at a point $x=X(q,t) \in \Omt \cup \Gat$ equals
\begin{equation}
	\label{velocity}
	\partial_t X(q,t)= v(X(q,t),t).
\end{equation}
For a known velocity field  $v$, the position $X(q,t)$ at time $t$ of the particle with label $q$ is obtained by solving the ordinary differential equation \eqref{velocity} from $0$ to $t$ for a fixed $q$. We denote the {\it surface velocity} and {\it surface position}
$$
v_\Ga(\cdot,t)=v(\cdot,t)|_{\Gat} \quad\text{ and }\quad X_\Ga(\cdot,t)=X(\cdot,t)|_{\Ga^0}.
$$ 
% and will determine the bulk velocity $v(\cdot,t)$ on $\Omt$ by harmonic extension of $v_\Ga(\cdot,t)$:
%\begin{equation} \label{v-harm}
%\begin{aligned}
%  \Delta v &= 0  \qquad\text{in }\Omt
%  \\
%  v&= v_\Ga \quad\text{ on }\Gat
%\end{aligned}
%\end{equation}

For a function $u(x,t)$ ($x\in \Omt \cup \Gat$, $0\le t \le T$) we denote the {\it material derivative} (with respect to the parametrization $X$) as
\begin{align*}
	\mat u(x,t) &= \frac \d{\d t} \,u(X(q,t),t) = \nabla u (x,t) \cdot v(x,t) + \partial_t u(x,t) \qquad \hbox{ for } \ x=X(q,t).
%	\\
%	&= \nabla u (x,t) \cdot v(x,t) + \partial_t u(x,t).
\end{align*} 
%\bbk [kb: We should say something about the case $x \in \Ga\t$.] \ebk 

On any regular surface $\Ga\subset\R^3$, we denote by $\nbg u \colon \Ga \to \R^3$ the  {\it tangential gradient} of a function $u \colon \Ga\to\R$, and in the case of a vector-valued function $u=(u_1,u_2,u_3)^T \colon \Ga\to\R^3$, we let $\nbg u= ( \nbg u_1 , \nbg u_2 , \nbg u_3 )$. We thus use the convention that the gradient of $u$ has the gradient of the components as column vectors. We denote by $\nabla_{\Ga} \cdot f = \text{tr}(\nbg f)$ the {\it surface divergence} of a vector field $f$ on $\Ga$, cf.~\cite[equation~(2.7)]{DziukElliott_acta}, 
and by $\laplace_{\Ga} u=\nabla_{\Ga}\cdot \nabla_{\Ga}u$ the {\it Laplace--Beltrami operator} applied to $u$; see the review \citep{DeckelnickDE2005} or \cite[Appendix~A]{Ecker2012} or any textbook on differential geometry for these notions. 

We denote the unit outer normal vector field to $\Ga$ by $\n \colon \Ga\to\R^3$. Its surface gradient contains the (extrinsic) curvature data of the surface $\Ga$. At every $x\in\Ga$, the matrix of the extended Weingarten map,
$$
A(x)=\nabla_\Ga \n(x),
$$ 
is a symmetric $3\times 3$ matrix (see, e.g., \cite[Proposition~20]{Walker2015}). Apart from the eigenvalue $0$ with eigenvector $\n(x)$, its other two eigenvalues are
the principal curvatures $\kappa_1$ and $\kappa_2$ at the point $x$ on the surface. They determine the fundamental quantities
\begin{equation}
	\label{Def-H-A2}
	H:={\rm tr}(A)=\kappa_1+\kappa_2, \qquad 
	|A|^2 = \kappa_1^2 +\kappa_2^2 ,
\end{equation}
where $H$ is the {\it mean curvature} (as in most of the literature, defined here without a factor 1/2) and
$|A|$ is the Frobenius norm of the matrix $A$.

\subsection{Model equations and more equations}
\subsubsection{A `tractable' model for tumour growth}
In \citep{EKS19}, Eyles, King and Styles studied the  
following model for tumour growth, which determines the tissue pressure $u(x,t)$ on the evolving tumour domain $\Omt$ with the free boundary $\Gat$ by the following Robin boundary value problem ($\alpha$ here corresponds to $1/\alpha$ in \citep{EKS19}):
\begin{subequations}
	\label{robin}
	\begin{alignat}{3}
		\label{robin-pde}
		- \Delta u &= - 1 & \qquad & \text{in }\Omt \, , \\
		\label{robin-bc}
		\dnu{u} + \aalpha u   &= \beta H + Q & \qquad & \text{on }\Gat \, ,
	\end{alignat}
\end{subequations}
where $\partial_\nu$ is the outer normal derivative, $\alpha >0$ and $\beta>0$  are given constants, $H(x,t)$ is the mean curvature of the surface $\Gat$, and $Q(x,t)$ is a given surface source term. We assume throughout that $Q \colon \R^3\times \R \to \R$ is a smooth function.
The surface velocity is determined by the velocity law of a forced mean curvature flow:
\begin{align}
	v_\Ga &= V\nu , \quad\text{ with the normal velocity}\quad V= - \beta H + \aalpha u  \quad\text{on }\Gat \,.
	\label{v-Gamma}
\end{align}
Note that the above equations are strongly coupled through the appearance of $H$ as a source term in \eqref{robin-bc} and of $u$ as the forcing term in \eqref{v-Gamma}.

\subsubsection{Forced mean curvature flow}
The velocity law \eqref{v-Gamma} describes a forced mean curvature flow. In \cite[Lemma~2.1]{MCF_soldriven} it was shown that the mean curvature and surface normal from \eqref{v-Gamma} satisfy parabolic evolution equations along the flow, which read (note that here $\alpha u$ corresponds to $u$ of \citep{MCF_soldriven}):
\begin{subequations}
	\label{eq:forced mcf}
	\begin{alignat}{3}
		\label{eq:forced mcf - nu}
		\mat \nu &= \beta \lb \nu + \beta |A|^2 \nu - \aalpha \nbg u & \qquad & \text{on }\Gat \, , \\ 
		\label{eq:forced mcf-H}
		\mat H &= \beta \lb H + \beta |A|^2 H -  \aalpha \lb u - \aalpha\,|A|^2  u & \qquad & \text{on }\Gat \,  .
	\end{alignat}
\end{subequations}

In our numerical method, we will discretize the weak form of these equations by evolving finite elements and use the so obtained approximations to $H$ and $\nu$ in \eqref{v-Gamma} to compute an approximate surface velocity. The second-order term $ \lb u$ in \eqref{eq:forced mcf-H} requires special attention in the stability analysis. 
%In view of \eqref{v-Gamma}, we can reformulate
%the second equation of \eqref{eq:forced mcf - nu} as
%\begin{equation} \label{eq:forced mcf-V}
%\mat V = \beta \lb V + \beta |A|^2 V + \aalpha\, \mat u.
%\end{equation}
%While we will discretize the equation for $H$ in \eqref{eq:forced mcf - nu}, the latter formulation with $V$ will be favourable in the stability analysis.

%
%
%
%Recalling that $V = -\beta H + \aalpha \ga u$ denotes the normal velocity, these geometric evolution equations read: The surface normal $\nu$ satisfies
%\begin{align}
%\mat \nu &= - \nbg V = \nbg \left( \beta H - \frac{u}{\alpha} \right) \\
%&= \beta \nbg H - \aalpha \nbg u \\
%&=\beta \lb \nu + \beta |A|^2 \nu - \aalpha \nbg u \,.
%\end{align}
%Here we have used that $\nbg H = \lb \nu + |A|^2 \nu$ where $A=\nbg \nu$ and $|A|$ denotes the Frobenius norm of $A$. 
%
%Similarly, the mean curvature satisfies
%\begin{align}
%\mat H &= - \lb V - |A|^2 V \\
%&= \beta \lb H + \beta |A|^2 H -  \aalpha \lb u - \frac{|A|^2}{\alpha}  u \,.
%\end{align}

\subsubsection{Harmonic extension of the surface velocity into the bulk}
%%In order to develop a finite element scheme later on, we extend the velocity $v_\Ga$ harmonically to the interior of the domain, i.e. by solving the Laplace equation.
For the finite element method, a moving mesh is required not only on the boundary $\Ga(t)$ but also on $\Om(t)$; cf.~\cite[Section~6.1.2]{EKS19}. This is achieved by extending the velocity $v_\Ga$, which is given by the forced mean curvature flow, harmonically into $\varOmega(t)$, i.e., we have the equation
\begin{align}
	\begin{aligned} -\laplace v &= 0 \quad&&\text{in }\Omt\,,\\
		v &= v_\Ga \quad&&\text{on }\Gat\,.
	\end{aligned}
\end{align}
With this velocity, the positions $X(q,t)$ are obtained by solving the ordinary differential equation \eqref{velocity}. The equations given so far fully describe the coupled bulk--surface model of \citep{EKS19}.

\subsubsection{Regularization}
\label{section:regularization}
In view of the term $\Delta_\Ga u$ in the evolution equation \eqref{eq:forced mcf - nu} for the forced mean curvature flow, it is crucial that the trace of $u$ on $\Ga$, denoted $\tr u$,  be in $H^1(\Ga)$ with controlled norm in the exact and the numerical solution, and in the error. We cannot ensure this for the original Eyles--King--Styles model. We therefore add a regularization term in the boundary condition \eqref{robin-bc},
\begin{align}
	\label{robin-bc-gen}
	\dnu u  - \clb \lb u +\aalpha u = Q+\beta H \qquad \text{on } \Gat , \tag{2.3b'}
\end{align}
with a positive constant $\clb > 0$. Such a  boundary condition is sometimes referred to as a \emph{generalized Robin boundary condition}. It is shown in \cite[Section~3.2]{KashiwabaraCDQ} that the solution $u$ has improved regularity on the boundary. We will show that here we can control the $H^1(\Ga)$ norm of the numerical error in $u$ whenever the bulk inhomogeneity in the error equation is suitably bounded in the dual space of $H^1(\Om)$ but not necessarily in $L^2(\Om)$.
% but not necessarily in $L^2(\Om)$. 
%(For an $L^2$-bounded  inhomogeneity, the Robin boundary condition would suffice.)

\subsubsection{Collected equations used for the discretization}
Altogether, we consider the following strong formulation of the modified model problem, which consists of four coupled groups of equations:
%%%%%%%%%%%%%%%%%%%%%%
% some local commands to achieve proper aligment of equations and brackets
\newcommand{\hpllap}[1]{\hphantom{- \clb \lb u + \frac{u}{\alpha}}\llap{$#1$}}
% should include the longest expression on the left-hand side of the equality sign
\newcommand{\hprlap}[1]{\rlap{$#1$}\hphantom{\beta \lb H + \beta |A|^2 H - \aalpha \lb u - \frac{|A|^2}{\alpha} u}}
% should include the longest expression on the right-hand side of the equality sign
%%%%%%%%%%%%%%%%%%%%%%
\begin{align}
	% Robin problem
	\label{robin-bvp} 
	&\begin{rcases}
		\hpllap{- \Delta u} &= \hprlap{- 1} \quad &&\text{in }\Omt ,\\
		\hpllap{\dnu{u}} & \hprlap{- \clb \lb u +\aalpha u  =\beta H + Q}  &&\text{on }\Gat ; \quad
	\end{rcases} 
	\\[3mm]
	% forced mean curvature flow
	\label{forced-mcf}
	&\begin{rcases}
		\hpllap{\mat \nu} &= \hprlap{\beta \lb \nu + \beta |A|^2 \nu - \aalpha \nbg u} \quad\!\!\! &&\text{on }\Gat ,\\[1mm]
		\hpllap{\mat H} &= \beta \lb H + \beta |A|^2 H - \aalpha \lb u - \aalpha |A|^2 u \quad\!\!\! &&\text{on }\Gat ,
		\\[1mm]
		\hpllap{v_\Ga} &=  V\nu \quad\text{with }V=- \beta H + \aalpha u &&\text{on }\Gat ; \quad
	\end{rcases}
	\\[3mm]
	% harmonic extension for the velocity
	\label{harmonic-v}
	&\begin{rcases}
		\hpllap{-\laplace v} &= \hprlap{0} \quad &&\text{in }\Omt ,\\
		\hpllap{v} &= \hprlap{v_\Ga} \quad &&\text{on }\Gat ; \quad\\
	\end{rcases}
	\\[3mm]
	% ODE for the positions
	\label{ODE}
	&\begin{aligned}
		\hpllap{\pa_t X} &= \hprlap{v\circ X } \quad\!\!\! &&\text{on }\Om^0 \cup \Ga^0 .
	\end{aligned}
\end{align}

In the following weak formulation and matrix--vector formulation of the discretization and in the error analysis, the corresponding equations are clearly separated into these four parts. We refer to \eqref{robin-bvp} as the generalized Robin boundary value problem, to \eqref{forced-mcf} as the forced mean curvature flow, to \eqref{harmonic-v} as the harmonic velocity extension, and to \eqref{ODE} as the ODE for the positions.

\subsection{Weak formulation}
\label{section:weak formulation}

\subsubsection{Generalized Robin boundary value problem}
In this subsection, we briefly write $\Om$ and $\Ga$ and omit the argument $X$ or $t$.
The weak formulation of the Robin boundary value problem \eqref{robin} is to find $u\in H^1(\Om)$ such that (with $\gamma$ denoting the trace operator):
\begin{equation}\label{robin-weak}
	\begin{aligned}
		& \int_{\Om} \nb u \cdot \nb \varphi^u   + \clb \int_{\Ga} \nbg (\tr u) \cdot \nbg (\tr \varphi^u) 
		+ \aalpha \int_{\Ga} (\tr u) (\tr \varphi^u)  
%		 \\ &
		= -\int_{\Om} \varphi^u + \int_{\Ga} (\beta H+Q) (\tr \varphi^u) 
	\end{aligned}
\end{equation}
for all $\varphi^u \in H^1(\Om)$.

%Elliptic regularity theory (e.g. \citep{Taylor}) shows that $u$ is not only in $H^1(\Om)$ but in the more regular subspace
%\[ H^1(\Om;\Ga) = \left\{ u \in H^1(\Om): \, \tr u \in H^1(\Ga) \right\} \,,\]
%and 
%$\| \gamma u \|_{H^1(\Ga)} \le C_\Ga \| g \|_{L^2(\Ga)}$ with $g=\beta H+Q$,
%where $C_\Ga$ depends only on a Lipschitz constant of the  boundary (and the parameter $\alpha$).
%
\ecl

\subsubsection{Forced mean curvature flow}
The weak formulation of the parabolic equations for $\nu$ and $H$ in \eqref{forced-mcf} reads:
\begin{equation}
	\label{forced-mcf-weak}
	\begin{aligned}
		%		\int_{\Ga} v_\Ga \varphi^{v_\Ga} + \int_{\Ga} \nbg v_\Ga \cdot \nbg \varphi^{v_\Ga} &= \int_{\Ga} (V \nu) \varphi^v + \int_\Ga \nbg (V \nu) \cdot \nbg \varphi^v \,, \\
		\int_{\Ga} \mat \nu \,\varphi^\nu + \beta \int_{\Ga} \nbg \nu \cdot \nbg \varphi^\nu = &\ \beta \int_{\Ga} |A|^2 \nu \cdot \varphi^\nu - \aalpha \int_{\Ga} \nbg (\tr u) \cdot \varphi^\nu \,, \\
		\int_{\Ga} \mat H \varphi^H + \beta \int_{\Ga} \nbg H \cdot \nbg \varphi^H = &\ - \int_{\Ga} |A|^2 V \varphi^H + \aalpha \int_{\Ga} \nbg (\tr u) \cdot \nbg \varphi^H \, ,
	\end{aligned}
\end{equation}
for all test functions $\vphi^\nu \in H^1(\Ga[X])^3$ and $\vphi^H \in H^1(\Ga[X])$. 
Recall that the system is coupled to the velocity law \eqref{v-Gamma} and the ordinary differential equation \eqref{velocity} that determines the boundary surface $\Ga[X(\cdot,t)] = \pa\Om[X(\cdot,t)]$.

\subsubsection{Harmonic velocity extension}

After extending $v_\Ga\in H^{1/2}(\Ga)$ to an (arbitrary) function $w \in H^1(\Om)$ with $\gamma w=v_\Ga$,
the weak formulation of the harmonic extension \eqref{harmonic-v} (which is an inhomogeneous Dirichlet problem) 
reads:  Find $v \in H^1(\Om)^3$ with $ v-w \in H^1_0(\Om)^3$ that satisfies
\begin{equation}
	\label{harmonic-v-weak}
	\int_{\Omt} \nb  v \cdot \nb \varphi^v = 0 ,
\end{equation}
for all $\varphi^v \in H_0^1(\Om)^3$. 
%We note that $\| v \|_{H^1(\Om)} \le c \| w \|_{H^1(\Om)} \le C \| v_\Ga \|_{H^{1/2}(\Ga)}$.
Recall that the equation for $v$ is coupled to the ordinary differential equation \eqref{velocity} that determines the domain $\Om[X(\cdot,t)]$.

\section{Spatial semi-discretization with bulk--surface finite elements}
\label{section:semi-discretization}

%\bbk [kb: Some notation propositions:]
%\begin{itemize}
%	\item Let $\Om$ and $\Ga$ always go to a lower index. Now it is up and down (see, $\bfM_{\Ga}$ but $\bfv^\Ga$ and $\bfe_{\bfv^\Ga}$), it also fits better with $\bfv_\Ga^*$, etc.
%	\item How shall we denote $v_{\Ga_h}$, $v_{\Ga,h}$, or other?
%	\item \redon DE: Change notation to $q_j = p_j$ (or $x_j^0 = p_j$). \redoff 
%\end{itemize}
%\ebk 

We formulate the evolving bulk--surface finite element discretization for the coupled tumour growth system, following the descriptions in \citep{ElliottRanner_bulksurface} and \citep{MCF}, as well as details from \citep{Dziuk88} and \citep{Demlow2009}. We use tetrahedral and triangular finite elements in the bulk and on the surface, respectively, and continuous piecewise polynomial basis functions of degree~$k$, which are compatible on the boundary. For details on higher-order finite elements we refer to \cite[Section~5]{ElliottRanner_bulksurface}, \cite[Section~2.5]{Demlow2009}, \citep{Edelmann_harmonicvelo,Edelmann_isoparametric},  \citep{highorderESFEM}, and \citep{ElliottRanner_unified}.

\subsection{Evolving bulk--surface finite elements}
\label{section:evolving bulk--surface FEM}

The initial bulk--surface domain $\Om^0 \cup \Ga^0$ is discretized by a tetrahedral mesh of degree $k$, denoted by $\Om_h^0$. The meshes are assumed to form an admissible family of triangulations $\mathcal{T}_h$ of decreasing maximal element diameter $h$; see \citep{DziukElliott_ESFEM,ElliottRanner_bulksurface} for the notion of an admissible bulk--surface approximation, which includes quasi-uniformity and shape regularity. 
By piecewise polynomial interpolation of degree $k$, the nodal vector defines an approximate bulk--surface domain $\Om_h^0$ that interpolates $\Om^0$ in the nodes $q_j$.
By construction, the boundary of the tetrahedral mesh $\Om_h^0$ forms an equally admissible triangular approximation $\Ga^0_h= \partial \Om_h^0$ of the initial boundary $\Ga^0$.

The nodes $q_j$ of the triangulation are collected in the vector
\begin{equation*}
	\bfx^0 = \begin{pmatrix} \bfx_\Ga^0 \\ \bfx_\Om^0 \end{pmatrix} \in \bigl(\R^3\bigr)^\dof ,
\end{equation*}
where we assume that the boundary nodes $\bfx_\Ga^0 = (q_j)_{j=1}^{\dof_\Ga}$ lie on the boundary $\Ga^0$, while the remaining ($N_\Om = N - N_\Ga$) nodes $\bfx_\Om^0$ are in the interior of $\Om^0$.

The nodes $\bfx^0$ will evolve in time and at time $t$ they are collected in the vector 
\begin{equation*}
	\bfx\t = \begin{pmatrix} \bfx_\Ga\t \\ \bfx_\Om\t \end{pmatrix} \in \bigl(\R^3\bigr)^\dof ,
\end{equation*}
which employs the same partitioning as before. We will often suppress the omnipresent argument $t$.

By piecewise polynomial interpolation on the plane reference tetrahedron that
corresponds to every curved tetrahedron of the triangulation, the nodal vector $\bfx$
defines a domain $\Om_h[\bfx]$ and its boundary surface  $\Ga_h[\bfx]$, which actually depends only on $\bfx_\Ga$.

We then define globally continuous finite element {\it basis functions} 
\begin{alignat*}{3}
	&\ \varphi_i[\bfx] \colon \Om_h[\bfx]\to\R, & \qquad & i = 1, \dotsc, \dof , \\
	&\ \psi_i[\bfx] \colon \Ga_h[\bfx]\to\R, & \qquad & i = 1, \dotsc, \dof_\Ga , 
\end{alignat*}
which have the property that on every tetrahedron and triangle, respectively, their pullback to the reference element are polynomials of degree $k$, and which satisfy at the node $x_j$
\begin{align*}
	&\ \varphi_i[\bfx](x_j) = \delta_{ij} \quad  \text{ for all } i,j = 1,  \dotsc, \dof , \\
	&\ \psi_i[\bfx](x_j) = \delta_{ij} \quad  \text{ for all } i = 1,  \dotsc, \dof_\Ga , \  j = 1,  \dotsc, \dof.
\end{align*}
Since we have $\Ga_h\t = \partial \Om_h\t$, by construction the basis functions satisfy the crucial property that $\psi_j[\bfx]$ is the discrete trace of $\varphi_j[\bfx]$, i.e.
\begin{equation}
	\label{eq:trace of basis functions}
	\psi_j[\bfx] = \trh \varphi_j[\bfx].
\end{equation}

These functions span the bulk and surface finite element spaces in $\Om_h[\bfx]$ and on $\Ga_h[\bfx]$, respectively:
\begin{align}
	\calV_h[\bfx] &= \mathrm{span} \{ \varphi_1[\bfx],\ldots,\varphi_\dof[\bfx] \} \,,\\
	\calS_h[\bfx] &= \mathrm{span} \{ \psi_1[\bfx],\ldots,\psi_{\dof_\Ga}[\bfx] \} \,.
\end{align}
For a finite element function $u_h\in \calS_h[\bfx]$, the tangential gradient $\nabla_{\Ga_h[\bfx]}u_h$ is defined piecewise on each element. Note that $\calV_h[\bfx] \subset H^1(\Omh[\bfx];\Gah[\bfx])$ and $\calS_h[\bfx] \subset H^1(\Gah[\bfx])$. 
We denote the space of discrete functions with vanishing trace by
\begin{equation}
	\label{eq:FEM space with zero trace}
	\calV_h^0[\bfx] = \{ v \in \calV_h[\bfx]\,:\, \gamma_h v=0\} =
	\mathrm{span} \{ \varphi_{N_\Ga+1}[\bfx],\ldots,\varphi_\dof[\bfx] \} .
\end{equation}

%\bbk [kb: The notation $\calV_h[\bfx]$ is in a slight conflict with the discrete normal velocity $V_h$.] \ebk 

The discrete domain at time $t$ is parametrized by the initial discrete domain via the map 
$X_h(\cdot,t) \colon \overline\Om_h^0\to\overline \Om_h[\bfx(t)]$ defined by
\begin{equation}
	\label{Xh}
	X_h(q_h,t) = \sum_{j=1}^\dof x_j(t) \, \vphi_j[\bfx(0)](q_h), \qquad q_h \in \Om_h^0,
\end{equation}
which has the properties that $X_h(q_j,t)=x_j(t)$ for $j=1,\dots,\dof$ and  $X_h(q_h,0) = q_h$ for all $q_h\in\Om_h^0$. We then have
$$
\Om_h[\bfx(t)]=\Om[X_h(\cdot,t)] \andquad \Ga_h[\bfx(t)]=\Ga[X_h(\cdot,t)],
$$
where the right-hand sides equal the domain $\{ X_h(q_h,t) : q_h \in \Om_h^0 \}$ and surface $\{ X_h(q_h,t) : q_h \in \Ga_h^0 \}$, respectively, as in Section~\ref{section:basic notations}.

The {\it discrete velocity} $v_h(x,t)\in\R^3$ at a point $x=X_h(q_h,t) \in \Om[X_h(\cdot,t)]$ is given by
$$
\partial_t X_h(q_h,t) = v_h(X_h(q_h,t),t).
$$
In view of the transport property of the basis functions \citep{DziukElliott_ESFEM},
\begin{equation}
	\begin{aligned}
		\frac\d{\d t} \Bigl( \vphi_j[\bfx(t)](X_h(q_h,t)) \Bigr) =0  \for q_h \in \Om_h^0 ,\\
		\frac\d{\d t} \Bigl( \psi_j[\bfx(t)](X_h(q_h,t)) \Bigr) =0  \for q_h \in \Ga_h^0 , \\
	\end{aligned}	
\end{equation}
%we obtain by integration from $0$ to $t$ 
%$$
%	\vphi_j[\bfx(t)](X_h(q_h,t)) = \vphi_j[\bfx(0)](q_h) \!\!\! \andquad \!\!\! \psi_j[\bfx(t)](X_h(q_h,t)) = \psi_j[\bfx(0)](q_h) .
%$$
%The 
the discrete velocity equals, for $x \in \Om_h[\bfx(t)]$,
$$
v_h(x,t) = \sum_{j=1}^\dof v_j(t) \, \vphi_j[\bfx(t)](x) \qquad \hbox{with } \ v_j(t)=\dot x_j(t),
$$
where the dot denotes the time derivative $\d/\d t$. 
Hence, the discrete velocity $v_h(\cdot,t)$ is in the bulk finite element space $\calV_h[\bfx(t)]$, with nodal vector $\bfv(t)=\dot\bfx(t)$, while its discrete trace $v_{\Ga_h}(\cdot,t) = \ga_h v_h(\cdot,t)$ is in the surface finite element space $\calS_h[\bfx\t]$, with nodal values $\bfv_\Ga = (v_j\t)_{j=1}^{\dof_\Ga} = (\dot x_j\t)_{j=1}^{\dof_\Ga} = \dot\bfx_\Ga\t$.

The {\it discrete material derivative} of a finite element function $u_h(\cdot,t) \in \calV_h[\bfx\t]$, with nodal values $(u_j(t))_{j=1}^N$, at $x=X_h(q_h,t) \in \Om_h[\bfx\t]$ is
\begin{equation}
	\mat_h u_h(x,t) = \frac{\d}{\d t} u_h(X_h(q_h,t)) = \sum_{j=1}^\dof \dot u_j(t)  \vphi_j[\bfx(t)](x) ,
\end{equation}
and similarly, for $w_h(\cdot,t) \in \calS_h[\bfx\t]$, at $x=X_h(q_h,t) \in \Ga_h[\bfx\t]$ is
\begin{equation}
	\mat_h w_h(x,t) = \frac{\d}{\d t} w_h(X_h(q_h,t)) = \sum_{j=1}^{\dof_\Ga} \dot w_j(t)  \psi_j[\bfx(t)](x) .
\end{equation}
Since $\psi_j = \ga_h \vphi_j$, for $j=1,\dotsc,\dof_\Ga$, this directly implies that we have $\mat_h ( \ga_h u_h ) = \ga_h \big( \mat_h u_h \big)$.

\subsubsection{Lifts}

We now introduce a lift operator for bulk--surface functions, following \citep{Dziuk88,ElliottRanner_bulksurface,ElliottRanner_unified}.

Following \citep{Dziuk88}, we define the \emph{lift} of functions $w_h \colon \Ga_h\to \R$ as 
\begin{equation}
	\label{eq:lift definition}
	w_h^\ell \colon \Ga \to \R \qquad \text{with} \qquad w_h^\ell(p) = w_h(x), \quad \forall p \in \Ga ,
\end{equation}
where $x\in\Ga_h$ is the \emph{unique} point on $\Ga_h$ with $x-p$  orthogonal to the tangent space $T_p\Ga$.
We further consider the \emph{lift} of functions $w_h \colon \Om_h\to \R$ to $w_h^\ell \colon \Om\to\R$ by setting $w_h^\ell(p)=w_h(x)$  if $x\in\varOmega_h$ and $p\in \varOmega$ are related as described in detail in \cite[Section~4]{ElliottRanner_bulksurface}. 
The mapping $G_h \colon \Om_h \to \Om$ is defined piecewise, for an element $E \in \calT_h$, by
\begin{equation}
	\label{eq:bulk mapping}
	G_h|_E (x) = F_e\big((F_E)^{-1}(x)\big), \qquad \text{for } x \in E,
\end{equation}
where $F_e$ is a $C^1$ map (see \cite[equation~(4.2) \& (4.4)]{ElliottRanner_bulksurface}) from the reference element onto the smooth element $e \subset \Om$, and $F_E$ is the standard affine linear map between the reference element and $E$, see, e.g.~\cite[equation~(4.1)]{ElliottRanner_bulksurface}. 
The \emph{inverse lift} $w^{-\ell} \colon \Ga_h \to \R$ denotes a function whose lift is $w\colon \Ga \to \R$, and similarly for the bulk as well. 
Note that both definitions of the lift coincide on $\Ga$. Finally, the lifted finite element space is denoted by $\calS_h^\ell$, and is given as $\calS_h^\ell = \{  w_h^\ell \mid w_h \in \calS_h \}$.

Then the \emph{composed lift operator} $\,^L$, as used in \citep{MCF}, lifts finite element functions $w_h = \sum_{j = 1}^N w_j \phi_j[\bfx]$ on the discrete surface $\Ga_h[\bfx]$ to functions on the exact surface $\Ga[X]$ via the  finite element function $\widehat w_h = \sum_{j = 1}^N w_j \phi_j[\xs]$ on the interpolated surface $\Ga_h[\xs]$, by setting
\begin{equation}
	\label{Lift}
	w_h^L = (\widehat w_h)^\ell.
\end{equation}
We will compare the positions of the exact surface $\Ga[X(\cdot,t)]$ and the discrete surface $\Ga_h[\bfx(t)]$ by comparing any point $x\in \Ga[X(\cdot,t)]$ with its associated point $x_h^L(x,t) \in \Ga_h[\bfx(t)]$, which is defined as
\begin{equation}
	\label{xhL}
	\begin{aligned}
		x_h^L(x,t) =  X_h^L(q,t) \in \Ga_h[\bfx(t)] \qquad\  &\text{ for }  q \in \Ga^0 
		\text{ such that } \quad
%		\\  &
		X(q,t)=x\in\Ga[X(\cdot,t)] .
	\end{aligned}
\end{equation}
Here, the discrete flow map  $X_h(\cdot,t) \colon \Ga_h^0\to \R^3$ is defined in \eqref{Xh} and we denote its composed lift by $X_h^L(\cdot,t)=\bigl(X_h(\cdot,t)\bigr)^L \colon \Ga^0 \to \R^3$.

\subsection{Semi-discretization of the coupled bulk--surface system}
\label{subsec:semi-disc-bs}

\subsubsection{Semi-discretization of the generalized Robin boundary value problem}
The semi-discretization of the Robin boundary value problem \eqref{robin} computes $u_h\in \calV_h[\bfx]$ such that (with $\gamma_h$ denoting the discrete trace operator)
\begin{equation}\label{robin-h}
	\begin{aligned}
		& \int_{\Om_h[\bfx]} \nb u_h \cdot \nb \varphi_h^u 
		+ \clb \int_{\Gahx} \nbgh (\trh u_h) \cdot \nbgh (\trh \varphi_h^u) 
		+ \aalpha \int_{\Gahx} (\trh u_h) (\trh \varphi_h^u) 
		\\
		&= - \int_{\Omhx} \varphi_h^u + \int_{\Gahx}(\beta H_h+Q_h)(\trh \varphi_h^u)
	\end{aligned}
\end{equation}
for all $\varphi_h^u \in \calV_h[\bfx]$. Here $Q_h$ is the finite element interpolation of $Q$ on $\Gahx$.

\subsubsection{Semi-discretization of the forced mean curvature flow}
A finite element semi-discretization of the parabolic equations \eqref{forced-mcf-weak} for $\nu$ and $H$ reads as follows: Find $\nu_h(\cdot,t) \in \calS_h[\bfx\t]^3$ and $H_h(\cdot,t) \in \calS_h[\bfx\t]$ that
satisfy, with $u_h$ from \eqref{robin-h} and with $A_h = (\nbgh \nu_h + (\nbgh \nu_h)^T ) / 2$ and $V_h = - \beta H_h + \aalpha \ga_h u_h $, 
\begin{equation}
	\label{mcf-h}
	\begin{aligned}
		%v_{\Ga_h} = &\ \Ih \big( V_h \nu_h \big) , \\
		\int_{\Ga_h[\bfx]} \!\!\! \mat_h \nu_h \varphi_h^\nu + \beta \int_{\Ga_h[\bfx]} \!\!\!\! \nbgh \nu_h \cdot \nbgh \varphi_h^\nu = &\ \beta \int_{\Ga_h[\bfx]} \!\!\! |A_h|^2 \nu_h \cdot \varphi_h^\nu 
		- \aalpha \int_{\Ga_h[\bfx]} \!\!\!\! \nbgh u_h \cdot \varphi_h^\nu , \\
		\int_{\Ga_h[\bfx]} \!\!\! \mat_h H_h \varphi_h^H + \beta \int_{\Ga_h[\bfx]} \!\!\!\! \nbgh H_h \cdot \nbgh \varphi_h^H = &\ - \int_{\Ga_h[\bfx]} \!\!\! |A_h|^2 V_h \varphi_h^H 
		+ \aalpha \int_{\Ga_h[\bfx]} \!\!\!\! \nbgh u_h \cdot \nbgh \varphi_h^H ,
	\end{aligned}
\end{equation}
for all test functions $\vphi_h^\nu \in \calS_h[\bfx]^3$ and $\vphi_h^H \in \calS_h[\bfx]$. 

The discrete surface velocity $v_{\Ga_h}(\cdot,t) \in \calS_h[\bfx\t]^3$ with nodal vector $\bfv_\Ga$ is determined by 
\begin{equation}\label{v-Gamma-h}
	v_{\Ga_h} = \Ih \big( V_h \nu_h \big) , \qquad\text{where } \quad  V_h = - \beta H_h + \aalpha \ga_h u_h  \in \calS_h[\bfx]
\end{equation}
and $\Ih \colon C(\Ga_h[\bfx]) \to \calS_h[\bfx]$ denotes the finite element interpolation operator on $\Ga_h[\bfx]$.
The nodal vector $\bfx_\Ga\t$ that determines the discrete surface $\Ga_h[\bfx\t]$ is obtained by integrating
\begin{equation}
	\dot \bfx_\Ga = \bfv_\Ga, \qquad\  \bfx_\Ga(0)=\bfx_\Ga^0.
\end{equation}

\subsubsection{Discrete harmonic velocity extension and ODE for positions}
%The 
%discrete surface velocity 
%$v_{\Ga_h}(\cdot,t)$ of \eqref{v-Gamma-h}
%is extended to an arbitrary function $ w_h \in \calV_h[\bfx(t)]^3$ such that $\trh w_h = v_{\Ga_h}$. Denoting by
%\[ \calV_{h,0}[\bfx] = \{ \varphi_h \in \calV_h[\bfx]: \trh \varphi_h = 0 \} = \mathrm{span}\{ \varphi_{\dof_\Ga+1}[\bfx],\ldots,\varphi_\dof[\bfx] \} \]
%the problem is to find 
We compute $v_h \in \calV_h[\bfx]^3$ with $\gamma_h v_h = v_{\Ga_h}$ such that 
\begin{align}
	\label{harmonic-v-h}
	\int_{\Omhx} \nb v_h \cdot \nb \varphi_h^v = 0
\end{align}
for all $\varphi_h^v \in \calV_{h}^0[\bfx]^3$. 
From the nodal vector $\bfv=(\bfv_\Ga;\bfv_\Om)$ of the finite element function $v_h$, the nodal vector $\bfx\t$ that determines the discrete domain $\Om_h[\bfx\t]$ is obtained by integrating
\begin{equation}
	\dot \bfx_\Om = \bfv_\Om, \qquad\  \bfx_\Om(0)=\bfx_\Om^0.
\end{equation}

%the problem is to find $\widetilde{v}_h \in V_{h,0}[\bfx]^3$ such that 
%\begin{align}
%\label{harmonic-v-h}
%	\int_{\Omhx} \nb \widetilde{v}_h \cdot \nb \varphi_h^v = - \int_{\Omhx} \nb w_h \cdot \nb \varphi_h^v
%\end{align}
%for all $\varphi_h^v \in V_{h,0}[\bfx]^3$. The discrete harmonic extension $v_h \in \calV_h[\bfx]^3$ is then given by $v_h = \widetilde{v}_h+w_h$.

\subsection{Matrix--vector formulation}
\label{section:matrix-vector formulation}

\subsubsection{Mass and stiffness matrices}
%Assuming that the nodes of the bulk--surface triangulation are numbered such that the surface nodes are taken first, we partition the nodal vectors $\bfx$ and $\bfv$ as
%\begin{equation*}
%	\bfx = (\bfx_\Ga; \bfx_\Om) \in \R^{3\dof_\Ga + 3\dof_\Om} 
%	\andquad 
%	\bfv = (\bfv_\Ga; \bfv_\Om) \in \R^{3\dof_\Ga + 3\dof_\Om} .
%\end{equation*}
%with $\bfx_\Ga = \bftr^{[3]} \bfx \in \R^{3\dof_\Ga}$ and $\bfv_\Ga = \bftr^{[3]} \bfv \in \R^{3\dof_\Ga}$.
We collect the nodal values of the semi-discrete approximations to tissue pressure $u_h(\cdot,t)$, velocity $v_h(\cdot,t)$, normal vector $\nu_h(\cdot,t)$, mean curvature $H_h(\cdot,t)$ in the column vectors 
$$
\bfu = (u_j) \in \R^{\dof},\qquad \bfv = (v_j) \in \R^{3\dof},\qquad \bfn = (\nu_j) \in \R^{3\dof_\Ga},\qquad \bfH = (H_j) \in \R^{\dof_\Ga}, 
$$ 
respectively. As we did with the position vector $\bfx$, we partition
$$
\bfu=\begin{pmatrix}\bfu_\Ga \\ \bfu_\varOmega \end{pmatrix}, \quad   \bfv=\begin{pmatrix}\bfv_\Ga \\ \bfv_\varOmega \end{pmatrix},
$$
corresponding to the nodes on the surface and in the bulk.
Furthermore, the nodal values of the semi-discrete normal velocity $V_h = - \beta H_h + \alpha \ga_h u_h  \in \calS_h[\bfx]$ are collected in the vector $\bfV = (V_j) \in \R^{\dof_\Ga}$:
$$
\bfV = -\beta \bfH + \aalpha \bfu_\Ga  .
$$
We denote the domain-dependent bulk stiffness and mass matrices by $\bfA_{\bar\Om}(\bfx) \in \R^{\dof \times \dof}$ and $\bfM_{\bar\Om}(\bfx) \in \R^{\dof \times \dof}$, respectively, and define them as
\begin{equation}
	\begin{aligned}
		\bfM_{\bar\Om}(\bfx)|_{ij} &= \int_{\Omhx}  \! \vphi_i[\bfx] \vphi_j[\bfx]  ,\\
		\bfA_{\bar\Om}(\bfx)|_{ij} &= \int_{\Omhx}  \!\! \nb \vphi_i[\bfx] \cdot \nb \vphi_j[\bfx]  ,
	\end{aligned}
	\quad\  i, j = 1,\ldots,\dof .
\end{equation}
We partition these matrices as
\begin{equation} \label{A-partition}
	\bfA_{\bar\Om}(\bfx) = \begin{pmatrix} \bfA_{\Ga\Ga}(\bfx) & \bfA_{\Ga\Om}(\bfx)
		\\
		\bfA_{\Om\Ga}(\bfx) & \bfA_{\Om\Om}(\bfx)
	\end{pmatrix}
\end{equation}
and similarly for $\bfM_{\bar\Om}(\bfx)$.

We denote the surface mass and stiffness matrices, (recall $\psi_j = \ga_h \vphi_j$, for $j=1,\dotsc,\dof_\Ga$), by
\begin{equation}
	\begin{aligned}
		\bfM_\Ga(\bfx)|_{ij} &= \int_{\Gahx}  \!\! \psi_i[\bfx] \, \psi_j[\bfx]  ,\\
		\bfA_\Ga(\bfx)|_{ij} &= \int_{\Gahx}  \!\!\!\! \nbghx \psi_i[\bfx] \cdot \nbghx \psi_j[\bfx]  ,
	\end{aligned}
	\quad i, j = 1, \ldots, \dof_\Ga.
\end{equation}
%Both in the bulk and the surface case the matrix corresponding to the $H^1$ inner product is denoted by $\bfK = \bfA + \bfM$.

The discrete trace matrix, corresponding to the discrete trace operator $\ga_h$, is given by
\[ 
\bftr = \begin{pmatrix} I_{\dof_\Ga} & 0 \end{pmatrix} \in \R^{\dof_\Ga \times \dof}, 
\]
where $I_{\dof_\Ga}$ denotes the $\dof_\Ga \times \dof_\Ga$ identity matrix. 
For any nodal vector $\bfu \in \R^\dof$ corresponding to a finite element function $u_h \in \calV_h[\bfx]$, the vector $\bftr \bfu =\bfu_\Ga \in \R^{\dof_\Ga}$ is the nodal vector of the finite element function $\trh u_h \in \calS_h[\bfx]$, cf.~\eqref{eq:trace of basis functions}.

Moreover, for a matrix and for an arbitrary dimension $d$, we use the notation
\begin{align}
	%	\bfM\textbf{_{\bar\Om}}^{[d]}(\bfx) &= I_d \otimes \bfM_{\bar\Om}(\bfx) , \qquad
	%	\bfA_{\bar\Om}^{[d]}(\bfx) = I_d \otimes \bfA_{\bar\Om}(\bfx) , 
	%	\\
	\bfM_{\Ga}^{[d]}(\bfx) = I_d \otimes \bfM_{\Ga}(\bfx) , \qquad 
	\bfA_{\Ga}^{[d]}(\bfx) = I_d \otimes \bfA_{\Ga}(\bfx) ,
\end{align}
where $I_d \in \R^{d \times d}$ denotes the identity matrix, and $\otimes$ denotes the Kronecker product of matrices. If the dimension $d$ is clear from the context, we will drop the superscript $^{[d]}$.

Finally, we define the tangential gradient matrix $\bfD(\bfx) \in \R^{3N_\Ga \times N_\Ga}$
%$$
%\bfD(\bfx)|_{ij} = \int_{\Gahx}  \!\!\!\!  \psi_i[\bfx] \cdot \nbghx \varphi_j[\bfx] ,
%$$
%for $i=1,\dots,3N_\Ga$ and $j=1,\dots,N$. 
\[
\bfD(\bfx)|_{i+(\ell-1)N_\Ga,j} = \int_{\Gahx} \psi_i[\bfx] ( \nbghx \psi_j[x] )_\ell 
\]
for $i,j = 1,\ldots,N_\Ga$, $\ell= 0, 1, 2$.
Alternatively, this could be written as
\[ \bfD(\bfx) = \begin{pmatrix} \bfD_1(\bfx) \\ \bfD_2(\bfx) \\ \bfD_3(\bfx) \end{pmatrix}  \quad\text{ with }\quad
\bfD_\ell(\bfx)|_{ij} = \int_{\Gah} \psi_i \underline{D}_{h,\ell} \psi_j ,
\]
where $\underline{D}_{h,\ell} \psi_j$ denotes the $\ell$-th component of the discrete tangential gradient of $\psi_j$.

\subsubsection{Matrix--vector formulation of the generalized Robin boundary value problem}
We define the vector $\bffu(\bfx,\bfH) \in \R^N$  by
\begin{equation}
	\label{rhs-robin}
	\bffu(\bfx,\bfH)|_j =  - \int_{\Omhx} \!\!\!\! \varphi_j[\bfx]+ \int_{\Gahx} \!\!\!\! (\beta H_h+Q_h)\, \trh \varphi_j[\bfx]  \ , \quad j=1,\ldots,\dof \ ,
\end{equation}
so that
$$
\bffu(\bfx,\bfH) = - \bfM_{\bar\Om}(\bfx) \mathbf{1} + \bftr\T\bfM_{\Ga}(\bfx) (\beta \bfH+\bfQ(\bfx)),
$$
where $ \mathbf{1} \in \R^N$ is the vector with all entries equal to 1 and $\bfQ(\bfx)$ is the nodal vector of $Q$ on $\Gahx$, i.e. 
$\bfQ(\bfx)=\bigl( Q(x_i)\bigr)$.

The discretized Robin boundary value problem \eqref{robin-h} determines the nodal vector $\bfu\in\R^{N}$ as the solution of the linear system of equations
\begin{equation}\label{robin-mv}
	\Big( \bfA_{\bar\Om}(\bfx)  + \clb \bftr\T \bfA_\Ga(\bfx) \bftr + \aalpha \bftr\T \bfM_\Ga(\bfx) \bftr
	\Big) \bfu = \bffu(\bfx,\bfH).
\end{equation}

\subsubsection{Matrix--vector formulation of forced mean curvature flow}
The discrete velocity law \eqref{v-Gamma-h} written in terms of the nodal vectors becomes simply
\begin{equation} \label{v-Gamma-mv}
	\bfv_\Ga = \bfV \bullet \bfn \qquad \text{with} \qquad \bfV = - \beta\bfH + \aalpha \bftr \bfu ,
\end{equation}
where $\bullet$ denotes the componentwise product of vectors, i.e.~$(\bfV \bullet \bfn)_j = V_j \nu_j $ for
$\bfV=(V_j)\in\R^{N_\Ga}$ and $\bfn=(\nu_j)\in (\R^3)^{N_\Ga}$.
We define the functions %$\bfg(\bfx,\bfn,\bfH,\bfu) \in \R^{3 \dof_\Ga}$,
$\bff_\nu(\bfx,\bfn) \in \R^{3 \dof_\Ga}$ and $\bff_H(\bfx,\bfn,\bfV) \in \R^{\dof_\Ga}$ by
%, in view of the notations $\alpha_h^2 = |A_h|^2$, where $A_h = (\nbgh \nu_h + (\nbgh \nu_h)^T ) / 2$, and $V_h = - \beta H_h + \ga_h u_h / \alpha \in \calS_h[\bfx]$, by 
\begin{equation*}
	%\label{eq:bff definition}
	\begin{aligned}
		%	\bff_w(\bfx,\bfn,\bfH,\bfu)|_j = &\ \int_{\Gahx} \nbghx (\trh u_h) \cdot \nbghx (\trh \phi_j[\bfx]) \\
		%	\bfg(\bfx,\bfn,\bfH,\bfu)|_{j + (\ell-1) \dof_\Ga} = &\ \int_{\Gahx} V_h (\nu_h)_\ell (\trh \phi_j[\bfx]) + \int_{\Gahx} \nbghx (V_h (\nu_h)_\ell) \cdot \nbghx (\trh \phi_j[\bfx]) ,\\
		\bff_\nu(\bfx,\bfn)|_{j+(\ell-1)\dof_\Ga} = &\ \beta \int_{\Gahx} \!\!\!\! |A_h|^2 \, (\nu_h)_\ell \, \psi_j[\bfx] 
		%		\\
		%		&\ - \aalpha \int_{\Gahx} \!\!\! \left( \nbghx (\trh u_h) \right)_\ell \, \psi_j[\bfx]
		,\\
		\bff_H(\bfx,\bfn,\bfV)|_j = &\ - \int_{\Gahx} \!\!\!\! |A_h|^2 \, V_h \, \psi_j[\bfx] 
		%		\\
		%		&\ + \aalpha \int_{\Gahx} \!\!\! \left( \nbghx (\trh u_h) \right)_\ell \cdot (\nbghx \psi_j[\bfx]) \, \psi_j[\bfx]  
		,
	\end{aligned}	
\end{equation*}
for $j = 1,\dotsc,\dof_\Ga$ and $\ell = 1,2,3$.

The formulation of the semi-discretized forced mean curvature flow equations \eqref{mcf-h} then leads to the matrix--vector formulation
\begin{subequations} \label{mcf-mv}
	\begin{align} 
		\bfM_\Ga^{[3]}(\bfx) \dot{\bfn} + \beta \bfA_\Ga^{[3]} (\bfx) \bfn &=  \bff_\nu(\bfx,\bfn)  -\aalpha  \bfD_\Ga(\bfx) \bftr \bfu ,
		\\
		\bfM_\Ga(\bfx) \dot{\bfH} + \beta \bfA_\Ga(\bfx) \bfH &=  \bff_H(\bfx,\bfn,-\beta \bfH+ \alpha\gamma \bfu) +  \aalpha  \bfA_\Ga(\bfx) \bftr \bfu.
	\end{align}
\end{subequations}
%and we note that the equation for $\bfH$ is equivalent to the following equation for $\bfV$:
%\begin{equation}\label{mcf-V-mv}
%	\bfM_\Ga(\bfx) \dot{\bfV} + \beta \bfA_\Ga(\bfx) \bfV =  \bff_V(\bfx,\bfn,\bfV) +  \aalpha  \bftr \dot \bfu,
%\end{equation}
Without the coupling terms containing $\bfu$ or $\dot \bfu$, these are the discrete equations studied in \citep{MCF} for the discretization of pure mean curvature flow. 
%Equation \eqref{mcf-V-mv} for $\bfV$ will be more favourable for the stability analysis than the equation for $\bfH$ in \eqref{mcf-mv}.
%It is in particular the discrete surface Laplacian term $\bfA_\Ga(\bfx) \bfu$ that will require special care in the stability analysis. In \citep{MCF_soldriven} a different approach was utilized to tackle the same issue with the Laplacian forcing term. 

%Using the abbreviations
%\begin{align*}
%	\bfw = \begin{pmatrix} \bfn \\ \bfH \end{pmatrix} , 
%	\qquad 
%	\bff(\bfx,\bfw,\bfu) &= \begin{pmatrix} \bff_\nu(\bfx,\bfn,\bfH,\bfu) \\ \bff_V(\bfx,\bfn,\bfH,\bfu) \end{pmatrix} ,
%\end{align*}
%we can rewrite the system \eqref{mcf-mv} more compactly, with $e_4=(0,0,0,1)^T$:
%\begin{subequations}
%	\label{eq:matrix vector formulation compact}
%	\begin{align}
	%	\label{mcf-mv-compact}
	%	%	\bfK_\Ga^{[3]}(\bfx) \bfv^\Ga &= \bfg(\bfx,\bfw,\bfu) ,\\
	%	%\bfv_\Ga &= \bfV \bullet \bfn , \\
	%	\bfM_\Ga^{[4]}(\bfx) \dot{\bfw} + \beta \bfA_\Ga^{[4]} (\bfx) \bfw &= e_4 \otimes \bfA_\Ga(\bfx) \bfu +\bff(\bfx,\bfw,\bfu) \,.
	%	\end{align}
%\end{subequations}

\subsubsection{Matrix--vector formulation of the harmonic velocity extension}

%Let $w_h \in \calV_h[\bfx(t)]$ be an arbitrary extension of $v_{h,\Ga} \in \calS_h[\bfx(t)]$, i.e. $\trh w_h = v_{h,\Ga}$. A short computation shows that \eqref{harmonic-v-h} is equivalent to
%\[ \bfA_{\Om\Om}^{[3]}(\bfx) \bfv_\Om = -\begin{pmatrix} \bfA_{\Om\Ga}^{[3]}(\bfx) & \bfA_{\Om\Om}^{[3]}(\bfx) \end{pmatrix} \begin{pmatrix} \bfv_\Ga \\ \bfw_\Om \end{pmatrix} \,,\]
%where $\bfw_\Om$ is the vector containing the nodal values of the extension $w_h$ in the inner nodes. It is easily seen that the solution $\bfv = \begin{pmatrix} \bfv_\Ga \\ \bfv_\Om  + \bfw_\Om \end{pmatrix}$ does not depend on the particular choice of $\bfw_\Om$, so for practical reasons we set $\bfw_\Om = 0$. 
In view of the partitioning \eqref{A-partition},
the matrix--vector formulation for the discrete harmonic velocity extension \eqref{harmonic-v-h}  reads
\begin{equation} \label{harmonic-v-mv}
	\bfA_{\Om\Om}^{[3]}(\bfx) \bfv_\Om = - \bfA_{\Om\Ga}^{[3]}(\bfx) \bfv_\Ga \, .
\end{equation}
In summary, the matrix--vector formulation of the bulk--surface finite element semi-discretization of the coupled problem \eqref{robin-bvp}--\eqref{harmonic-v} is given by equations \eqref{robin-mv}--\eqref{harmonic-v-mv}.

\subsubsection{Matrix--vector formulation of the coupled bulk--surface problem}
In summary,  with $\bfx = (\bfx_\Ga; \bfx_\Om)$, $\bfv = (\bfv_\Ga; \bfv_\Om)$, and
\begin{equation}\label{L-matrix}
	\bfL(\bfx) := \bfA_{\bar\Om}(\bfx)  + \clb \bftr\T \bfA_\Ga(\bfx)\bftr + \aalpha \bftr\T \bfM_\Ga(\bfx) \bftr,
\end{equation}
the full matrix--vector formulation of the bulk--surface finite element semi-discretization of the coupled problem \eqref{robin-bvp}--\eqref{harmonic-v} thus reads as follows:
%and $\bff(\bfx,\bfw;\bfu)$ collecting the two components on the right-hand sides of \eqref{mcf-mv}):

\begin{subequations}
	\label{eq:matrix-vector formulation - full system}
	\begin{align}
		\label{eq:matrix-vector formulation - full system - a}
		\bfL(\bfx) \bfu &= \ \bffu(\bfx,\bfH) 
		\\[2mm]
		\label{eq:matrix-vector formulation - full system - b}
		\bfM_\Ga(\bfx) \dot{\bfn} + \beta \bfA_\Ga (\bfx) \bfn &=  \bff_\nu(\bfx,\bfn)  -\aalpha  \bfD_\Ga(\bfx) \bfu_\Ga 
		\\
		\label{eq:matrix-vector formulation - full system - c}
		\bfM_\Ga(\bfx) \dot{\bfH} + \beta \bfA_\Ga(\bfx) \bfH &=  \bff_H(\bfx,\bfn,\bfV) +  \aalpha  \bfA_\Ga(\bfx) \bfu_\Ga
		%	\\
		%	\bfM_\Ga(\bfx) \dot{\bfw} + \beta \bfA_\Ga(\bfx) \bfw = &\ \bff(\bfx,\bfw;\bfu) , 
		\\[2mm]
		\label{eq:matrix-vector formulation - full system - d}
		\bfV &=  - \beta\bfH + \aalpha \bfu_\Ga 
		\\
		\label{eq:matrix-vector formulation - full system - 4}
		\bfv_\Ga &=  \bfV \bullet \bfn    
		\\[1mm]
		\label{eq:matrix-vector formulation - full system - f}
		\bfA_{\Om\Om}(\bfx) \bfv_\Om &=  - \bfA_{\Om\Ga}(\bfx) \bfv_\Ga 
		\\[1mm]
		\label{eq:matrix-vector formulation - full system - g}
		\dot{\bfx} &=  \bfv .
	\end{align}
\end{subequations}

\subsection{Error bounds}
\label{section:main result}

%\noindent - Local well-posedness??? Appendix?! Literature?

%\noindent - Convergence theorem

Our main result yields optimal-order error bounds for the finite element semi-discretization using finite elements of polynomial degree $k \geq 2$ under the assumption that the exact solution is sufficiently regular.
%For the discrete boundary position we introduce the notation
%$$
%	x_h^L(x,t) =  X_h^L(q,t) \in \Ga_h[\bfx(t)] \qquad\hbox{for}\quad x=X(q,t) \in \Ga[X(\cdot,t)].
%$$
The following result will be proved in the course of this paper.

\begin{theorem}
	\label{theorem:main} 
	Consider the space discretization of Section~\ref{subsec:semi-disc-bs} of the coupled bulk--surface problem \eqref{robin-bvp}--\eqref{ODE}, using evolving bulk--surface finite elements of polynomial degree $k\ge 2$. 
	Suppose that the problem admits an exact solution $(X,v,\n,H,u)$ that is sufficiently differentiable on the time interval $t\in[0,T]$, and that 
	for each $t$, the flow map $X(\cdot,t)$ is an embedding.
	%	the flow map $X(\cdot,t) \colon \Ga^0\to \Ga[X]\subset\R^3$ is non-degenerate so that $\Ga[X]$ is a regular surface on the time interval $t\in[0,T]$. 
	
	Then, there exists a constant $h_0 > 0$ such that for all mesh sizes $h \leq h_0$ the following error bounds for the lifts \eqref{Lift} of the approximate tissue pressure and the discrete boundary position hold over the exact domain $\Om(t)=\Om[X(\cdot,t)]$ and exact surface $\Ga(t)=\Ga[X(\cdot,t)]$ for $0 \leq t \leq T$:
	\begin{align*}
		\|u_h^L(\cdot,t) - u(\cdot,t)\|_{H^1(\Om(t),\Ga(t))} \leq &\ C h^k, \\
		\|x_h^L(\cdot,t) - \mathrm{id}_{\Ga(t)}\|_{H^1(\Ga(t))^3} \leq &\ Ch^k,
	\end{align*}
	where $x_h^L$ is defined in \eqref{xhL}.
	
	Furthermore, $\|X_h^\ell(\cdot,t) - X(\cdot,t)\|_{H^1(\Ga_0)^3} \leq Ch^k$,
	and there are analogous error bounds for velocity, normal vector and mean curvature:
	\begin{align*}
		\|v_h^L(\cdot,t) - v(\cdot,t)\|_{H^1(\Ga(t))^3} \leq &\ C h^k, \\
		\|\n_h^L(\cdot,t) - \n(\cdot,t)\|_{H^1(\Ga(t))^3} \leq &\ C h^k, \\
		\|H_h^L(\cdot,t) - H(\cdot,t)\|_{H^1(\Ga(t))} \leq &\ C h^k. 
	\end{align*}
	The constant $C$ is independent of $h$ and $t$, but depends on bounds of higher derivatives of the solution $(u,X,v,\n,H)$  and on the length $T$ of the time interval.
\end{theorem}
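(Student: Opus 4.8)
The plan is to follow the stability-and-consistency framework for evolving-surface finite elements of \citep{MCF,MCF_soldriven}, extended to handle the bulk--surface coupling. I would write $\xs(t)$ for the nodal vector of the exact positions $X(q_j,t)$, so that $\Ga_h[\xs(t)]$ interpolates $\Ga(t)$ and $\Om_h[\xs(t)]$ interpolates $\Om(t)$, and let $\us,\ns,\Hs,\vs$ be the nodal interpolations of $u,\n,H,v$ on these interpolated domains. Substituting the starred vectors into the semi-discrete scheme \eqref{eq:matrix-vector formulation - full system} produces defects $\bfd_\bfu,\bfd_\bfn,\bfd_\bfH,\bfd_\bfv$ in \eqref{eq:matrix-vector formulation - full system - a}--\eqref{eq:matrix-vector formulation - full system - f}. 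Subtracting, the errors $\eu=\bfu-\us$, $\en=\bfn-\ns$, $\eH=\bfH-\Hs$, $\ex=\bfx-\xs$, $\ev=\bfv-\vs$ solve a perturbed linearisation of \eqref{eq:matrix-vector formulation - full system} in which the right-hand sides carry these defects together with extra terms stemming from the $\bfx$-dependence of the mass and stiffness matrices.

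\textbf{Stability.} Reading all matrix--vector expressions as squared $H^1$-type norms of finite element functions on $\Ga_h[\xs]$ and $\Om_h[\xs]$, I would estimate in this order. First, testing the elliptic error equation for $\eu$ with $\eu$ and using coercivity of the generalized-Robin bilinear form gives
\[
  \| \eu \|_{H^1(\Om_h[\xs])}^2 + \clb \, \normAnum{\bftr\eu}^2 + \aalpha\,\normMnum{\bftr\eu}^2 \ \le \ C\Big( \normMnum{\eH}^2 + \| \bfd_\bfu \|_\star^2 + c(\ex) \Big),
\]
where $\|\cdot\|_\star$ is a discrete $H^1(\Om_h[\xs])'$-norm and $c(\ex)$ collects geometric perturbation terms; the essential gain is the term $\clb\,\normAnum{\bftr\eu}^2$, which controls the trace error of $u$ in $H^1(\Ga)$. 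Next, energy estimates for the parabolic error equations for $\en$ and $\eH$: testing with $\en,\dot\en$ and $\eH,\dot\eH$ and using Gronwall-type manipulations bounds $\|\en\|,\|\eH\|$ in the $\bfM_\Ga$- and $\bfA_\Ga$-norms by the defects, by $\normAnum{\bftr\eu}$ (from the coupling term $\aalpha\bfA_\Ga\bftr\eu$ in \eqref{eq:matrix-vector formulation - full system - c}), and by a weaker norm of $\bftr\eu$ (from $\aalpha\bfD_\Ga\bftr\eu$ in \eqref{eq:matrix-vector formulation - full system - b}). Then \eqref{eq:matrix-vector formulation - full system - d}--\eqref{eq:matrix-vector formulation - full system - 4} bound $\ev_\Ga$ by $\eH,\bftr\eu,\en$; the discrete harmonic extension \eqref{eq:matrix-vector formulation - full system - f} with a discrete trace/extension estimate bounds $\ev_\Om$ by $\ev_\Ga$; and \eqref{eq:matrix-vector formulation - full system - g} integrated in time gives $\ex$ in terms of $\int_0^t\|\ev\|$. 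Closing the loop uses the geometric perturbation lemmas of \citep{MCF,ElliottRanner_bulksurface}: the differences of the $\bfx$- and $\xs$-matrices are bounded in the relevant operator norms by a constant times $\|\ex\|_{W^{1,\infty}}$, which is made small by a bootstrap hypothesis on $\ex$ combined with an inverse estimate --- this is where $k\ge 2$ enters. Gronwall's inequality then yields, uniformly on $[0,T]$,
\[
  \| \eu(t) \|_{H^1(\Om_h[\xs],\Ga_h[\xs])} + \| \en(t)\|_{H^1} + \| \eH(t)\|_{H^1} + \| \ex(t)\|_{H^1} \ \le \ C \max_{s\le t}\big( \| \bfd_\bfu \|_\star + \| \bfd_\bfn \| + \| \bfd_\bfH \| + \| \bfd_\bfv \| \big),
\]
which also closes the bootstrap a posteriori.

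\textbf{Main obstacle.} The hard part will be the absorption of the coupling term $\aalpha\,\bfA_\Ga(\bfx)\bftr\eu$ in the error equation for $\eH$ --- the discrete image of the second-order term $\aalpha\lb u$ in \eqref{eq:forced mcf-H}. This requires controlling $\eu$ in $H^1(\Ga)$, which is unavailable for the original Robin problem \eqref{robin} and is supplied only by the regularization $-\clb\lb u$ of \eqref{robin-bc-gen}, through the term $\clb\,\normAnum{\bftr\eu}^2$ above; one must track constants carefully so that this together with the $\bfA_\Ga$-norms of $\en$ and $\eH$ can all be absorbed after Young's inequality, and so that the energy estimates for $\en,\eH$ feed back into the elliptic estimate consistently. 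One must moreover ensure that the bulk part of $\bfd_\bfu$ enters only through the weaker $H^1(\Om_h[\xs])'$-type norm, since an $L^2(\Om)$ estimate of it would be of suboptimal order; the elliptic stability estimate for the generalized-Robin problem that makes this possible is the content of the auxiliary results in Section~\ref{section:laplace}. Subsection~\ref{subsec:why-not-EKS} is expected to record precisely why this absorption fails without the regularization.

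\textbf{Consistency and conclusion.} Each defect is split into (a) the geometric error between $\Ga_h[\xs]$ (resp.\ $\Om_h[\xs]$) and $\Ga$ (resp.\ $\Om$), controlled by the lift estimates of \citep{Dziuk88,ElliottRanner_bulksurface}; (b) the finite element interpolation error of $u,\n,H,v$; and (c) the mismatch between the material derivative $\mat$ and the discrete material derivative $\mat_h$. Standard interpolation theory for degree-$k$ elements, together with the smoothness assumption on the exact solution, bounds all these contributions by $Ch^k$ in the required norms --- in particular the bulk part of $\bfd_\bfu$ in the $H^1(\Om_h[\xs])'$-type norm. Inserting this into the stability estimate gives $\|\eu\|_{H^1}+\|\en\|_{H^1}+\|\eH\|_{H^1}+\|\ex\|_{H^1}\le Ch^k$; a final triangle inequality with the interpolation and lift estimates transfers these bounds from the interpolated domains to $\Om(t)$ and $\Ga(t)$ and from $\eu,\en,\eH,\ex$ to the errors of $u_h^L,\n_h^L,H_h^L,x_h^L$ and of $X_h^\ell$, while the bound for $v_h^L$ follows from \eqref{eq:matrix-vector formulation - full system - 4} and \eqref{eq:matrix-vector formulation - full system - f}. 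This would establish the theorem.
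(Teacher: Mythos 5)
Your overall architecture --- matrix--vector error equations, energy-based stability with a $W^{1,\infty}$ bootstrap closed by inverse estimates (whence $k\ge 2$), defect bounds by consistency, and the observation that the regularization term is what supplies the $H^1(\Ga)$ control of the trace error of $u$ needed to absorb the coupling term $\aalpha\,\bfA_\Ga\bftr\eu$ --- matches the paper. There is, however, a genuine gap in the consistency half: you take the reference finite element functions $u_h^*,\nu_h^*,H_h^*$ to be \emph{nodal interpolations}. With interpolation, the defects of the second-order equations contain stiffness mismatches of the form $a_{\Ga_h^*}(\Ih^\Ga H,\varphi_h)-a_\Ga(H,\varphi_h^\ell)$, which are of size $O(h^k)$ only when paired against $\|\varphi_h\|_{H^1}$; as $L^2$-functionals they are not of order $h^k$ (integrating by parts on the discrete surface produces element-boundary contributions that destroy the order). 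But the stability bound requires exactly $\|\bfd_\bfn\|_{\bfM_\Ga}$, $\|\bfd_\bfH\|_{\bfM_\Ga}$ and $\|\bfd_\bfu\|_{\bfM_{\bar\Om}}$, $\|\dot\bfd_\bfu\|_{\bfM_{\bar\Om}}$, i.e.\ $L^2$-type norms, see \eqref{eq:stability - defect bounds}: the surface defects are tested against $\dot\bfe_\bfn,\dot\bfe_\bfH$, whose $H^1$ norms are not controlled by the energy estimates, and the critical term $\dot\bfe_\bfH^T\bfA_\Ga(\xs)\bftr\bfe_\bfu$ forces, after integration by parts in time, a bound on $\|\dot\bfe_\bfu\|_{\bfL(\xs)}$ and hence on $\dot\bfd_\bfu$ --- a step your sketch omits. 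Your suggestion to measure the bulk part of $\bfd_\bfu$ only in an $H^{-1}$-type norm does not resolve this.

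The paper's remedy, absent from your plan, is the choice of Ritz-type projections: the surface Ritz map \eqref{eq:Ritz map} for $\nu$ and $H$, and a generalized bulk Ritz map \eqref{eq:Ritz map with b.c. - Ritz} for $u$ whose Dirichlet data is prescribed as $\Rh^\Ga(\ga u)$. This matching makes the bulk stiffness pair and the surface (regularization) stiffness pair cancel exactly in $d_u$, and likewise the stiffness pairs in $d_\nu,d_H$, leaving only mass-type geometric errors of order $h^{k+1}$ in $L^2$; interpolation is retained only for $X$ and $v$, which is precisely why only the harmonic-extension defect $\bfd_\bfv$ is measured in the dual norm and handled via Proposition~\ref{prop:Dirichlet-h}. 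Note also that the appendix's Robin estimate (Proposition~\ref{prop:robin-h}) is not what makes the regularized analysis work --- the $H^1(\Ga_h)$ control of $e_u$ comes directly from testing the elliptic error equation with $\bfe_\bfu$, and the Robin proposition only enters the discussion of why the unregularized Eyles--King--Styles model is out of reach. Without the Ritz-map device (or an equivalent), your consistency step cannot deliver the defect bounds your stability step needs, so the argument as proposed does not close at order $h^k$.
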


Theorem~\ref{theorem:main} will be proved in the course of Sections~\ref{section:stability} and \ref{section:consistency}, using also results from the appendix (Section~\ref{section:laplace}).

%Sufficient regularity assumptions are the following: with bounds that are uniform in $t\in[0,T]$, we assume $X(\cdot,t) \in  H^{k+1}(\Ga^0)$,
%$v(\cdot,t) \in H^{k+1}(\Ga(t))^3$, for $w=(\nu,H)$ we assume $\ w(\cdot,t), \mat w(\cdot,t) \in W^{k+1,\infty}(\Ga(t))^4$, and for the tissue pressure $u(\cdot,t)\in H^{k+1}(\Om(t),\Ga(t))$.

\section{Stability of the spatially discretized bulk--surface problem}
\label{section:stability}

In this section we prove stability of the semi-discretization of the coupled bulk--surface problem in the sense that errors are bounded by defects in the semi-discrete equations in appropriate norms. The precise result is stated in Proposition~\ref{prop:stability} below.
Its proof requires error bounds for each of the three sub-problems (generalized Robin boundary value problem, forced mean curvature flow, harmonic velocity extension) and their non-trivial coupling.

For ease of presentation we choose in the following the parameters
$$
\alpha = \beta = \mu= 1.
$$ 
The general case just leads to different constants, as we will not study asymptotic limits as $\alpha$ or $\beta$ or $\mu$ goes to zero or infinity.

\subsection{Preparation: Estimates relating different mass and stiffness matrices}
\label{subsec:aux} 
In the following stability proof, we use technical results relating different finite element domains, which were proved in \citep{KLLP2017} for the surface mass and stiffness matrices and in \citep{Edelmann_harmonicvelo} for the bulk mass and stiffness matrices. We use the following setting.

Let $\bfx \in \R^{3N}$ be a nodal vector defining the discrete domain $\Omhx$ with boundary $\Gahx$. We denote by $\bfx_\Ga \in \R^{3 N_\Ga}$ the nodes of $\bfx$ that lie on the boundary $\Gahx$. For any nodal vector $\bfu = (u_j) \in \R^N$ and $\bfw = (w_j) \in \R^{N_\Ga}$ with corresponding finite element functions $u_h = \sum_{j=1}^N u_j \varphi_j[\bfx] \in \calV_h[\bfx]$ and $w_h = \sum_{j=1}^{N_\Ga} w_j \varphi_j[\bfx] \in \calS_h[\bfx]$, respectively, the mass and stiffness matrices %and the matrix $\bfL(\bfx)$ 
define norms on $\Omhx$: 
\begin{equation}
	\begin{aligned}
		\| \bfu \|_{\bfM_{\bar\Om}(\bfx)}^2 &= \bfu\T \bfM_{\bar\Om}(\bfx) \bfu = \|u_h\|_{L^2(\Omhx)}^2 \,, \\
		\| \bfu \|_{\bfA_{\bar\Om}(\bfx)}^2 &= \bfu\T \bfA_{\bar\Om}(\bfx) \bfu = \| \nb u_h\|_{L^2(\Omhx)}^2 \,,\\
		%\\
		%	\| \bfu \|_{\bfL(\bfx)}^2 &= \bfu\T \bfL(\bfx) \bfu = \| u_h \|_{H^1(\Omhx,\Gahx)}^2
	\end{aligned}
\end{equation}
and in the same way for the surface mass and stiffness matrices.

Let $\bfx,\xs \in \R^{3N}$ be two nodal vectors that define discrete domains $\Omh[\bfx]$ and $\Omh[\xs]$ with boundaries $\Gahx$ and $\Gah[\xs]$. We denote the difference by $\bfe = \bfx-\xs$. For $\theta \in [0,1]$, we consider the intermediate domain $\Omh^\theta = \Omh[\xs+\theta \bfe]$ and the corresponding finite element function given by
\[ e_h^\theta = \sum_{j=1}^N e_j \phi_j[\xs+\theta \bfe] \,. \]
In the same way, any vectors $\bfu \in \R^N$ or $\bfw \in \R^{N_\Ga}$ define finite element functions $u_h^\theta \in \calV_h[\xs+\theta \bfe]$ and $w_h^\theta \in \calS_h[\xs+\theta \bfe]$, respectively.

\begin{lemma}\label{lemma:matrix differences}
	In the above setting, the following identities hold for any bulk nodal vectors $\bfw, \bfz \in \R^N$ or surface nodal vectors $\bfw,\bfz \in \R^{N_\Ga}$:
	\begin{equation}
		\begin{aligned}
			\bfw\T \Big( \bfM_{\bar\Om}(\bfx) - \bfM_{\bar\Om}(\xs) \Big) \bfz &= \int_0^1 \int_{\Omh^\theta} w_h^\theta ( \nb \cdot e_h^\theta) z_h^\theta \  \dtheta \\
			\bfw\T \Big( \bfA_{\bar\Om}(\bfx) - \bfA_{\bar\Om}(\xs) \Big) \bfz &= \int_0^1 \int_{\Omh^\theta} \nb w_h^\theta \cdot \Big( D_{\Omh^\theta} e_h^\theta \Big) \nb z_h^\theta \  \dtheta \\
			\bfw\T \Big( \bfM_\Ga(\bfx) - \bfM_\Ga(\xs) \Big) \bfz &= \int_0^1 \int_{\Gah^\theta} w_h^\theta ( \nb_{\Gah^\theta} \cdot e_h^\theta) z_h^\theta \  \dtheta \\
			\bfw\T \Big( \bfA_\Ga(\bfx) - \bfA_\Ga(\xs) \Big) \bfz &= \int_0^1 \int_{\Gah^\theta} \nb_{\Gah^\theta} w_h^\theta \cdot \Big( D_{\Gah^\theta} e_h^\theta \Big) \nb_{\Gah^\theta} z_h^\theta \  \dtheta, \\
		\end{aligned}
	\end{equation}
	where we have $D_{\Omh^\theta} e_h^\theta = \textnormal{tr}(E^\theta) I_3 - (E^\theta+(E^\theta)^T)$ with $E^\theta=\nabla e_h^\theta \in \R^{3\times 3}$ and 
	$D_{\Ga_h^\theta} e_h^\theta =  \textnormal{tr}(G^\theta) I_3 - (G^\theta+(G^\theta)^T)$ with $G^\theta=\nabla_{\Ga_h^\theta} e_h^\theta \in \R^{3\times 3}$.	
	
	Furthermore, for any $\bfw \in \R^{3 N_\Ga}$, $z \in \R^{N_\Ga}$, the following identity holds:
	\begin{equation}
		\bfw\T \Big( \bfD(\bfx) - \bfD(\xs) \Big) \bfz = \int_0^1 \int_{\Gah^\theta} \Big( w_h^\theta \cdot \nb_{\Gah^\theta} z_h^\theta \Big) \nb_{\Gah^\theta} \cdot e_h^\theta  \dtheta \,. 
	\end{equation}
\end{lemma}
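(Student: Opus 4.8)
The plan is to view, for fixed nodal vectors $\bfw,\bfz$, each scalar quantity $\bfw\T\bfM_{\bar\Om}(\bfx^\theta)\bfz$, $\bfw\T\bfA_{\bar\Om}(\bfx^\theta)\bfz$, $\bfw\T\bfM_\Ga(\bfx^\theta)\bfz$, $\bfw\T\bfA_\Ga(\bfx^\theta)\bfz$ and $\bfw\T\bfD(\bfx^\theta)\bfz$ as a function of the single parameter $\theta\in[0,1]$ along the segment $\bfx^\theta=\xs+\theta\bfe$, to differentiate it in $\theta$ by a Leibniz-type transport formula for moving domains and surfaces, and to recover the stated identities by the fundamental theorem of calculus, e.g.\ $\bfw\T\big(\bfM_{\bar\Om}(\bfx)-\bfM_{\bar\Om}(\xs)\big)\bfz=\int_0^1\frac{\d}{\d\theta}\big(\bfw\T\bfM_{\bar\Om}(\bfx^\theta)\bfz\big)\,\dtheta$.

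Two observations prepare the ground. First, by the definition of the matrices one has $\bfw\T\bfM_{\bar\Om}(\bfx^\theta)\bfz=\int_{\Omh^\theta}w_h^\theta z_h^\theta$, $\bfw\T\bfA_{\bar\Om}(\bfx^\theta)\bfz=\int_{\Omh^\theta}\nb w_h^\theta\cdot\nb z_h^\theta$, and likewise $\bfw\T\bfM_\Ga(\bfx^\theta)\bfz=\int_{\Gah^\theta}w_h^\theta z_h^\theta$, $\bfw\T\bfA_\Ga(\bfx^\theta)\bfz=\int_{\Gah^\theta}\nb_{\Gah^\theta}w_h^\theta\cdot\nb_{\Gah^\theta}z_h^\theta$ and $\bfw\T\bfD(\bfx^\theta)\bfz=\int_{\Gah^\theta}w_h^\theta\cdot\nb_{\Gah^\theta}z_h^\theta$, where $w_h^\theta$ and $z_h^\theta$ are the finite element functions carrying the fixed nodal values $\bfw,\bfz$ in the moving basis $\{\varphi_j[\bfx^\theta]\}$ on $\Omh^\theta$ (and $\{\psi_j[\bfx^\theta]\}$ on $\Gah^\theta$). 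Second, $\Omh^\theta=\Om[\xs+\theta\bfe]$ is precisely the domain transported by the finite element flow with velocity $e_h^\theta=\sum_j e_j\varphi_j[\bfx^\theta]$, since $\frac{\d}{\d\theta}x_j^\theta=e_j$; by the transport property of the moving basis functions \citep{DziukElliott_ESFEM}, the functions $w_h^\theta$ and $z_h^\theta$ are then material along this flow, i.e.\ their material derivative $\mat$ (taken along $e_h^\theta$) vanishes, and the same holds on $\Gah^\theta=\partial\Omh^\theta$, whose motion is driven by the surface velocity $\gamma_h e_h^\theta$, compatibly with the partition $\bfe=(\bfe_\Ga;\bfe_\Om)$.

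Given this, the five identities follow by standard calculations. For the mass matrices the Leibniz formulas $\frac{\d}{\d\theta}\int_{\Omh^\theta}f^\theta=\int_{\Omh^\theta}\big(\mat f^\theta+f^\theta\,\nb\cdot e_h^\theta\big)$ and $\frac{\d}{\d\theta}\int_{\Gah^\theta}f^\theta=\int_{\Gah^\theta}\big(\mat f^\theta+f^\theta\,\nb_{\Gah^\theta}\cdot e_h^\theta\big)$, applied with $f^\theta=w_h^\theta z_h^\theta$ and $\mat f^\theta=0$, give the first and third identities after integrating in $\theta$. For the bulk stiffness matrix one additionally uses that, since $\mat w_h^\theta=\mat z_h^\theta=0$, the chain rule gives $\mat\big(\nb w_h^\theta\cdot\nb z_h^\theta\big)=-\,\nb w_h^\theta\cdot\big(E^\theta+(E^\theta)\T\big)\nb z_h^\theta$ with $E^\theta=\nb e_h^\theta$; adding the divergence contribution $\textnormal{tr}(E^\theta)\,\nb w_h^\theta\cdot\nb z_h^\theta$ coming from the Leibniz formula reconstructs exactly $\nb w_h^\theta\cdot\big(D_{\Omh^\theta}e_h^\theta\big)\nb z_h^\theta$ with $D_{\Omh^\theta}e_h^\theta=\textnormal{tr}(E^\theta)I_3-\big(E^\theta+(E^\theta)\T\big)$, which gives the second identity. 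The surface stiffness identity is the same computation with $G^\theta=\nb_{\Gah^\theta}e_h^\theta$ in place of $E^\theta$ — after contraction with the tangential vectors $\nb_{\Gah^\theta}w_h^\theta$ and $\nb_{\Gah^\theta}z_h^\theta$ the normal-direction corrections in the transport rule for $\nb_{\Gah^\theta}w_h^\theta$ drop out — and the $\bfD$ identity follows likewise, differentiating $\int_{\Gah^\theta}w_h^\theta\cdot\nb_{\Gah^\theta}z_h^\theta$ (with $w_h^\theta$ vector-valued and $z_h^\theta$ scalar) using the surface Leibniz formula and the transport rule for $\nb_{\Gah^\theta}z_h^\theta$.

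I expect the only delicate point to be the transport rule for the elementwise tangential gradient on the moving discrete surface, since $\nb_{\Gah^\theta}$ depends on the moving, curved tangent planes; the cleanest route is to pull each curved element back to its fixed reference element, where the nodal parametrization is affine in $\theta$, to differentiate the first fundamental form and the Jacobian there, and to push forward. These elementwise computations, together with their bulk analogues, are exactly the ones established in \citep{KLLP2017} for the surface mass and stiffness matrices and in \citep{Edelmann_harmonicvelo} for the bulk mass and stiffness matrices, so I would invoke those and confine the remaining work to assembling the elementwise identities into the global ones — and no bulk--surface interface terms arise, because $\Gah[\bfx]=\partial\Omh[\bfx]$ depends only on $\bfx_\Ga$.
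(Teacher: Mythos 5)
Your overall strategy --- rewriting each quadratic form as an integral over the intermediate domain $\Omh^\theta$ or surface $\Gah^\theta$, differentiating in $\theta$ with the Leibniz (transport) formula using that $w_h^\theta$, $z_h^\theta$ are material along the flow generated by $e_h^\theta$, and integrating from $0$ to $1$ --- is exactly the paper's route, and for the four mass/stiffness identities your sketch is precisely the elementwise computation of \citep{KLLP2017} (surface) and \citep{Edelmann_harmonicvelo} (bulk) that the paper simply cites; that part is in order.

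The gap is in the last identity, for $\bfD$. There the integrand $w_h^\theta\cdot\nb_{\Gah^\theta}z_h^\theta$ is \emph{not} material: although $\mat_\theta z_h^\theta=0$, one has $\mat_\theta\bigl(\nb_{\Gah^\theta}z_h^\theta\bigr)\neq 0$, so the surface Leibniz formula gives
\[
\frac{\d}{\d\theta}\int_{\Gah^\theta} w_h^\theta\cdot\nb_{\Gah^\theta}z_h^\theta
=\int_{\Gah^\theta}\Bigl( w_h^\theta\cdot\mat_\theta\bigl(\nb_{\Gah^\theta}z_h^\theta\bigr)
+\bigl(w_h^\theta\cdot\nb_{\Gah^\theta}z_h^\theta\bigr)\,\nb_{\Gah^\theta}\cdot e_h^\theta\Bigr),
\]
and your ``follows likewise'' step silently drops the first term, which is a contraction of $\nb_{\Gah^\theta}e_h^\theta$ with $\nb_{\Gah^\theta}z_h^\theta$ and does not vanish. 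That it cannot simply be discarded is seen already on a single flat element: take the triangle with vertices $(0,0),(1,0),(0,1)$ stretched to $(0,0),(2,0),(0,1)$ (so $e_h$ is tangential, with nodal values $0,(1,0,0),0$), let $\bfw$ give the constant field $(1,0,0)$ and $\bfz$ the hat function at the second vertex; then $\int_{\Gah^\theta} w_h^\theta\cdot\nb_{\Gah^\theta}z_h^\theta=\tfrac12$ for every $\theta$, so the $\theta$-derivative is $0$, while $\int_{\Gah^\theta}\bigl(w_h^\theta\cdot\nb_{\Gah^\theta}z_h^\theta\bigr)\nb_{\Gah^\theta}\cdot e_h^\theta=\tfrac{1}{2(1+\theta)}>0$. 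Hence the naive gradient-transport computation does not deliver the stated right-hand side: you must either keep the extra term $\int_0^1\int_{\Gah^\theta}w_h^\theta\cdot\mat_\theta(\nb_{\Gah^\theta}z_h^\theta)\,\dtheta$ explicitly, or supply the different argument the paper appeals to (it obtains the $\bfD$-identity from the third, mass-matrix, identity rather than by differentiating $\int w_h^\theta\cdot\nb_{\Gah^\theta}z_h^\theta$ directly) --- a step your sketch does not contain and which, in view of the computation above, needs to be spelled out with care.
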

\begin{proof}
	The formulae result from the Leibniz rule (transport formula). A proof of the first pair of identities is found in \cite[Lemma 5.1]{Edelmann_harmonicvelo}, for the second pair in \cite[Lemma 4.1]{KLLP2017}. The last identity follows from the third one.
	%	a combination of the third and fourth. \ede	 
\end{proof}

The following lemma combines Lemmas 4.2 and 4.3 of \citep{KLLP2017} and Lemmas 5.2 and 5.3 of \citep{Edelmann_harmonicvelo}.
\begin{lemma}\label{lemma:theta-independence}
	In the above setting, suppose that
	%	\[ \| \nb_{\Gah[\xs]} e_h^0 \|_{L^\infty(\Gah[\xs])} \le \tfrac{1}{2} \quad\text{and}\quad 
	$$
	\| \nb e_h^0 \|_{L^\infty(\Omh[\xs])} \le \tfrac{1}{2} \,.
	$$
	Then, for $0 \le \theta \le 1$, the finite element functions $u_h^\theta = \sum_{j=1}^N u_j \phi_j[\xs+\theta\bfe]$ on $\Omh^\theta$ and $w_h^\theta = \sum_{j=1}^{N_\Ga} w_j \phi_j[\xs+\theta \bfe]$ on $\Gah^\theta$ are bounded by
	\begin{equation}
		\begin{alignedat}{3}
			\| u_h^\theta \|_{L^p(\Omh^\theta)} &\le c_p \| u_h^0 \|_{L^p(\Omh^\theta)},  &\ \qquad \| \nb u_h^\theta \|_{L^p(\Omh^\theta)} \le &\ c_p \| \nb u_h^0 \|_{L^p(\Omh^0)} , \\
			\| w_h^\theta \|_{L^p(\Gah^\theta)} &\le c_p \| w_h^0 \|_{L^p(\Gah^0)},  &\ \qquad  \| \nb_{\Gah^\theta} w_h^\theta \|_{L^p(\Gah^\theta)} \le &\ c_p \| \nb_{\Gah^0} w_h^0 \|_{L^p(\Gah^0)} ,
		\end{alignedat}
	\end{equation}
	for $1\le p \le\infty$, where $c_p<\infty$ is independent of $\theta \in [0,1]$ and of $h$. %Moreover, $c_\infty=2$.
\end{lemma}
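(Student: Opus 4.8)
The plan is to compare each finite element function on $\Omh^\theta$ (resp.\ $\Gah^\theta$) with the function carrying the same nodal values on $\Omh^0:=\Omh[\xs]$ (resp.\ $\Gah^0:=\Gah[\xs]$), by transporting along the piecewise polynomial diffeomorphism that deforms one discrete domain into the other. On a bulk element $E\in\calTh$, let $F_E^{\bfy}\colon\widehat E\to E[\bfy]$, $F_E^{\bfy}=\sum_i y_i\widehat\phi_i$, denote the degree-$k$ parametrization from the reference tetrahedron, so that the finite element function $\sum_i u_i\varphi_i[\bfy]$ pulls back under $F_E^{\bfy}$ to the fixed reference polynomial $\widehat u:=\sum_i u_i\widehat\phi_i$, independently of the nodal vector $\bfy$. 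Since $F_E^{\bfy}$ is affine in $\bfy$ and $F_E^{\bfe}=e_h^0\circ F_E^{\xs}$, the diffeomorphism $\Psi_E^\theta:=F_E^{\xs+\theta\bfe}\circ(F_E^{\xs})^{-1}\colon E[\xs]\to E[\xs+\theta\bfe]$ satisfies $u_h^\theta\circ\Psi_E^\theta=u_h^0$ and has Jacobian
\begin{equation*}
	D\Psi_E^\theta = I + \theta\,\nabla e_h^0 \qquad\text{on } E[\xs] .
\end{equation*}
Gluing the $\Psi_E^\theta$ over all elements yields a globally continuous, elementwise $C^1$ bijection $\Omh^0\to\Omh^\theta$. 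The surface case is identical with the $3\times2$ parametrizations $F_e^{\xs}$; note that on $\Gah^0$ the tangential gradient $\nb_{\Gah^0}e_h^0$ is the tangential projection of the bulk gradient, so the hypothesis $\|\nb e_h^0\|_{L^\infty(\Omh[\xs])}\le\tfrac12$ also bounds $\|\nb_{\Gah^0}e_h^0\|_{L^\infty(\Gah^0)}$.

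For the bulk estimates I would then use that $\|D\Psi_E^\theta-I\|_{L^\infty}=\theta\,\|\nabla e_h^0\|_{L^\infty}\le\tfrac12$ uniformly in $\theta\in[0,1]$ and in $h$, so a Neumann series gives $\|(D\Psi_E^\theta)^{-1}\|\le2$ and $\det D\Psi_E^\theta\in[2^{-3},(3/2)^3]$ on every element (in particular every $\Omh^\theta$ is nondegenerate). The change of variables $y=\Psi_E^\theta(x)$ gives
\begin{equation*}
	\int_{E[\xs+\theta\bfe]}|u_h^\theta|^p = \int_{E[\xs]}|u_h^0|^p\,\bigl|\det D\Psi_E^\theta\bigr| ,
\end{equation*}
and, since $\nabla u_h^\theta\circ\Psi_E^\theta=(D\Psi_E^\theta)^{-\mathrm T}\nabla u_h^0$, the same identity applied to $|\nabla u_h^\theta|^p$ carries the additional factor $\bigl|(D\Psi_E^\theta)^{-\mathrm T}\bigr|^p$. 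Summing over $E\in\calTh$ and inserting the uniform bounds on $\det D\Psi_E^\theta$ and $(D\Psi_E^\theta)^{-1}$ produces the two bulk inequalities for $1\le p<\infty$; the case $p=\infty$ is immediate from $|u_h^\theta(\Psi_E^\theta(x))|=|u_h^0(x)|$ and $|\nabla u_h^\theta(\Psi_E^\theta(x))|\le 2\,|\nabla u_h^0(x)|$.

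For the surface estimates I would run the same computation through local parametrizations. Writing $g^\theta:=(DF_e^{\xs+\theta\bfe})^{\mathrm T}DF_e^{\xs+\theta\bfe}$ for the first fundamental form and $DF_e^{\xs+\theta\bfe}=DF_e^{\xs}+\theta\,DF_e^{\bfe}$, splitting $DF_e^{\bfe}$ into its parts tangential and normal to $\Gah^0$ and using that the columns of $DF_e^{\xs}$ are tangential yields $g^\theta=(I_2+\theta Q)^{\mathrm T}g^0(I_2+\theta Q)+\theta^2R^{\mathrm T}R$, where $Q$ and $R$ are bounded by $\|\nb_{\Gah^0}e_h^0\|_{L^\infty}$. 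The bound $\|\nb_{\Gah^0}e_h^0\|_{L^\infty}\le\tfrac12$ then makes $g^\theta$ uniformly equivalent to $g^0$ as quadratic forms for all $\theta\in[0,1]$, which controls both the surface element $\sqrt{\det g^\theta}/\sqrt{\det g^0}$ and the tangential-gradient transformation (the pulled-back integrand $|\nb_{\Gah^\theta}w_h^\theta|^2\circ F_e^{\xs+\theta\bfe}$ equals $(\widehat\nabla\widehat w)^{\mathrm T}(g^\theta)^{-1}\widehat\nabla\widehat w$); summing over the triangular elements gives the two surface inequalities. These are exactly the arguments of \cite[Lemmas~4.2 and~4.3]{KLLP2017} (surface) and \cite[Lemmas~5.2 and~5.3]{Edelmann_harmonicvelo} (bulk), applied with unperturbed configuration $\xs$ and perturbation $\theta\bfe$; the present lemma merely collects them under the single hypothesis, which, being uniform in $\theta$, implies the smallness conditions needed there.

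The only point requiring real care is the surface case: because $\Psi_E^\theta$ maps one curved degree-$k$ surface to another rather than an open set of $\R^3$ to another, one must pass to the first fundamental form and verify that its $\theta$-dependence is a multiplicative perturbation controlled solely by $\|\nb_{\Gah^0}e_h^0\|_{L^\infty}$, including the (harmless) normal part of $\nb e_h^0$ that enters $g^\theta$ only at order $\theta^2$. In the bulk everything is elementary once the identity $D\Psi_E^\theta=I+\theta\,\nabla e_h^0$ is available; this identity holds pointwise for every polynomial degree $k$, since $F_E^{\bfy}$ is affine in the nodal vector $\bfy$, so higher-order elements require no separate treatment here.
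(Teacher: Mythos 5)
Your proof is correct and follows essentially the same route as the paper, which simply invokes Lemmas~4.2--4.3 of \citep{KLLP2017} and Lemmas~5.2--5.3 of \citep{Edelmann_harmonicvelo}; your transport map $\Psi_E^\theta=\mathrm{id}+\theta e_h^0$ with $D\Psi_E^\theta=I+\theta\nabla e_h^0$, the Neumann-series and determinant bounds, and the first-fundamental-form comparison on the surface are exactly the arguments underlying those cited results. The only cosmetic remark is that the right-hand side of the first inequality should read $\|u_h^0\|_{L^p(\Omh^0)}$ (the domain $\Omh^\theta$ in the statement is a typo), which is what your change of variables in fact yields.
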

%\redon Nochmal überprüfen, ob $c_\infty=2$ auch in \citep{Edelmann_harmonicvelo}. \redoff

If $\| \nabla_{\Ga_h[\xs]} e_h^0 \|_{L^\infty(\Ga_h[\xs])}\le \frac14$, using the lemma for $w_h^\theta=e_h^\theta$ shows that
\begin{equation}
	\label{e-theta}
	\| \nabla_{\Ga_h^\theta} e_h^\theta \|_{L^\infty(\Ga_h^\theta)} \le \tfrac12, \qquad 0\le\theta\le 1,
\end{equation}
and then Lemma~\ref{lemma:theta-independence} with $p=2$ and interchanging the roles of $\xs$ and $\xs+\theta\bfe$ yield the following:
\begin{equation}
	\label{norm-equiv}
	\begin{aligned}
		&\text{The norms $\|\cdot\|_{\bfM_{\bar\Om}(\xs+\theta\bfe)}$ are $h$-uniformly equivalent for $0\le\theta\le 1$,}
		\\
		&\text{and so are the norms $\|\cdot\|_{\bfA_{\bar\Om}(\xs+\theta\bfe)}$, 
			$\|\cdot\|_{\bfM_{\Ga}(\xs+\theta\bfe)}$, $\|\cdot\|_{\bfA_{\Ga}(\xs+\theta\bfe)}$.}
	\end{aligned}
\end{equation}

Under the condition that 
$\eps := \| \nabla_{\Ga_h[\xs]} e_h^0 \|_{L^\infty(\Ga_h[\xs])}\le \tfrac14$, using \eqref{e-theta} in Lemma~\ref{lemma:matrix differences}  and applying the Cauchy--Schwarz inequality yields the bounds, with $c=c_\infty c_2^2$,
\begin{equation}
	\label{matrix difference bounds}
	\begin{aligned}
		\bfw^T (\bfM_{\bar\Om}(\bfx)-\bfM_{\bar\Om}(\xs)) \bfz \leq &\ c \eps \, \|\bfw\|_{\bfM_{\bar\Om}(\xs)} \|\bfz\|_{\bfM_{\bar\Om}(\xs)} , \\[1mm]
		\bfw^T (\bfA_{\bar\Om}(\bfx)-\bfA_{\bar\Om}(\xs)) \bfz \leq &\ c \eps \, \|\bfw\|_{\bfA_{\bar\Om}(\xs)} \|\bfz\|_{\bfA_{\bar\Om}(\xs)} 
	\end{aligned}
\end{equation}
and the analogous formulae for $\bfM_\Ga$ and $\bfA_\Ga$.
We will also use similar bounds where we use the $L^\infty$ norm of $z_h$ or its gradient and the $L^2$ norm of the gradient of $e_h$.
%\begin{equation}
%\label{matrix difference bounds e_x}
%	\begin{aligned}
	%		\bfw^T (\bfM(\bfx)-\bfM(\xs)) \bfz \leq &\ c \, \|\bfw\|_{\bfM(\xs)} \|\bfe\|_{\bfA(\xs)} , \\[1mm]
	%		\bfw^T (\bfA(\bfx)-\bfA(\xs)) \bfz \leq &\ c \, \|\bfw\|_{\bfA(\xs)} \|\bfe\|_{\bfA(\xs)} .
	%	\end{aligned}
%\end{equation}

Consider now a continuously differentiable function $\bfx \colon [0,T]\to\R^{3N}$ that defines a finite element domain $\Om_h[\bfx(t)]$ for every $t\in[0,T]$, and assume that its time derivative $\bfv(t)=\dot\bfx(t)$ is the nodal vector of a finite element function $v_h(\cdot,t)$ that satisfies
\begin{equation}
	\label{vh-bound}
	\| \nabla v_h(\cdot,t) \|_{L^{\infty}(\Ga_h[\bfx(t)])} \le K, \qquad 0\le t \le T.
\end{equation}
With $\bfe=\bfx(t)-\bfx(s)=\int_s^t \bfv(r)\,dr$, %Lemmas~\ref{lemma:matrix differences} and~\ref{lemma:theta-independence} 
the bounds \eqref{matrix difference bounds}
then yield the following bounds, which were first shown in \cite[Lemma~4.1]{DziukLubichMansour_rksurf} for surfaces:
for $0\le s, t \le T$ with $K|t-s| \le \tfrac14$, 
we have with $C=c K$
\begin{equation}
	\label{matrix difference bounds-t}
	\begin{aligned}
		\bfw^T \bigl(\bfM_{\bar\Om}(\bfx(t))  - \bfM_{\bar\Om}(\bfx(s))\bigr)\bfz \leq&\ C \, |t-s| \, \|\bfw\|_{\bfM_{\bar\Om}(\bfx(t))}\|\bfz\|_{\bfM_{\bar\Om}(\bfx(t))} , \\[1mm]
		\bfw^T \bigl(\bfA_{\bar\Om}(\bfx(t))  - \bfA_{\bar\Om}(\bfx(s))\bigr)\bfz \leq&\ C\,  |t-s| \, \|\bfw\|_{\bfA_{\bar\Om}(\bfx(t))}\|\bfz\|_{\bfA_{\bar\Om}(\bfx(t))}.   
	\end{aligned}
\end{equation}
Letting $s\to t$, this implies the bounds stated in \cite[Lemma~4.6]{KLLP2017} for surfaces 
and  \cite[Lemma 5.4]{Edelmann_harmonicvelo} for domains:
\begin{equation}
	\label{matrix derivatives}
	\begin{aligned}
		\bfw^T \frac\d{\d t}\bfM_{\bar\Om}(\bfx(t))  \bfz \leq&\ C  \,\|\bfw\|_{\bfM_{\bar\Om}(\bfx(t))}\|\bfz\|_{\bfM_{\bar\Om}(\bfx(t))}  \\[1mm]
		\bfw^T \frac\d{\d t}\bfA_{\bar\Om}(\bfx(t))  \bfz \leq&\ C \, \|\bfw\|_{\bfA_{\bar\Om}(\bfx(t))}\|\bfz\|_{\bfA_{\bar\Om}(\bfx(t))} 
	\end{aligned}
\end{equation}
and analogously for surfaces.
Patching together finitely many intervals over which $K|t-s| \le \tfrac14$, we obtain the following:
\begin{equation}
	\label{norm-equiv-t}
	\begin{aligned}
		&\text{The norms $\|\cdot\|_{\bfM_{\bar\Om}(\bfx(t))}$ are $h$-uniformly equivalent for $0\le t \le T$,}
		\\
		&\text{and so are the norms $\|\cdot\|_{\bfA_{\bar\Om}(\bfx(t))}$,}
	\end{aligned}
\end{equation}
and analogously for surfaces.

\subsection{Defects and errors for the coupled system}
\label{section:defects and errors}

We consider finite element functions $x_h^*$, $u_h^*$, $v_h^* = (v_{\Ga,h}^*,v_{\Om,h}^*)$, and $w_h^* = (\nu_h^*,H_h^*)$, with corresponding nodal vectors $\us$, $\xs$, $\vs = (\vs_\Ga;\vs_\Om)$, and $\ws = (\ns;\Hs)$, which are \emph{suitable} finite element projections of the corresponding exact solution $X$, $u$, $v = (v_\Ga,v_\Om)$, and $w = (\nu,H)$, respectively. The precise definition of these interpolations and Ritz-type projections will be given in Section~\ref{section:consistency - projections}, where we will also prove that the obtained defects (i.e.~consistency errors) are $O(h^k)$ in the appropriate norms.
% defined as follows: 
%The vectors $\xs$ and $\vs$ collect the nodal values of the interpolations of the exact solutions $X$ and $v$. 
%The vectors $\us$ and $\ps$ collect the nodal values of the suitable Ritz maps of the exact solutions $u$ and $p$ of the saddle point problem. 
%Similarly, the vectors $\ns$ and $\Hs$ also collect the nodal values of the Ritz maps of the exact geometric variables $\nu$ and $H$. The precise definitions of these Ritz maps will be given later in Section~\ref{section:consistency}.
%
%
%\bbk [kb: These choices may need to be revized based on the consistency analysis: (i) separate, more suitable definitions for $v_{\Ga,h}^*$ and $v_{\Om,h}^*$; (ii) $u_h^*$ and $p_h^*$ will be determined by consistency analysis; (iii) maybe a modified Ritz map is needed.] \ebk

For each variable the corresponding error vector will be denoted by a subscript, e.g.~for $X$ the error between the semi-discrete solution $\bfx$ and the finite element interpolation of the exact solution $\xs$ is defined by
\begin{equation*}
	\ex = \bfx - \xs ,
\end{equation*}
which collects the nodal values of the finite element function $e_x$. The errors $\ev,\en,\eH,\eu$ -- and the corresponding finite element functions -- are defined analogously.

\subsubsection{Error equation for the generalized Robin boundary value problem}
The nodal vectors  $\bfu^*, \bfx^*, \bfv^*,\bfn^*,\bfH^*$ of the projected exact solution are inserted into the first equation of the numerical scheme \eqref{eq:matrix-vector formulation - full system - a} and  yield the defect $\bfd_\bfu$ defined by
\begin{equation}
	\label{robin-defect}
	\bfL(\bfx^*) \bfu^* = \bffu(\xs,\bfH^*) + \bfM_{\bar\Om}(\bfx^*) \bfd_\bfu \,.
\end{equation}
%We denote the error nodal vectors by $\bfe_\bfu = \bfu-\bfu^*$, $\bfe_\bfp = \bfp-\bfp^*$. 
Subtracting \eqref{robin-defect} from the first equation of \eqref{eq:matrix-vector formulation - full system}, we obtain that the errors $\eu$ satisfy the error equation
\begin{align}
	\label{err-u}
	\bfL(\bfx) \bfe_\bfu =&\  \bigl( \bffu(\bfx,\bfH) - \bffu(\bfx^*,\bfH^*) \bigr)    \nonumber
	\\
	&\ - \bigl(\bfL(\bfx)-\bfL(\bfx^*) \bigr)\bfu^* \\
	&\ - \bfM_{\bar\Om}(\bfx^*) \bfd_\bfu \,. \nonumber
\end{align}

%\bcl
%CL: In der Fehlergleichung f\"ur $\bfe_\bfu$ ist auf der rechten Seite der Term\\
%$- \ \bigl( \bfA_{\bar\Om}(\bfx) - \bfA_{\bar\Om}(\xs) \bigr) \bfu^*$.
%Der Term erlaubt es wie \"ublich, eine $H^1(\Om)$-Schranke f\"ur �$e_h^u$ �zu erhalten, aber ich sehe nicht, wie wir damit eine $H^1(\Ga)$-Schranke f\"ur� $\gamma e_h^u$� bekommen. F\"ur diesen Bulk-Term hilft uns die ganze sch\"one Proposition 4.1 nichts.
%Die einzige M\"oglichkeit, die ich sehe, ist, dass man im Randwertproblem f\"ur $u$ doch noch einen Laplace-Beltrami Regularisierungsterm einf\"ugt (was wir nicht wollen!):
%$$\partial_\nu u + \aalpha\, u - \kappa \Delta_\Ga u = Q + \beta H.$$
%\ecl

\subsubsection{Error equations for the forced mean curvature flow}

When the nodal vectors of the projected exact solutions $\bfu^*, \bfx^*, \bfv^*,\bfH^*,\bfn^*$ are inserted into the numerical scheme \eqref{eq:matrix-vector formulation - full system - b}--\eqref{eq:matrix-vector formulation - full system - d}, they yield defects $\bfd_\bfn, \bfd_\bfH$ and $\bfd_{\bfv_\Ga}$ defined by
\begin{equation}
	\label{eq:defects - forced MCF}
	\begin{aligned}
		\bfM_\Ga(\bfx^*) \dot{\bfn}^* + \bfA_\Ga(\bfx^*) \bfn^* &= \bff_\nu(\bfx^*,\bfn^*) - \bfD_\Ga(\bfx^*) \bftr\bfu^* + \bfM_\Ga(\bfx^*) \bfd_\bfn \,,
		\\
		\bfM_\Ga(\bfx^*) \dot{\bfH}^* + \bfA_\Ga(\bfx^*) \bfH^* &=  \bff_H(\bfx^*,\bfn^*,-\bfH^*+\bfu^*) + \bfA_\Ga(\bfx^*) \bftr\bfu^* +\bfM_\Ga(\bfx^*) \bfd_\bfH \,,
		\\
		\Vs &=  - \Hs + \us
		\\
		\bfv_\Ga^{*} &= \Vs \bullet \ns + \bfd_{\bfv_\Ga} .
	\end{aligned}	
\end{equation}

As before, we subtract \eqref{eq:defects - forced MCF} from \eqref{mcf-mv} and \eqref{v-Gamma-mv}, and obtain that the errors $\bfe_\bfn$ and $\bfe_\bfH$ satisfy the error equations
\begin{align}
	%	\bfK_\Ga^{[3]}(\bfx) \bfe_{\bfv_\Ga} = &\ - \big( \bfK_\Ga^{[3]}(\bfx)-\bfK_\Ga^{[3]}(\bfx^*) \big) \bfv_\Ga^* \\
	%	&\ + \big( \bfg(\bfx,\bfw,\bfu) - \bfg(\bfx^*,\bfw^*,\bfu^*) \big) - \bfM_\Ga^{[3]}(\bfx^*) \bfd_{\bfv_\Ga} \,,\\
	\label{err-n}
	\bfM_\Ga(\bfx) \dot{\bfe}_\bfn + \bfA_\Ga(\bfx) \bfe_\bfn = &\ -\big( \bfM_\Ga(\bfx)-\bfM_\Ga(\bfx^*) \big) \dot{\bfn}^* 
	\nonumber \\
	&\ 
	- \big( \bfA_\Ga(\bfx) - \bfA_\Ga(\bfx^*) \big) \bfn^* \nonumber \\
	&\ + \big( \bff_\nu(\bfx,\bfn) - \bff_\nu(\bfx^*,\bfn^*) \big) \nonumber
	\\
	&\ - \bfD_\Ga(\bfx^*) \bftr{\bfe}_\bfu - \bigl(  \bfD_\Ga(\bfx) - \bfD_\Ga(\bfx^*) \big) \bftr\bfu 
	\\ 
	&\ - \bfM_\Ga(\bfx^*) \bfd_\bfn , \nonumber
	\\
	\bfM_\Ga(\bfx) \dot{\bfe}_\bfH + \bfA_\Ga(\bfx) \bfe_\bfH = &\ -\big( \bfM_\Ga(\bfx)-\bfM_\Ga(\bfx^*) \big) \dot{\bfH}^* \nonumber \\
	&\ - \big( \bfA_\Ga(\bfx) - \bfA_\Ga(\bfx^*) \big) \bfH^* \nonumber \\
	&\ + \big( \bff_H(\bfx,\bfn,-\bfH+\bftr\bfu) - \bff_H(\bfx^*,\bfn^*,-\bfH^*+\bftr\bfu^*) \big)  \nonumber \\
	&\ + \bfA_\Ga(\bfx^*) \bftr {\bfe}_\bfu + \bigl(  \bfA_\Ga(\bfx) - \bfA_\Ga(\bfx^*) \big) \bftr\bfu \label{err-H} \\
	&\ - \bfM_\Ga(\bfx^*) \bfd_\bfH \,, \nonumber 
\end{align}
and the error $\bfe_{\bfv_\Ga}$ is obtained from
\begin{equation}
	\label{err-v-Gamma}
	\bfe_{\bfv_\Ga} = \bfV \bullet \bfn - \Vs \bullet \ns - \bfd_{\bfv_\Ga} .
\end{equation}
Apart from the coupling terms with $u$ included here, stable error propagation has been shown in \citep{MCF}. In the following we will therefore concentrate on the critical coupling terms involving $\bfe_\bfu$ and  %the less critical terms with 
$\bfu$.

\subsubsection{Error equations for the harmonic velocity extension}
The interpolated exact solution $\vs = (\vs_\Ga;\vs_\Om)$ satisfies the numerical scheme \eqref{eq:matrix-vector formulation - full system - f} up to a defect $\bfd_{\bfv_\Om}$ defined by
\begin{align}
	\label{eq:defects - harmonic extension}
	\bfA_{\Om\Om}(\bfx^*) \bfv_\Om^* &= - \bfA_{\Om\Ga}(\bfx^*) \bfv_\Ga^* + \bfM_{\Om\Om}(\bfx^*) \bfd_{\bfv_\Om} \,.
\end{align}
Subtracting this equation from \eqref{harmonic-v-mv}, we obtain the error equation

%
%The numerical scheme can be rewritten as 
%\begin{align}\label{eq:harmonic extension rewritten}
%	\bfA_{\Om\Om}(\bfx^*) \bfv_\Om = &- \left( \bfA_{\Om\Om}(\bfx)-\bfA_{\Om\Om}(\bfx^*) \right) \bfe_{\bfv_\Om} \\
%	&- \left( \bfA_{\Om\Om}(\bfx)-\bfA_{\Om\Om}(\bfx^*) \right)\bfv_\Om^* - \bfA_{\Om\Ga}(\bfx) \bfv_\Ga
%\end{align}
%By subtraction \eqref{eq:defects - harmonic extension} from \eqref{eq:harmonic extension rewritten} we obtain that the error $\bfe_{\bfv_\Om}$ satisfies the error equation
%\begin{equation}
%\label{erroreq:harmonic extension}
%\begin{aligned}
%	\bfA_{\Om\Om}(\bfx^*) \bfe_{\bfv_\Om} = &-\left(\bfA_{\Om\Om}(\bfx)-\bfA_{\Om\Om}(\bfx^*) \right) \bfe_{\bfv_\Om} \\
%	&-\left(\bfA_{\Om\Om}(\bfx)-\bfA_{\Om\Om}(\bfx^*) \right) \bfv_\Om^* \\
%	&-\left(\bfA_{\Om\Ga}(\bfx) \bfv_\Ga - \bfA_{\Om\Ga}(\bfx^*) \bfv_\Ga^* \right) - \bfM_{\Om\Om}(\bfx^*) \bfd_{\bfv_\Om} \,.
%\end{aligned}
%\end{equation}
%We rewrite
%\begin{align}
%\begin{aligned}
%	&-\left( \bfA_{\Om\Ga}(\bfx) \bfv_\Ga - \bfA_{\Om\Ga}(\bfx^*) \right) \bfv_\Ga^* \\
%	=&-\left( \bfA_{\Om\Ga}(\bfx)-\bfA_{\Om\Ga}(\bfx^*) \right) \bfe_{\bfv_\Ga} - \left( \bfA_{\Om\Ga}(\bfx)-\bfA_{\Om\Ga}(\bfx^*) \right) \bfv_\Ga^* - \bfA_{\Om\Ga}(\bfx^*) \bfe_{\bfv_\Ga}
%\end{aligned}
%\end{align}
%This yields the error equation
$$%\begin{equation}
%\label{err-v-Omega-0}
\begin{aligned}
	\bfA_{\Om\Om}(\bfx^*) \bfe_{\bfv_\Om} + \bfA_{\Om\Ga}(\bfx^*)\bfe_{\bfv_\Ga} 
	&= -\left(\bfA_{\Om\Om}(\bfx)-\bfA_{\Om\Om}(\bfx^*) \right) \bfe_{\bfv_\Om} 
	-\left(\bfA_{\Om\Om}(\bfx)-\bfA_{\Om\Om}(\bfx^*) \right) \bfv_\Om^* \\
	&\quad\, -\left(\bfA_{\Om\Ga}(\bfx)-\bfA_{\Om\Ga}(\bfx^*) \right) \bfe_{\bfv_\Ga} 
	-\left(\bfA_{\Om\Ga}(\bfx)-\bfA_{\Om\Ga}(\bfx^*) \right) \bfv_\Ga^* \\
	&\quad\, - \bfM_{\Om\Om}(\bfx^*) \bfd_{\bfv_\Om} \,.
\end{aligned}
$$%\end{equation}
With $\bfe_\bfv=(\bfe_{\bfv_\Ga};\bfe_{\bfv_\Om})$ and $\bfd_{\bfv}=(\bfzero;\bfd_{\bfv_\Om})$, this equation simplifies to
\begin{equation}
	\label{err-v-Omega}
	\begin{aligned}
		\begin{pmatrix} 0 & 0 \\ 0 & \mathbf{I}_\Om \end{pmatrix}
		\bfA_{\bar\Om}(\xs) \bfe_\bfv = -
		\begin{pmatrix} 0 & 0 \\ 0 & \mathbf{I}_\Om \end{pmatrix}
		&\Bigl(  \left(\bfA_{\bar\Om}(\bfx)-\bfA_{\bar\Om}(\bfx^*) \right) \bfe_{\bfv} \Bigr.\\
		&\Bigl. + \left(\bfA_{\bar\Om}(\bfx)-\bfA_{\bar\Om}(\bfx^*) \right) \bfv^*\\
		&+ \bfM_{\bar\Om}(\bfx^*) \bfd_{\bfv} \Bigr).
	\end{aligned}
\end{equation}

\subsection{Stability bound}

We bound the errors at time $t$ in terms of the defects up to time $t$ and the errors in the initial values.
The errors are bounded in the $H^1$ norms on the surface $\Ga$ and, where relevant, in the bulk $\varOmega$.
For a nodal vector $\bfe$ associated with a surface finite element function $e \in \calS_h[\bfx]$, 
we define with the symmetric positive definite matrix $\bfK_{\Ga}(\bfx)=\bfA_{\Ga}(\bfx)+\bfM_{\Ga}(\bfx)$ 
$$
\| \bfe \|_{\bfK_{\Ga}(\bfx)}^2 = \bfe\T \bfK_{\Ga}(\bfx) \bfe = \| e \|_{H^1(\Gahx)}^2 \,,
$$
and similarly for $\bfe$ associated with a bulk finite element function $e \in \calV_h[\bfx]$, 
we define with the matrix $\bfK_{\bar\Om}(\bfx)=\bfA_{\bar\Om}(\bfx)+\bfM_{\bar\Om}(\bfx)$ 
$$
\| \bfe \|_{\bfK_{\bar\Om}(\bfx)}^2 = \bfe\T \bfK_{\bar\Om}(\bfx) \bfe = \| e \|_{H^1(\Omhx)}^2 .
$$
With the matrix $\bfL(\bfx)=\bfA_{\bar\Om}(\bfx) + \bftr\T\bfA_\Ga(\bfx)\bftr+\bftr\T\bfM_\Ga(\bfx)\bftr$ of \eqref{L-matrix} we set
$$
\| \bfe \|_{\bfL(\bfx)}^2 = \bfe\T \bfL(\bfx) \bfe \sim \| e \|_{H^1(\Omhx)}^2 + \| \gamma u_h \|_{H^1(\Gahx)}^2 = \| e \|_{H^1(\Omhx,\Gahx)}^2,
$$
where the constant in the equivalence of norms is independent of $h$.
To bound the defect $\bfd_\bfv$, we further need  the following norm for a bulk nodal vector $\bfd\in\R^N$, which equals
%the norm given by
%\begin{equation*}
%	\|\bfd\|_{\star,\xs}^2 := \bfd^T \bfM_{\bar\Om}(\xs)\bfK_{\bar\Om}(\xs)\inv \bfM_{\bar\Om}(\xs) \bfd .
%\end{equation*}
%As in \cite[Eq.~(5.5)]{KLLP2017},  this equals
the dual norm for the corresponding finite element function $d\in \calV_h[\bfx]$:
% with  the vector of nodal values~$\bfd$:
$$
\|\bfd\|_{\star,\bfx} = \|d\|_{H_h\inv({\Om}_h[\bfx])} := 
\sup_{0\ne\varphi_h\in \calV_h^0[\bfx]} \frac{ \int_{{\Om}_h[\bfx]} d  \cdot\varphi_h } { \| \varphi_h \|_{H^1({\Om}_h[\bfx])} } \, .
$$
The following result provides the key stability estimate, which bounds the errors in terms of the defects and the initial errors.

\begin{proposition}
	\label{prop:stability} Assume that the reference finite element functions $x_h^*(\cdot,t)$, $v_h^*(\cdot,t)$, $u_h^*(\cdot,t),H_h^*(\cdot,t),\nu_h^*(\cdot,t)$ on the interpolated surface $\Ga_h[\xs(t)]$ and bulk $\Om_h[\xs(t)]$ have $W^{1,\infty}$ norms that are bounded independently of $h$, uniformly for all $t\in[0,T]$. 
	Let
	\begin{equation}
		\label{eq:stability - defect bounds}
		\begin{aligned}
			\delta = \max_{0\le t \le T} \Bigl( &\| \bfd_\bfu(t) \|_{\bfM_{\bar\Om}(\xs(t))} + \| \dot \bfd_\bfu(t) \|_{\bfM_{\bar\Om}(\xs(t))} 
%			\\
%			&
			+ \| \bfd_\bfH(t) \|_{\bfM_{\Ga}(\xs(t))} 
			\\ &
			+ \| \bfd_\bfn(t) \|_{\bfM_{\Ga}(\xs(t))} 
%			\\
%			&
			+ \| \bfd_{\bfv_\Ga}(t) \|_{\bfK_{\Ga}(\xs(t))} +
			\| \bfd_\bfv(t) \|_{\star,\xs(t)} \Bigr)
		\end{aligned}
	\end{equation}
	be a bound of the defects and 
	\begin{equation}
		\label{eq:stability - initial errors}
		\eps^0 =   \| \bfe_\bfH(0) \|_{\bfK_\Ga(\bfx^0)} + \| \bfe_\bfn(0) \|_{\bfK_\Ga(\bfx^0)}
	\end{equation}
	be a bound of the errors of the initial values.
	If the defects and initial errors are bounded by 
	\begin{equation}
		\label{eq:assumed defect bounds}
		\delta + \eps^0 \le ch^\kappa \quad\text{ for some $\kappa$ with $\frac{3}{2} < \kappa \leq k$,}
	\end{equation}
	then there exists  $\bar h>0$ such that the following stability bound holds for all $h\leq \bar h$ and $0\le t \le T$:
	\begin{equation}
		\label{eq:stability bound}
		\begin{aligned}
			& 	  \hspace{-10pt}
			\|\eu(t)\|_{\bfL(\xs(t))} +  \|\ex(t)\|_{\bfL(\xs(t))} +  \|\ev(t)\|_{\bfL(\xs(t))}
			\\ &+
			\normKt{\bfe_\bfH(t)} + \normKt{\bfe_\bfn(t)} 
			\leq 
			C ( \delta + \eps^0 ),
		\end{aligned}
	\end{equation}
	where $C$ is independent of $h$ and $t$, but depends on the final time $T$. 
\end{proposition}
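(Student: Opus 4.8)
The plan is to prove \eqref{eq:stability bound} by an energy estimate, turned into a Gronwall inequality, run on a maximal subinterval together with a bootstrap argument. Let $t^\ast\in(0,T]$ be the largest time up to which the errors satisfy the a priori smallness bound
\[
\|e_x(\cdot,t)\|_{W^{1,\infty}(\Om_h[\xs])} + \|e_u(\cdot,t)\|_{W^{1,\infty}(\Om_h[\xs])} + \|e_\nu(\cdot,t)\|_{W^{1,\infty}(\Ga_h[\xs])} + \|e_H(\cdot,t)\|_{W^{1,\infty}(\Ga_h[\xs])} \le h^{(\kappa-3/2)/2} ;
\]
for $h$ small this interval is nonempty since $\bfe_\bfx(0)=\bfzero$ and the other initial errors are $O(\eps^0+\delta)$. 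On $[0,t^\ast]$ all the auxiliary results of Subsection~\ref{subsec:aux} — the matrix-difference identities of Lemmas~\ref{lemma:matrix differences} and \ref{lemma:theta-independence}, the $h$-uniform norm equivalences \eqref{norm-equiv}--\eqref{norm-equiv-t}, and the bounds \eqref{matrix difference bounds}, \eqref{matrix derivatives} — apply to the pair $(\bfx,\xs)$, and combined with the assumed $W^{1,\infty}$-boundedness of the reference functions they also give $h$-uniform $W^{1,\infty}$-bounds for $u_h,\nu_h,H_h,v_h$. The target is to establish \eqref{eq:stability bound} on $[0,t^\ast]$ with a constant depending only on $T$ and the reference-solution bounds; since $\kappa>3/2$, inverse estimates then upgrade \eqref{eq:stability bound} (using $\delta+\eps^0\le ch^\kappa$) to $W^{1,\infty}$-error bounds $\le Ch^{\kappa-3/2}$, strictly smaller than $h^{(\kappa-3/2)/2}$ for $h$ small, which by a continuity argument forces $t^\ast=T$.

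The first ingredient is an \emph{elliptic} bound for the pressure error. Testing \eqref{err-u} with $\bfe_\bfu$ and using the $h$-uniform coercivity of $\bfL(\bfx)$ — which controls the full $\|\gamma_h e_u\|_{H^1(\Ga_h[\bfx])}$, \emph{not merely} its $L^2$-part, precisely because of the regularization term $\bftr\T\bfA_\Ga(\bfx)\bftr$ in \eqref{L-matrix} — and estimating the right-hand side by \eqref{matrix difference bounds} together with the $W^{1,\infty}$-bound on $u_h^\ast$, gives
\[
\|\bfe_\bfu\|_{\bfL(\bfx)} \le C\bigl(\|\bfe_\bfH\|_{\bfM_\Ga(\bfx)} + \|\bfe_\bfx\|_{\bfL(\bfx)} + \|\bfd_\bfu\|_{\bfM_{\bar\Om}(\xs)}\bigr).
\]
Differentiating \eqref{err-u} in time, using $\dot\bfx=\bfv$, $\dot\xs=\vs$ and \eqref{matrix derivatives}, and testing with $\dot\bfe_\bfu$ yields in the same way
\[
\|\dot\bfe_\bfu\|_{\bfL(\bfx)} \le C\bigl(\|\dot\bfe_\bfH\|_{\bfM_\Ga(\bfx)} + \|\bfe_\bfH\|_{\bfM_\Ga(\bfx)} + \|\bfe_\bfv\|_{\bfL(\bfx)} + \|\bfe_\bfx\|_{\bfL(\bfx)} + \|\bfd_\bfu\|_{\bfM_{\bar\Om}(\xs)} + \|\dot\bfd_\bfu\|_{\bfM_{\bar\Om}(\xs)}\bigr),
\]
which is where $\|\dot\bfd_\bfu\|$ must enter the defect quantity \eqref{eq:stability - defect bounds}.

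Next are the \emph{parabolic} estimates for the geometric errors: I test \eqref{err-n} with $\dot\bfe_\bfn$ and \eqref{err-H} with $\dot\bfe_\bfH$. All terms already present in pure mean curvature flow — the mass/stiffness matrix-difference terms (including those of the form $(\bfA_\Ga(\bfx)-\bfA_\Ga(\xs))\bftr\bfu$ and $(\bfD_\Ga(\bfx)-\bfD_\Ga(\xs))\bftr\bfu$, handled with the $W^{1,\infty}$-bounds and, where needed, time-integration-by-parts as in \citep{MCF}), and the nonlinear contributions $\bff_\nu(\bfx,\bfn)-\bff_\nu(\xs,\ns)$, $\bff_H(\bfx,\bfn,\bfV)-\bff_H(\xs,\ns,\Vs)$ (the latter involving $\gamma_h e_u$ only through an \emph{undifferentiated} $V_h$, hence only in $L^2(\Ga_h)$) — are estimated exactly as in \citep{MCF} and lead, after integrating in time, to an inequality controlling $\tfrac12\|\bfe_\bfn(t)\|_{\bfA_\Ga}^2+\tfrac12\|\bfe_\bfH(t)\|_{\bfA_\Ga}^2+\int_0^t(\|\dot\bfe_\bfn\|_{\bfM_\Ga}^2+\|\dot\bfe_\bfH\|_{\bfM_\Ga}^2)$ by lower-order terms plus the genuinely new coupling contributions. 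The first-order coupling $-\bfD_\Ga(\xs)\bftr\bfe_\bfu$ in \eqref{err-n} is harmless: against $\dot\bfe_\bfn$ it is $\le\eps\|\dot\bfe_\bfn\|_{\bfM_\Ga}^2+C_\eps\|\bfe_\bfu\|_{\bfL(\bfx)}^2$, absorbed on the left and fed to Gronwall through the elliptic bound. The delicate term is the \emph{second-order} coupling $+\bfA_\Ga(\xs)\bftr\bfe_\bfu$ in \eqref{err-H}: since $\nabla_\Ga\dot e_H$ is not controlled it cannot be absorbed directly, so I integrate it in time and then by parts in time,
\[
\int_0^t\bfA_\Ga(\xs)\bftr\bfe_\bfu\cdot\dot\bfe_\bfH
= \Bigl[\bfA_\Ga(\xs)\bftr\bfe_\bfu\cdot\bfe_\bfH\Bigr]_0^t
- \int_0^t\Bigl(\diff\bfA_\Ga(\xs)\Bigr)\bftr\bfe_\bfu\cdot\bfe_\bfH
- \int_0^t\bfA_\Ga(\xs)\bftr\dot\bfe_\bfu\cdot\bfe_\bfH .
\]
The endpoint term is $\le\eps\|\bfe_\bfH(t)\|_{\bfA_\Ga}^2+C_\eps\|\bfe_\bfu(t)\|_{\bfL(\bfx)}^2$, the first part absorbed into the surface energy, the second handled via the elliptic bound; the middle integral is $\le C\int_0^t(\|\bfe_\bfu\|_{\bfL(\bfx)}^2+\|\bfe_\bfH\|_{\bfA_\Ga}^2)$ by \eqref{matrix derivatives}; and the last integral is $\le\eps\int_0^t\|\dot\bfe_\bfu\|_{\bfL(\bfx)}^2+C_\eps\int_0^t\|\bfe_\bfH\|_{\bfA_\Ga}^2$, where the $\dot\bfe_\bfu$ term is bounded by the differentiated elliptic estimate and, for $\eps$ small, its $\int_0^t\|\dot\bfe_\bfH\|_{\bfM_\Ga}^2$ contribution is reabsorbed by the corresponding term on the left-hand side of the energy inequality. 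Getting this interlocking bookkeeping between $\dot\bfe_\bfu$ and $\dot\bfe_\bfH$ to close is the crux of the argument, and it is exactly here that the regularization (making $\|\gamma_h e_u\|_{H^1(\Ga)}$ available) and the extra defect $\|\dot\bfd_\bfu\|$ are indispensable; cf.~the discussion announced for Subsection~\ref{subsec:why-not-EKS}.

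Finally the velocity and position errors: from the algebraic relation \eqref{err-v-Gamma}, a product rule for the componentwise product (as in \citep{MCF}) and the $W^{1,\infty}$-bounds, $\|\bfe_{\bfv_\Ga}\|_{\bfK_\Ga}\le C(\|\bfe_\bfH\|_{\bfK_\Ga}+\|\bfe_\bfn\|_{\bfK_\Ga}+\|\bfe_\bfu\|_{\bfL(\bfx)}+\|\bfd_{\bfv_\Ga}\|_{\bfK_\Ga})$; the error equation \eqref{err-v-Omega} is elliptic in the interior velocity error with Dirichlet data $\bfe_{\bfv_\Ga}$, and testing with the zero-trace vector $(\bfzero;\bfe_{\bfv_\Om})$, using coercivity of $\bfA_{\Om\Om}(\xs)$, the matrix-difference bounds and the \emph{dual}-norm estimate of $\bfd_\bfv$ (admissible because the test function has vanishing trace — which is why only $\|\bfd_\bfv\|_{\star,\xs}$ is needed in \eqref{eq:stability - defect bounds}), following \citep{Edelmann_harmonicvelo}, gives $\|\bfe_\bfv\|_{\bfL(\bfx)}\le C(\|\bfe_{\bfv_\Ga}\|_{\bfK_\Ga}+\|\bfe_\bfx\|_{\bfL(\bfx)}+\|\bfd_\bfv\|_{\star,\xs})$; since $\dot\bfe_\bfx=\bfe_\bfv$ nodally, \eqref{matrix derivatives} gives $\diff\|\bfe_\bfx\|_{\bfL(\xs)}\le C\|\bfe_\bfx\|_{\bfL(\xs)}+\|\bfe_\bfv\|_{\bfL(\bfx)}$. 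Adding the surface energy inequality (with the critical term treated as above), the two elliptic bounds, the velocity bounds and the position bound, and choosing all the $\eps$'s small enough to absorb the indicated terms, produces an inequality of the form
\[
E(t) + \int_0^t\bigl(\|\dot\bfe_\bfn\|_{\bfM_\Ga}^2+\|\dot\bfe_\bfH\|_{\bfM_\Ga}^2\bigr)
\le C(\eps^0)^2 + C\delta^2 + C\int_0^t E(s)\,\d s ,
\]
with $E(t):=\|\bfe_\bfn(t)\|_{\bfK_\Ga}^2+\|\bfe_\bfH(t)\|_{\bfK_\Ga}^2+\|\bfe_\bfx(t)\|_{\bfL(\xs)}^2$ and $\|\bfe_\bfu(t)\|_{\bfL}^2+\|\bfe_\bfv(t)\|_{\bfL}^2\le C(E(t)+\delta^2)$ pointwise (the $\bfM_\Ga$-part of $E$ being recovered from the $\bfA_\Ga$-energy plus the $\int_0^t\|\dot\bfe_\bfH\|_{\bfM_\Ga}^2,\int_0^t\|\dot\bfe_\bfn\|_{\bfM_\Ga}^2$ terms; $\bfe_\bfx(0)=\bfzero$; the initial errors of $\bfe_\bfn,\bfe_\bfH$ give the $\eps^0$-term; and $\|\bfe_\bfu(0)\|_{\bfL}\le C(\eps^0+\delta)$ by the elliptic bound at $t=0$). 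Gronwall's inequality then yields $E(t)\le C(\delta+\eps^0)^2$ on $[0,t^\ast]$ with $C=C(T)$, hence \eqref{eq:stability bound}, and the bootstrap closes as in the first paragraph, so $t^\ast=T$. I expect the single real obstacle to be exactly the second-order bulk-to-surface term $\bfA_\Ga(\xs)\bftr\bfe_\bfu$ in \eqref{err-H} — originating from $\Delta_\Ga u$ in \eqref{eq:forced mcf-H} — and making the coupled estimates for $\dot\bfe_\bfu$ and $\dot\bfe_\bfH$ close with compatible small constants; everything else is either a standard parabolic/elliptic energy estimate or is essentially inherited from \citep{MCF} and \citep{Edelmann_harmonicvelo}.
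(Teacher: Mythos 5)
Your overall architecture is the paper's: the maximal-time bootstrap with $W^{1,\infty}$ a priori bounds closed by inverse inequalities (your uniform exponent $h^{(\kappa-3/2)/2}$ instead of the paper's $h^{(\kappa-1)/2}$ for the surface quantities is harmless since $\kappa>1/2$), the elliptic estimate for $\bfe_\bfu$ and its time-differentiated version \eqref{dot-eu-bound} (which is exactly why $\dot\bfd_\bfu$ enters $\delta$), the energy estimates obtained by testing \eqref{err-n} and \eqref{err-H} with $\dot\bfe_\bfn$ and $\dot\bfe_\bfH$, and, crucially, the treatment of the second-order coupling term $\bfA_\Ga(\xs)\bftr\bfe_\bfu\cdot\dot\bfe_\bfH$ by writing it as a total time derivative minus $\bigl(\tfrac{\d}{\d t}\bfA_\Ga(\xs)\bigr)$- and $\dot\bfe_\bfu$-terms, absorbing the resulting $\|\dot\bfe_\bfH\|_{\bfM_\Ga}$ contribution — this is verbatim the paper's key manoeuvre, and your identification of it as the crux (and of the role of the regularization) is exactly the paper's Subsection~\ref{subsec:why-not-EKS} point. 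The combination via Gronwall and the final bootstrap closure also match.

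There is, however, one genuine gap: the discrete harmonic velocity extension. Testing \eqref{err-v-Omega} with the zero-trace vector $(\bfzero;\bfe_{\bfv_\Om})$ and using coercivity of $\bfA_{\Om\Om}(\xs)$ controls only the interior part; to conclude $\| \bfe_\bfv \|_{\bfK_{\bar\Om}(\xs)} \le c \bigl( \| \bfe_{\bfv_\Ga} \|_{\bfK_\Ga(\xs)} + \| \bfe_\bfx \|_{\bfA_{\bar\Om}(\xs)} + \| \bfd_\bfv \|_{\star,\xs} \bigr)$ as in \eqref{ev-bound} you must bound, uniformly in $h$, the $H^1(\Om_h[\xs])$ norm of some discrete extension of the Dirichlet data $e_{v_\Ga}$ by a boundary norm of $e_{v_\Ga}$. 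The nodal zero-extension implicit in your splitting does not do this: its gradient scales like $h^{-1/2}\|e_{v_\Ga}\|_{L^2(\Gah[\xs])}$, so the cross term $\bfA_{\Om\Ga}(\xs)\bfe_{\bfv_\Ga}$ costs a factor $h^{-1/2}$ and destroys the optimal-order bound \eqref{eq:stability bound}. What is needed is precisely the $h$-uniform stability of the discrete Dirichlet problem with respect to its boundary data, $\|u_h\|_{H^1(\Om_h)}\le C\|g_h\|_{H^{1/2}(\Ga_h)}$, which is the paper's Proposition~\ref{prop:Dirichlet-h}, proved in the appendix via the isoparametric Scott--Zhang interpolation of the continuous harmonic extension; the paper emphasizes that this result was not available in the literature for higher-order isoparametric elements, so it cannot simply be absorbed into "standard finite element theory" or delegated to \citep{Edelmann_harmonicvelo}. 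With that proposition inserted at this step, your argument coincides with the paper's proof.
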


\subsection{Proof of Proposition~\ref{prop:stability}}

\subsubsection{Preliminaries}

Let $t^*\le T$ be the maximal time such that the following inequalities hold true:
\begin{equation}
	%\label{eq:assumed bounds}
	\label{ass-err-linf}
	\begin{aligned}
		\|e_x(\cdot,t)\|_{W^{1,\infty}(\Om_h[\xs(t)])} \leq &\ h^{(\kappa-3/2)/2}  \\
		\|e_v(\cdot,t)\|_{W^{1,\infty}(\Om_h[\xs(t)])} \leq &\ h^{(\kappa-3/2)/2}  \\
		\|e_u(\cdot,t)\|_{W^{1,\infty}(\Om_h[\xs(t)])} \leq &\ h^{(\kappa-3/2)/2} \\
		\|e_H(\cdot,t)\|_{W^{1,\infty}(\Ga_h[\xs(t)])} \leq &\ h^{(\kappa-1)/2}  \\
		\|e_n(\cdot,t)\|_{W^{1,\infty}(\Ga_h[\xs(t)])} \leq &\ h^{(\kappa-1)/2} 
	\end{aligned} \qquad \textrm{ for } \quad t\in[0,t^*].
\end{equation}
%	Note that $t^*>0$ since initially $e_x(\cdot,0)=0$ 
%	and, by an inverse inequality, we have 
%	\begin{align*}
	%	\|e_u(\cdot,0)\|_{W^{1,\infty}(\Ga_h[\xs(0)])} &\le ch^{-1}\|e_u(\cdot,0)\|_{H^{1}(\Ga_h[\xs(0)])} 
	%	\\
	%	&= ch^{-1} \normKt{\eu(0)}
	%	\le Ch^{\kappa-1},
	%	\end{align*}
%	where the last inequality holds by assumption~\eqref{eq:assumed initial errors}. By the same argument, also 
%	$\|e_w(\cdot,0)\|_{W^{1,\infty}(\Ga_h[\xs(0)])} \le Ch^{\kappa-1}$ 
%	and, as a result of $v_h = \widetilde I_h (V_h\nu_h)$, 
%	\begin{align*}
	%	&\|e_v(\cdot,0)\|_{W^{1,\infty}(\Ga_h[\xs(0)])} \\
	%	&\le c\|e_u(\cdot,0)\|_{W^{1,\infty}(\Ga_h[\xs(0)])} +c \|e_w(\cdot,0)\|_{W^{1,\infty}(\Ga_h[\xs(0)])}  
	%	\le Ch^{\kappa-1} .
	%	\end{align*}
%	By continuity, we then obtain for sufficiently small $h$ that $t^*>0$ (which {\it a priori} might depend on $h$). 
We first prove the stated error bounds for $0\leq t \leq t^*$. At the end of the proof we will show that $t^*$ actually coincides with $T$.

% W^{1,$\infty$} bounds
Since the reference finite element functions $x_h^*(\cdot,t)$, $v_h^*(\cdot,t)$, $u_h^*(\cdot,t)$ and $H_h^*(\cdot,t)$, $\nu_h^*(\cdot,t)$ on the interpolated bulk $\bar\varOmega_h[\xs(t)]$ and surface $\Ga_h[\xs(t)]$, respectively, have $W^{1,\infty}$ norms that are bounded independently of $h$ for all $t\in[0,T]$,
the bounds~\eqref{ass-err-linf} together with Lemma~\ref{lemma:theta-independence}  imply that the $W^{1,\infty}$ norms of the bulk--surface finite element functions $x_h(\cdot,t),v_h(\cdot,t),u_h(\cdot,t)$ and $H_h(\cdot,t)$,  $\nu_h(\cdot,t)$  on $\bar\varOmega_h[\bfx(t)]$ and $\Ga_h[\bfx(t)]$, respectively, are also bounded independently of $h$ and $t\in[0,t^*]$, and so are their lifts to $\bar\varOmega_h[\xs(t)]$ and $\Ga_h[\xs(t)]$. 
%	By a similar argument, see (A.iii) in the proof of \cite[Proposition~7.1]{MCF}, the assumed smoothness of the time derivatives of the exact surface functions (e.g.~$\mat u, \pa^{(2)} u$, $\mat w$) and interpolation estimates, yield the boundedness of the time derivatives of their interpolations ($\mat_h u_h^*, \pa^{(2)}_h u_h^*$, $\mat_h w_h^*$).
%	In particular,  it will be important that the discrete velocity $v_h$ with nodal vector $\bfv$ satisfies \eqref{vh-bound}.

The estimate on the position errors $e_x$ in \eqref{ass-err-linf} and the $W^{1,\infty}$ bound on $v_h$ immediately imply that the results of Section~\ref{subsec:aux} apply. In particular, due to the bounds in \eqref{ass-err-linf} (for a sufficiently small $h \leq h_0$), the main condition of Lemma~\ref{lemma:theta-independence} and also \eqref{e-theta} is satisfied (with $e_h^\theta=e_x^\theta$). Hence the $h$-uniform norm equivalences and  estimates in \eqref{matrix difference bounds} hold between the surfaces defined by $\bfx$ and $\xs$. Similarly, again due to \eqref{ass-err-linf}, the bound \eqref{vh-bound} also holds, and hence the estimates in \eqref{matrix difference bounds-t}, \eqref{matrix derivatives} and the $h$-uniform norm equivalences in time \eqref{norm-equiv-t} also hold. 
%	When we refer to these results from Section~\ref{subsec:aux} in the stability proof below, following the above argument, we always mean that their respective smallness assumptions are satisfied via \eqref{ass-err-linf}, but we will not repeat this argument at each instance.

In the following, $c$ and $C$ denote generic constants that take different values on different occurrences. In contrast, constants with a subscript (such as $c_0$) will play a distinctive role in the proof and will not change their value between appearances.

\subsubsection{Error bound for the generalized Robin boundary value problem}
\label{subsubsec:err-u-bounds}
We test \eqref{err-u} with $\bfe_\bfu$ and obtain
\begin{align}
	\label{err-u-test}
	\bfe_\bfu\T \bfL(\bfx) \bfe_\bfu =&\  \bfe_\bfu\T \bigl( \bffu(\bfx,\bfH) - \bffu(\bfx^*,\bfH^*) \bigr)    \nonumber
	\\
	& - \  \bfe_\bfu\T \bigl(\bfL(\bfx)-\bfL(\bfx^*) \bigr)\bfu^* 
	\\
	& -\  \bfe_\bfu\T\bfM_{\bar\Om}(\bfx^*) \bfd_\bfu \,. \nonumber
\end{align}
We begin with estimating the first term on the right-hand side. We introduce finite element functions on the intermediate surface 
$\Ga_h^\theta=\Ga[\bfx^\theta]$ with $\bfx^\theta=\xs+\theta \bfe_\bfx$ $(0\le \theta\le 1)$ that are defined by
\begin{align*}
	H_h^\theta &= \sum_{i=1}^{N_\Ga} (H_i^* + \theta (H_i-H_i^*)) \varphi_i[\xs+\theta \bfe_\bfx] ,
	\\
	Q_h^\theta &= \sum_{i=1}^{N_\Ga} (Q_i^* + \theta (Q_i-Q_i^*)) \varphi_i[\xs+\theta \bfe_\bfx] ,
\end{align*}
where $Q_i^*$ and $Q_i$ are the values of the source term $Q$ at $x_i^*$ and $x_i$, respectively. Their nodal vectors are denoted
$\bfH^\theta =(H_i^\theta)$ and $\bfQ^\theta =(Q_i^\theta)$.
We let $e_u^\theta$ be the finite element function on $\varOmega_h^\theta=\varOmega[\bfx^\theta]$ with nodal vector $\bfe_\bfu$, and we set $e_u=e_u^0$ for later use. We then have 
\begin{align*}
	I := \bfe_\bfu\T \bigl( \bffu(\bfx,\bfH) - \bffu(\bfx^*,\bfH^*) \bigr) 
	= &\ \int_0^1  \frac{d}{d\theta} \Bigl( \bfe_\bfu\T  \bffu(\bfx^\theta ,\bfH^\theta)\Bigr) \, d\theta
	\\
	= &\ \int_0^1 \frac{\d}{\d\theta} \biggl( -\int_{\Om_h^\theta} e_u^\theta  +
	\int_{\Ga_h^\theta} e_u^\theta (H_h^\theta+Q_h^\theta) \biggr) \d\theta.
\end{align*}
By the Leibniz formula and using that $\mat_\theta e_u^\theta=0$ by the transport property, this becomes
\begin{align*}
	I = \int_0^1  \biggl( -\int_{\Om_h^\theta} e_u^\theta  (\nb \cdot e_x^\theta) +
	\int_{\Ga_h^\theta} e_u^\theta (H_h^\theta+Q_h^\theta) (\nb \cdot e_x^\theta) 
	+ \int_{\Ga_h^\theta} e_u^\theta (\mat_\theta H_h^\theta+ \mat_\theta Q_h^\theta)  \biggr) \d\theta,
\end{align*}
which we bound with the Cauchy--Schwarz inequality, noting further that 
$\partial_\theta^\bullet H_h^\theta = e_H^\theta$ and $\partial_\theta^\bullet Q_h^\theta = e_Q^\theta$,
\begin{align*}
	I \le &\int_0^1  \biggl( \| e_u^\theta\|_{L^2(\Om_h^\theta)}  \, \| \nb \cdot e_x^\theta \| _{L^2(\Om_h^\theta)}  
	+ \| e_u^\theta \|_{L^2(\Ga_h^\theta)}  \,    \| H_h^\theta+Q_h^\theta \|_{L^\infty(\Ga_h^\theta)}  \,  \| \nb \cdot e_x^\theta \|_{L^2(\Ga_h^\theta)}  
	\\
	& \quad 
	+ \| e_u^\theta \|_{L^2(\Ga_h^\theta)}  \, \bigl( \| e_H^\theta + e_Q^\theta \|_{L^2(\Ga_h^\theta)}\bigr) \biggr) \d\theta.
\end{align*}
By Lemma~\ref{lemma:theta-independence} and condition~\eqref{ass-err-linf}, which yields that $H_h^\theta$ is bounded in the maximum norm by a constant independent of $h$ and $\theta$,
this integral can be further bounded by
\begin{align*}
	I \le  &\ \| e_u \|_{L^2(\Om_h[\xs])}  \, \| e_x \| _{H^1(\Om_h[\xs])}
	+ c \| e_u \|_{L^2(\Ga_h[\xs])}   \Bigl( \| e_x \|_{H^1(\Ga_h[\xs])}   + \| e_H + e_Q \| _{L^2(\Om_h[\xs])} \Bigr).
\end{align*}
The second term (denoted II) on the right-hand side of \eqref{err-u-test} 
can be estimated with Lemma~\ref{lemma:matrix differences}
(cf. \cite[Lemma~4.1]{KLLP2017} for the surface terms and \cite[Lemma~5.1]{Edelmann_harmonicvelo} for the bulk term). 
We obtain
$$%\begin{align*}
II 
\le c \| e_u \|_{H^1(\Om_h[\xs],\Ga_h[\xs])} \,  \| e_x\|_{H^1(\Om_h[\xs],\Ga_h[\xs])} \| u_h^* \|_{W^{1,\infty}(\Omh[\xs],\Gah[\xs])} \,.
$$%\end{align*}
Finally, we have 
$$
III = - \bfe_\bfu\T\bfM_{\bar\Om}(\xs) \bfd_\bfu = \int_{\Om_h[\xs]} e_u  d_u \le 
c \| e_u \|_{L^2(\Om_h[\xs])} \cdot  \| d_u \|_{L^2(\Om_h[\xs])},
$$
so that altogether
\begin{align*}
	\nonumber
	\| \bfe_\bfu \|_{\bfL(\xs)} & =
	\| e_u \|_{H^1(\Om_h[\xs],\Ga_h[\xs])}  
	\\
	& \le c \Bigl(  \| e_x \|_{H^1(\Om_h[\xs],\Ga_h[\xs])}  +  \| e_H + e_Q \| _{L^2(\Ga_h[\xs])} 
	+ \| d_u \|_{L^2(\Om_h[\xs])}\Bigr)
	\nonumber
	\\
	& = c \Bigl(  \| \bfe_\bfx \|_{\bfL(\xs)}  +  \| \bfe_\bfH + \bfe_\bfQ \| _{\bfM_\Ga(\xs))} 
	+ \| \bfd_\bfu \|_{\bfM_{\bar\Om}(\xs)}\Bigr).
	\label{eu-bound}
\end{align*}
Using that  $\| \bfe_\bfQ \| _{\bfM_\Ga(\xs))} \le c \| \bfe_\bfx  \| _{\bfM_\Ga(\xs))}  \le c \| \bfe_\bfx \|_{\bfL(\xs)} $,
this becomes
\begin{equation}
	\| \bfe_\bfu \|_{\bfL(\xs)} 
	\le
	c \Bigl(  \| \bfe_\bfx \|_{\bfL(\xs)}  +  \| \bfe_\bfH \| _{\bfM_\Ga(\xs))} 
	+ \| \bfd_\bfu \|_{\bfM_{\bar\Om}(\xs)}\Bigr).
	\label{eu-bound}
\end{equation}

In addition to this bound of $e_u$, we also need a bound for its material time derivative $\mat e_u$. To this end we differentiate the error equation \eqref{err-u} with respect to time and test with $\dot \bfe_\bfu$. Using the same arguments as in bounding $\bfe_\bfu$ before, together with the bound 
$\| \dot \bfe_\bfQ \| _{\bfM_\Ga(\xs)} \le c \bigl(\| \bfe_\bfx  \| _{\bfM_\Ga(\xs)} + \| \bfe_{\bfv_\Ga}  \|_{\bfM_\Ga(\xs)} \bigr)$,
we find
\begin{align} \label{dot-eu-bound}
	%\| \mat e_u \|_{H^1(\Om_h[\xs],\Ga_h[\xs])} \le c &\Bigl( \| e_u \|_{H^1(\Om_h[\xs],\Ga_h[\xs])}  + \| e_x \|_{H^1(\Om_h[\xs],\Ga_h[\xs])} \Bigr.
	%\\
	%&\Bigl.+ \| e_H + e_Q \|_{L^2(\Ga_h[\xs])} +  \| \mat e_H + \mat e_Q \|_{L^2(\Ga_h[\xs])}\Bigr).
	\| \dot \bfe_\bfu \|_{\bfL(\xs)} \le c &\Bigl( \| \bfe_\bfu \|_{\bfL(\xs)}  + \| \bfe_\bfx \|_{\bfL(\xs)} + \| \bfe_{\bfv_\Ga}  \|_{\bfM_\Ga(\xs)}\Bigr.
	\\ \nonumber
	&\Bigl.+  \| \bfe_\bfH \| _{\bfM_\Ga(\xs)} +   \| \dot \bfe_\bfH  \| _{\bfM_\Ga(\xs)} 
	+ \| \dot \bfd_\bfu \|_{\bfM_{\bar\Om}(\xs)}\Bigr).
\end{align}

\subsubsection{Error bound for the forced mean curvature flow}

%\bcl [CL: Der kritische Term stammt von $\Delta_\Ga u$ in der Evolutionsgleichung f\"ur $H$, der in der Fehlergleichung f\"ur $H$ auf einen Term 
%$A_\Ga(x^*)e_u\ $ f\"uhrt (ich lasse aus Faulheit alle boldface weg und nutze somit die M\"oglichkeiten von Latex nicht aus :-)). Wir testen mit $\dot e_H$, das wir nur in der $M$-Norm absch\"atzen k\"onnen, wogegen $e_H$ in der $A$-Norm abgesch\"atzt werden kann. Wie in unserer MCF-Arbeit f\"ur den Term $\dot e_H^T (A(x)-A(x^*)) u^*$ schreiben wir 
%$$
%\dot e_H^T A(x^*)e_u = \diff (e_H^T A(x^*) e_u ) - e_H^T \dot A(x^*) e_u - e_H^T A \dot e_u.
%$$
%Problematisch ist dann h\"ochstens der letzte Term. Aus dem elliptischen Rand\-wert\-problem f\"ur $u$ erhalten wir (so meine ich)
%$$
%\| e_u \|_{A_\Ga} \le C \| e_H \|_{M_\Ga} +  \dots\,,
%$$
%wobei die Punkte eine passende Norm von $e_x$ bezeichnen.
%Ich vermute, dass wir durch Differentiation nach $t$ der elliptischen Gleichung auch
%$$
%\| \dot e_u \|_{A_\Ga} \le C \| \dot e_H \|_{M_\Ga} +  \dots\,
%$$
%zeigen k\"onnen, wobei die Punkte harmlosere Terme mit Normen von $e_v,e_x,e_H$ etc.~darstellen. Falls das richtig ist, sollte die Fehleranalyse auch f\"ur das nicht regularisierte Problem funktionieren.]
%\ecl

We test the error equation \eqref{err-H} for $\bfe_\bfH$ with $\dot \bfe_\bfH$. On the left-hand side we obtain
\begin{align*}
	\dot\bfe_\bfH \T \bfM_\Ga(\bfx) \dot\bfe_\bfH + \dot\bfe_\bfH \T \bfA_\Ga(\bfx) \bfe_\bfH
	= \|\dot\bfe_\bfH\|_{\bfM_\Ga(\bfx)}^2 + 
	\frac12 \,\diff\, \|\bfe_\bfH\|_{\bfA_\Ga(\bfx)}^2 - \frac12\, \bfe_\bfH\T \Bigl(\diff \bfA(\bfx)\Bigr) \bfe_\bfH.
\end{align*}
On the right-hand side, the resulting terms that do not depend on $u$ are estimated in exactly the same way as in \cite[Section 7]{MCF}, and for the extra terms we estimate as follows. The most critical term is $\dot \bfe_\bfH^T \bfA_\Ga(\bfx^*)\bftr\bfe_\bfu$, 
whose direct estimation with the Cauchy--Schwarz inequality would contain the $\bfA_\Ga(\bfx^*)$-norm of $\dot\bfe_\bfH$, which cannot be suitably bounded.
Instead we rewrite
$$
\dot \bfe_\bfH^T \bfA_\Ga(\bfx^*)\bftr\bfe_\bfu = \diff (\bfe_\bfH^T \bfA_\Ga(x^*) \bftr\bfe_u ) 
- \bfe_\bfH^T  \biggl(\diff\bfA_\Ga(\bfx^*) \biggr)\bftr \bfe_\bfu 
- \bfe_H^T \bfA_\Ga(\bfx^*)\bftr \dot \bfe_\bfu.
$$
For the last term with $\dot \bfe_\bfu$, after bounding via the Cauchy--Schwarz inequality and Young's inequality with a small $\rho>0$,
\begin{align*}
	\bfe_\bfH^T \bfA_\Ga(\bfx^*)\bftr \dot \bfe_\bfu 
	\leq &\ 
	\| \bfe_\bfH \|_{\bfA_\Ga(\bfx^*)}  \| \bftr \dot\bfe_\bfu \|_{\bfA_\Ga(\bfx^*)} 
	\\
	\leq &\ 
	\| \bfe_\bfH \|_{\bfK_\Ga(\bfx^*)}  \| \dot\bfe_\bfu\|_{\bfL(\bfx^*)} 
	\le
	\tfrac12 \rho^{-1} \| \bfe_\bfH \|_{\bfK_\Ga(\bfx^*)}^2 + \tfrac12 \rho  \| \dot\bfe_\bfu \|_{\bfL(\bfx^*)}^2,
\end{align*}
we use the bound \eqref{dot-eu-bound} for $\| \dot\bfe_\bfu \|_{\bfL(\bfx^*)}$. We note that only the $\bfM_\Ga$-norm of $\dot \bfe_\bfH$ appears in this bound, and $\|\dot \bfe_\bfH\|_{\bfM_\Ga(\bfx^*)}^2$ times a small constant factor can be absorbed in the corresponding term on the left-hand side, since the $\bfM_\Ga$-norms at $\bfx$ and $\bfx^*$ are equivalent uniformly in $h$.

The term $ \bfe_\bfH^T \bigl(  \bfA_\Ga(\bfx) - \bfA_\Ga(\bfx^*) \big) \bftr\bfu$ is of the same type as $\bfe_\bfH^T  
\bigl(  \bfA_\Ga(\bfx) - \bfA_\Ga(\bfx^*) \big) \bfH^*$, which was estimated in \cite[after (7.27)]{MCF}. Here, the same proof and the use of condition \eqref{ass-err-linf} for $\bfu=\bfu^*+\bfe_\bfu$ yield the bound
\begin{align*}
	\dot\bfe_\bfH^T \bigl(  \bfA_\Ga(\bfx) - \bfA_\Ga(\bfx^*) \bigr) \bftr\bfu \,\le\,
	& \diff \Bigl(\bfe_\bfH^T \bigl(  \bfA_\Ga(\bfx) - \bfA_\Ga(\bfx^*) \bigr) \bftr\bfu\Bigr) 
%	\\
%	& 
	+ c \,\| \bfe_\bfH \|_{\bfA_\Ga(\bfx^*)} \bigl(  \| \bfe_\bfx \|_{\bfK_\Ga(\bfx^*)} + \| \bfe_\bfv \|_{\bfK_\Ga(\bfx^*)} \bigr).
\end{align*}
Proceeding as in \cite[after (7.30)]{MCF}, which includes an integration in time, we obtain a bound for $\bfe_\bfH$, cf.~\cite[(7.33)]{MCF},
\begin{align} \label{eH-bound}
	\begin{aligned}
		\normKt{\bfe_\bfH(t)}^2  \leq &\
		c \int_0^t \Bigl( \|\ex(s)\|_{\bfL(\xs(s))}^2 + \normKs{\bfe_{\bfv_\Ga}(s)}^2 
%		\\
%		&\ \hphantom{ c \int_0^t \Big(  } 
		+ \normKs{\bfe_\bfn(s)}^2 + \normKs{\bfe_\bfH(s)}^2 \Bigr) \d s 
		\\
		&\ + c \,\normKt{\ex(t)}^2 +  c\,\normKo{\bfe_\bfH(0)}^2 
		%		\\
		%		&\ + c \int_0^t \Bigl( \normMs{\bfe_\bfQ(s)}^2 + \normMs{\dot \bfe_\bfQ(s)}^2\Bigr)  \d s  
		\\
		&\ + c \int_0^t \Bigl( \normMs{\bfd_\bfH(s)}^2 		
%		\\
%		&\ \hphantom{+ c \int_0^t \Big( } 
		+ \| \bfd_\bfu(s)\|_{\bfM_{\bar\varOmega}(\xs(s))}^2+ 
		\| \dot \bfd_\bfu(s)\|_{\bfM_{\bar\varOmega}(\xs(s))}^2 \Bigr)  \d s  .
	\end{aligned}
\end{align}
Testing the error equation \eqref{err-n} for $\bfe_\bfn$ with $\dot\bfe_\bfn$, we obtain in
the same way, but now with terms that can be directly estimated,  the bound for $\bfe_\bfn$,
\begin{align} \label{en-bound}
	\begin{aligned}
		\normKt{\bfe_\bfn(t)}^2  
		\leq &\
		c \int_0^t \Bigl( \|\ex(s)\|_{\bfL(\xs(s))}^2 + \normKs{\bfe_{\bfv_\Ga}(s)}^2 
%		\\
%		&\ \hphantom{c \int_0^t \Big(  } 
		+ \normKs{\bfe_\bfn(s)}^2 + \normKs{\bfe_\bfH(s)}^2 \Bigr) \d s 
		\\
		&\ + c \,\normKt{\ex(t)}^2 +  c\,\normKo{\bfe_\bfn(0)}^2 
		%				\\
		%		& \hspace{15mm}+ c \int_0^t  \normMs{\bfe_\bfQ(s)}^2  \d s  
		\\
		&\ + c \int_0^t \Bigl( \normMs{\bfd_\bfn(s)}^2  + \| \bfd_\bfu(s)\|_{\bfM_{\bar\varOmega}(\xs(s))}^2\Bigr)  \d s  .
	\end{aligned}
\end{align}
%\subsubsection{Error bound for the boundary velocity}
Noting $V=-H+u$ and using Lemma 5.3 of \citep{KoLL21} in \eqref{err-v-Gamma} in the same way as in deriving the velocity error bound (5.44) of \citep{KoLL21}, we obtain the following error bound for the boundary velocity:
\begin{align}
	\| \bfe_{\bfv_\Ga} (t) \|_{\bfK_\Ga(\xs(t))} &\le  c \Bigl( \| \bfe_{\bfu} (t) \|_{\bfK_\Ga(\xs(t))} +
	\| \bfe_{\bfH} (t) \|_{\bfK_\Ga(\xs(t))} + \| \bfe_{\bfn} (t) \|_{\bfK_\Ga(\xs(t))} \Bigr) 
	\label{ev-Gamma-bound}
%	&\quad\ 
	+ \| \bfd_{\bfv_\Ga} (t) \|_{\bfK_\Ga(\xs(t))}.
\end{align}
We have $\dot\bfe_\bfx=\bfe_\bfv$ and $\bfe_\bfx(0)=0$, and hence
\begin{equation}
	\label{ex-Gamma-bound}
	\| \bfe_{\bfx_\Ga}(t) \|_{\bfK_\Ga(\bfx^*(t))} \le \int_0^t \| \bfe_{\bfv_\Ga}(s) \|_{\bfK_\Ga(\bfx^*(s))}\, \d s.
\end{equation}

\subsubsection{Error bound for the discrete harmonic velocity extension}

The error equation \eqref{err-v-Omega} is the matrix--vector formulation of a discretized Dirichlet problem:
Given $e_{v_\Ga} \in  \calS_h[\xs]$,
find $e_v\in \calV_h[\xs]$ with $\gamma_h e_v = e_{v_\Ga} $ such that
\begin{equation}
	\int_{\varOmega_h[\xs]} \nb e_v \cdot \nb \varphi_h = \ell(\varphi_h) \qquad\text{for all}\, \varphi_h \in \calV_h^0[\xs],
\end{equation}
where $\ell$ denotes the linear form on $\calV_h^0[\xs]$ that maps $\varphi_h \in \calV_h^0[\xs]$ with nodal vector $\bfphi$ to
\begin{equation}
	\label{ell}
	\begin{aligned}
		\ell(\varphi_h) = 
		%	&-\bfphi\T\left(\bfA_{\Om\Om}(\bfx)-\bfA_{\Om\Om}(\bfx^*) \right) \bfe_{\bfv_\Om} \\
		%	&-\bfphi\T\left(\bfA_{\Om\Om}(\bfx)-\bfA_{\Om\Om}(\bfx^*) \right) \bfv_\Om^* \\
		%	&-\bfphi\T\left(\bfA_{\Om\Ga}(\bfx)-\bfA_{\Om\Ga}(\bfx^*) \right) \bfe_{\bfv_\Ga} \\
		%	&-\bfphi\T\left(\bfA_{\Om\Ga}(\bfx)-\bfA_{\Om\Ga}(\bfx^*) \right) \bfv_\Ga^*- \bfphi\T\bfM_{\Om\Om}(\bfx^*) \bfd_{\bfv_\Om} \\
		&-\bfphi\T \left( \bfA_{\bar\Om}(\bfx)-\bfA_{\bar\Om}(\bfx^*) \right) \bfe_\bfv \\
		&-\bfphi\T \left( \bfA_{\bar\Om}(\bfx)-\bfA_{\bar\Om}(\bfx^*) \right) \bfv^* \\
		&-\bfphi\T \bfM_{\bar\Om}(\bfx^*) \dv. 
	\end{aligned}
\end{equation}
Proposition \ref{prop:Dirichlet-h} (concerning the Dirichlet data $e_{v_\Ga}$) and standard finite element theory (concerning the functional $\ell$) give us the bound 
\begin{equation}
	\begin{aligned}\label{erroreq:harmonic extension dual estimate}
		\| e_v \|_{H^1(\Omh[\bfx^*])} \le \| \ell \|_{H_h^{-1}(\Om_h[\xs])} + \| e_{v_\Ga} \|_{H^{1/2}(\Gah[\bfx^*])} \,,
	\end{aligned}
\end{equation}
where
\[ 
\| \ell \|_{H_h^{-1}(\Om_h[\xs])} = \sup_{0 \ne \varphi_h \in \calV_h^0[\bfx^*]} \frac{\ell(\varphi_h)}{\| \varphi_h \|_{H^1(\Omh[\bfx^*])}} .
\]
%For $\varphi_h \in \calV_h^0$ we denote the nodal vector by $\bfphi$. Note that each $\varphi_h \in \calV_h^0$ can also be considered as function in $V_h$ with nodal vector $(0,\bfphi) \in \R^N$, which we denote again by $\bfphi$. In this way, we can write 
%\begin{equation}
%\begin{aligned}
%	l(\varphi_h) = &-\bfphi\T\left(\bfA_{\Om\Om}(\bfx)-\bfA_{\Om\Om}(\bfx^*) \right) \bfe_{\bfv_\Om} \\
%	&-\bfphi\T\left(\bfA_{\Om\Om}(\bfx)-\bfA_{\Om\Om}(\bfx^*) \right) \bfv_\Om^* \\
%	&-\bfphi\T\left(\bfA_{\Om\Ga}(\bfx)-\bfA_{\Om\Ga}(\bfx^*) \right) \bfe_{\bfv_\Ga} \\
%	&-\bfphi\T\left(\bfA_{\Om\Ga}(\bfx)-\bfA_{\Om\Ga}(\bfx^*) \right) \bfv_\Ga^*- \bfphi\T\bfM_{\Om\Om}(\bfx^*) \bfd_{\bfv_\Om} \\
%	=&-\bfphi\T \left( \bfA(\bfx)-\bfA(\bfx^*) \right) \bfe_\bfv \\
%	&-\bfphi\T \left( \bfA(\bfx)-\bfA(\bfx^*) \right) \bfv^* \\
%	&-\bfphi\T \bfM(\bfx^*) \dv 
%\end{aligned}
%\end{equation}
%where $\dv = (0,\bfd_{\bfv_\Om}\T)\T$. 
We estimate the three terms of \eqref{ell} separately. The steps are analogous to previous estimates based on Section~\ref{subsec:aux}.
\\[1mm]
(i) For the first term we have, for sufficiently small $h$, using condition \eqref{ass-err-linf},
\begin{align*}
	\bfphi\T \left( \bfA_{\bar\Om}(\bfx)-\bfA_{\bar\Om}(\bfx^*) \right) \bfe_\bfv 
	&\le c\, \| \varphi_h \|_{H^1(\Omh[\bfx^*])} \| \nb e_x \|_{L^\infty(\Omh[\bfx^*])} \| e_v \|_{H^1(\Omh[\bfx^*])} \\
	&\le c \, h^{(\kappa-3/2)/2} \,\| \varphi_h \|_{H^1(\Omh[\bfx^*])}  \| e_v \|_{H^1(\Omh[\bfx^*])} \\
	&= c \, h^{(\kappa-3/2)/2} \,\| \varphi_h \|_{H^1(\Omh[\bfx^*])}  \| \bfe_\bfv \|_{\bfK_{\bar\Om}(\bfx^*)}.
\end{align*}
(ii) For the second term we obtain
\begin{align*}
	\bfphi\T \left( \bfA_{\bar\Om}(\bfx)-\bfA_{\bar\Om}(\bfx^*) \right) \bfv^* 
	&\le c \| \varphi_h \|_{H^1(\Omh[\bfx^*])} \| \nb e_x \|_{L^2(\Omh[\bfx^*])} \| v_h^* \|_{W^{1,\infty}(\Omh[\bfx^*])}.
\end{align*}
(iii) The third term is bounded by
\begin{align*}
	\bfphi\T\bfM_{\bar\Om}(\bfx^*) \dv \leq \| \varphi_h \|_{H^1(\Omh[\bfx^*])} \| \dv \|_{\star,\bfx^*} .
\end{align*}
%where we denote, with the finite element function $d_v$ on $\Omh[\bfx^*]$ that has the nodal vector $\dv$,
%$$
%\| \dv \|_{\star,\bfx^*} = \| d_v \|_{\star,\bfx^*}.
%$$ 
%Here, $d_v$ is identified with the functional $(\cdot,d_v)_{L^2(\Omh[\bfx^*])}$ on $\calV_h^0[\xs]$.
%\redon [kb: Previously we had the $\| \dv \|_{\bfM_{\bar\Om}(\bfx^*)}$ norm for the defect in $\bfv$. For the consistency estimates, we need the discrete dual norm on $\dv$!!] \redoff

Combining these estimates yields
\begin{align*}
	\| \ell \|_{H_h^{-1}(\Om_h[\xs])} \le c \left( h^{(\kappa-3/2)/2} \| \bfe_\bfv \|_{\bfK_{\bar\Om}(\bfx^*)} + \| \bfe_\bfx \|_{\bfA_{\bar\Om}(\bfx^*)} + \| \dv \|_{\star,\bfx^*} \right).
\end{align*}
For $h \le h_0$ sufficiently small we can absorb the error $\bfe_\bfv$ and insert the above estimate for $\ell$ in \eqref{erroreq:harmonic extension dual estimate} to obtain
\begin{equation}\label{ev-bound}
	\| \bfe_\bfv \|_{\bfK_{\bar\Om}(\bfx^*)} \le c \left( \| \bfe_{\bfv_\Ga} \|_{\bfK_\Ga(\bfx^*)} + \| \bfe_\bfx \|_{\bfA_{\bar\Om}(\bfx^*)} + \| \dv \|_{\star,\bfx^*} \right).
\end{equation}
We have $\dot\bfe_\bfx=\bfe_\bfv$ and $\bfe_\bfx(0)=0$, and hence
\begin{equation}
	\label{ex-bound}
	\| \bfe_\bfx(t) \|_{\bfK_{\bar\Om}(\bfx^*(t))} \le \int_0^t \| \bfe_\bfv(s) \|_{\bfK_{\bar\Om}(\bfx^*(s))}\, \d s.
\end{equation}

\subsubsection{Combining the error bounds} 
By now we have the coupled inequalities \eqref{eu-bound}, \eqref{eH-bound} and \eqref{en-bound}, \eqref{ev-Gamma-bound} and \eqref{ex-Gamma-bound}, \eqref{ev-bound} and \eqref{ex-bound} for the error vectors $\bfe_\bfu$, $\bfe_\bfH$ and $\bfe_\bfn$, $\bfe_{\bfv_\Ga}$ and $\bfe_{\bfx_\Ga}$, $\bfe_\bfv$ and $\bfe_\bfx$, respectively.

Combining the bounds \eqref{ex-Gamma-bound} and \eqref{ex-bound} for $\bfe_\bfx$, inserting the bounds \eqref{ev-Gamma-bound} and \eqref{ev-bound} for $\bfe_{\bfv_\Ga}$ and $\bfe_\bfv$ and noting that $\| \bfe_\bfx \|_{\bfL(\xs)}^2 = \| \bfe_\bfx \|_{\bfK_\Ga(\xs)}^2 + \| \bfe_\bfx \|_{\bfK_{\bar\Om}(\xs)}^2$, we obtain after using the Gronwall inequality to get rid of the term $\| \bfe_\bfx \|_{\bfL(\xs)}$ under the integral,
\begin{align}
	\| \bfe_\bfx(t) \|_{\bfL(\xs(t))} 
	\nonumber
	&\le c \int_0^t \Bigl(  \| \bfe_{\bfu} (s) \|_{\bfK_\Ga(\xs(s))} +
	\| \bfe_{\bfH} (s) \|_{\bfK_\Ga(\xs(s))} + \| \bfe_{\bfn} (s) \|_{\bfK_\Ga(\xs(s))} \Bigr) \d s\\
	&\quad + c \int_0^t \Bigl( \| \bfd_{\bfv_\Ga} (s) \|_{\bfK_\Ga(\xs(s))} + \| \dv(s) \|_{\star,\bfx^*(s)} \Bigr) \d s.
	\label{ex-L-bound}
\end{align}
Inserting this bound into \eqref{eu-bound} and using the Gronwall inequality to get rid of the term $\| \bfe_\bfu \|_{\bfL(\xs)}$ under the integral yields
\begin{align}
	\nonumber
	\| \bfe_\bfu(t) \|_{\bfL(\xs(t))} &\le  c \int_0^t \Bigl( \| \bfe_{\bfH} (s) \|_{\bfK_\Ga(\xs(s))} + \| \bfe_{\bfn} (s) \|_{\bfK_\Ga(\xs(s))} \Bigr) \d s
	\\
	&\quad + c\, \| \bfe_\bfH(t)  \| _{\bfM_\Ga(\xs(t)))}   %+ \| \bfe_\bfQ(t) \| _{\bfM_\Ga(\xs(t)))}  
	+ c\,\| \bfd_\bfu(t) \|_{\bfM_{\bar\Om}(\xs(t))} 
	\nonumber
	\\
	&\quad + c \int_0^t \Bigl( \| \bfd_{\bfv_\Ga} (s) \|_{\bfK_\Ga(\xs(s))} + \| \dv(s) \|_{\star,\bfx^*(s)} \Bigr) \d s.  
	\label{eu-bound-1}
\end{align}
Inserting  \eqref{ev-Gamma-bound}, \eqref{ex-L-bound} and \eqref{eu-bound-1} into \eqref{eH-bound} yields
\begin{align} \label{eH-bound-1}
	\begin{aligned}
		\normKt{\bfe_\bfH(t)}^2  \leq &\
		c \int_0^t \Bigl( \normKs{\bfe_\bfH(s)}^2 + \normKs{\bfe_\bfn(s)}^2 \Bigr) \d s 
		+  c\,\normKo{\bfe_\bfH(0)}^2 \\
		&\ + c \int_0^t \Bigl( \normMs{\bfd_\bfH(s)}^2 		
		 + \| \bfd_\bfu(s)\|_{\bfM_{\bar\varOmega}(\xs(s))}^2+ 
		\| \dot \bfd_\bfu(s)\|_{\bfM_{\bar\varOmega}(\xs(s))}^2 
		\\
		&\ \hphantom{+ c \int_0^t \Big( } + \| \bfd_{\bfv_\Ga} (s) \|_{\bfK_\Ga(\xs(s))}^2 + \| \dv(s) \|_{\star,\bfx^*(s)}^2 
		\Bigr)  \d s  .
	\end{aligned}
\end{align}
Similarly, inserting \eqref{ex-L-bound} and \eqref{ev-Gamma-bound} into \eqref{en-bound} yields
\begin{align} \label{en-bound-1}
	\begin{aligned}
		\normKt{\bfe_\bfn(t)}^2  \leq &\
		c \int_0^t \Bigl( %\| \bfe_{\bfu} (s) \|_{\bfK_\Ga(\xs(s))}^2
		% \\
		% & \hspace{35mm}  + 
		\normKs{\bfe_\bfH(s)}^2 + \normKs{\bfe_\bfn(s)}^2 + \normMs{\bfe_\bfQ(s)}^2 \Bigr) \d s 
%		\\
%		&\ 
		+   c\,\normKo{\bfe_\bfn(0)}^2 
%		+ c \int_0^t  \normMs{\bfe_\bfQ(s)}^2  \d s  
		\\
		&\ + c \int_0^t \Bigl( \normMs{\bfd_\bfn(s)}^2  + \| \bfd_\bfu(s)\|_{\bfM_{\bar\varOmega}(\xs(s))}^2
%		\\
%		&\ \hphantom{+ c \int_0^t \Big( } 
		+ \| \bfd_{\bfv_\Ga} (s) \|_{\bfK_\Ga(\xs(s))}^2 + \| \dv(s) \|_{\star,\bfx^*(s)}^2 
		\Bigr)  \d s  .
	\end{aligned}
\end{align}
Summing up \eqref{eH-bound-1} and \eqref{en-bound-1} and using the Gronwall inequality thus yields
\begin{align} \label{eH-en-bound}
	\begin{aligned}
		\normKt{\bfe_\bfH(t)}^2  + \normKt{\bfe_\bfn(t)}^2 
		&\  \le c \Bigl( \|\bfe_\bfH(0)\|_{\bfK_\Ga(\xs(0))}^2  + 
		\|\bfe_\bfn(0)\|_{\bfK_\Ga(\xs(0))}^2\Bigr) 
		%		\\
		%		&\ + c \int_0^t \Bigl( \normMs{\bfe_\bfQ(s)}^2 + \normMs{\dot \bfe_\bfQ(s)}^2\Bigr)  \d s  
		\\
		&\ + c \int_0^t \Bigl( \normMs{\bfd_\bfH(s)}^2 + \normMs{\bfd_\bfn(s)}^2  
		+ \| \bfd_\bfu(s)\|_{\bfM_{\bar\varOmega}(\xs(s))}^2
		\\
%		&\ \hphantom{+ c \int_0^t \Big( } 
%		\\
		&\ \hphantom{+ c \int_0^t \Big( } + \| \dot \bfd_\bfu(s)\|_{\bfM_{\bar\varOmega}(\xs(s))}^2
		+ \| \bfd_{\bfv_\Ga} (s) \|_{\bfK_\Ga(\xs(s))}^2 + \| \dv(s) \|_{\star,\bfx^*(s)}^2 
		\Bigr)  \d s  .
	\end{aligned}
\end{align}

Inserting this bound first into \eqref{eu-bound-1}, then both into \eqref{ex-L-bound} and finally that bound into \eqref{ev-Gamma-bound} and \eqref{ev-bound} yields the stated stability bound \eqref{eq:stability bound} on the interval $[0,t^*]$.

It remains to show that $t^* = T$ if $h$ is sufficiently small. 
We use the assumed defect bounds to obtain error bounds of order~$\kappa$:
\begin{align*}
	\|\ex(t)\|_{\bfL(\xs(t))} +  \|\ev(t)\|_{\bfL(\xs(t))} +  \|\eu(t)\|_{\bfL(\xs(t))}  
	+ \ \normKt{\bfe_\bfH(t)} + \normKt{\bfe_\bfn(t)} %\\[1mm]
	\leq  Ch^\kappa.
\end{align*}
By the inverse inequality \cite[Theorem~4.5.11]{BrennerScott}, we then have for $t\in[0,t^*]$ 
\begin{align*}
	& \|e_x(\cdot,t)\|_{W^{1,\infty}(\Om_h[\xs\t])}
	+ \|e_v(\cdot,t)\|_{W^{1,\infty}(\Om_h[\xs\t])} 
	+ \|e_u(\cdot,t)\|_{W^{1,\infty}(\Om_h[\xs\t])}
	\\
	&\leq  ch^{-3/2} \bigl(   \|\ex(t)\|_{\bfL(\xs(t))} +  \|\ev(t)\|_{\bfL(\xs(t))} +  \|\eu(t)\|_{\bfL(\xs(t))}  \bigr)
	\leq c \, C h^{\kappa-3/2} \leq \tfrac12 h^{(\kappa-3/2)/2} 
\end{align*}
and
\begin{align*}
	&\ \|e_H(\cdot,t)\|_{W^{1,\infty}(\Ga_h[\xs\t])}
	+ \|e_\nu(\cdot,t)\|_{W^{1,\infty}(\Ga_h[\xs\t])} 
	\\ 
	&\ \leq c h^{-1} \bigl( \normKt{\bfe_\bfH(t)} + \normKt{\bfe_\bfn(t)} \bigr) \leq c \,C h^{\kappa-1} \leq \tfrac12 h^{(\kappa-1)/2}         
\end{align*}
for sufficiently small $h$.
Hence we can extend the bounds \eqref{ass-err-linf} beyond $t^*$, which contradicts the maximality of $t^*$ unless $t^*=T$. Therefore, we have the stability bound \eqref{eq:stability bound} on the whole interval $[0,T]$. 
\qed

\subsection{Why not for the original Eyles--King--Styles model?}
\label{subsec:why-not-EKS}

We cannot prove stability of the numerical method in the case $\clb=0$, which corresponds to the model originally proposed in \citep{EKS19}. The difficulty is caused by the term $\bigl(\bfL(\bfx)-\bfL(\bfx^*) \bigr)\bfu^*$ in the error equation \eqref{err-u} for $\bfe_\bfu$, which in particular contains 
$\bigl(\bfA_{\bar\Om}(\bfx)-\bfA_{\bar\Om}(\bfx^*) \bigr)\bfu^*$. 
We see no way to bound the $L^2(\Om_h[\xs])$ norm (or the $H^{-1/2}$ norm) of the finite element function with this nodal vector  in terms of the $H^1(\Om_h[\xs],\Ga_h[\xs])$ norm of the position error~$e_x$. If such a bound had been available, then a result similar to Proposition~\ref{prop:robin-h} (with an additional $H^{-1/2}(\Om)$ forcing term in the bulk) would have allowed us to control the $H^1(\Ga_h[\xs])$ norm of the error function $e_u$, as is crucially required in the above proof.
As Subsection~\ref{subsubsec:err-u-bounds} shows, this obstacle does not appear with the generalized Robin boundary condition, for which the $H^1(\Ga_h[\xs])$ norm of the error function $e_u$ can be controlled directly by energy estimates.

\section{Consistency}
\label{section:consistency}

In this section we prove that the initial values and the defects appearing in the stability bound of Proposition~\ref{prop:stability} are indeed bounded as $O(h^k)$ in the appropriate norms that appear in the stability result. The defect estimates follow the ideas of \citep{DziukElliott_L2,KLLP2017,MCF,MCF_soldriven}.

\subsection{Finite element projections of the exact solution}

\subsubsection{Interpolations and their errors}

The vectors $\xs$ collect the nodal values of the interpolation of the exact solution $X$, i.e.~of $X_h^*(\cdot,t) = \Ih^\Om X(\cdot,t) \in \calV_h[\xls^0]^3$ , with its lift $I_h X = (\Ih^\Om X)^\ell$ being a function on the exact domain and its boundary. 

Similarly, the interpolation operator on the boundary is denoted by $I_h^\Ga = (\Ih^\Ga)^\ell$. For vector valued functions both interpolations are defined com\-po\-nent\-wise.

We recall that the interpolation operators satisfy the error bounds, cf.~\citep{ElliottRanner_bulksurface,Demlow2009,ElliottRanner_unified}, for any $w \in H^{k+1}(\Om)$ and any $z \in H^{k+1}(\Ga)$, for $1\leq j \leq k$ and $h \leq h_0$,
\begin{equation}
\label{eq:interpolation errors}
\begin{aligned}
	\|w - I_h^\Om w\|_{L^2(\Om)} + h \|w - I_h^\Om w\|_{H^1(\Om)} \leq &\ c h^{j+1} \|w\|_{H^{j+1}(\Om)} , \\
	\|z - I_h^\Ga z\|_{L^2(\Ga)} + h \|z - I_h^\Ga z\|_{H^1(\Ga)} \leq &\ c h^{j+1} \|z\|_{H^{j+1}(\Ga)} .
\end{aligned}
\end{equation}

\subsubsection{Denoting bilinear forms}

We briefly introduce a short yet general notation for bilinear forms on continuous and discrete domains. 
Let $\star$ denote any of the domains $\Om, \Ga$ or their interpolations $\Om_h^*, \Ga_h^*$, then
we define the following bilinear forms, for any $w,\vphi$ in the appropriate space,
\begin{align*}
m_\star(w,\phi) = \int_{\star} w \vphi \qquad \text{and} \qquad
a_\star(w,\phi) = \int_{\star} \nb_\star w \cdot \nb_\star \vphi ,
\end{align*}
i.e.~the (semi-)inner products on $L^2(\star)$ and $H^1(\star)$, respectively.

For brevity we further introduce the notation $\a_{\star} = a_{\star} + m_{\star}$ for the $H^1(\star)$-scalar product on a continuous or discrete domain $\star \in \{\Om, \Ga, \Om_h^*, \Ga_h^*\}$.

\subsubsection{A generalized surface Ritz map}

Recall that $\Om_h^*$ is the interpolation of the exact domain $\Om[X]$, whose boundary $\pa \Om_h^*$ is the nodal interpolation of the boundary $\Ga[X]$, i.e.~we have the relations $\Om_h^* = \Ih^\Om X$ and $\Ga_h^* = \Ih^\Ga X = \pa \Om_h^*$. 

We will now define a few different (generalized) Ritz maps, which will play a crucial role in the consistency analysis. Note that since the bulk and the surface are time dependent, all these Ritz maps are depending on time as well, hence, for instance, they do not commute with time derivatives.

%% our "standard" Ritz map on \Ga[X]
We start by defining a Ritz map on the boundary, cf.~\cite[Section~6]{highorderESFEM}, which will be used for the geometric variables $\nu$ and $H$ determining the nodal values of the vectors $\ns$ and $\Hs$. For any $w\in H^1(\Ga[X])$ we first determine $\Rh^\Ga w \in \calS_h[\xs]$ such that, for all $\vphi_h \in \calS_h[\xs]$,
\begin{equation}
\label{eq:Ritz map}
\a_{\Ga_h^*}(\Rh^\Ga w,\vphi_h) = \a_\Ga(w,(\vphi_h)^\ell) ,
%	\int_{\Ga_h[\xs]} \!\!\!\! \nb_{\Ga_h[\xs]} \Rh^\Ga w \cdot \nb_{\Ga_h[\xs]} \vphi_h + \int_{\Ga_h[\xs]} \!\!\!\! \Rh^\Ga w \vphi_h = \int_{\Ga[X]} \!\!\! \nb_{\Ga[X]} w \cdot \nb_{\Ga[X]} (\vphi_h)^\ell + \int_{\Ga[X]} \!\!\! w (\vphi_h)^\ell ,
\end{equation}
and then defining the Ritz map via the lift $R_h^\Ga w = (\Rh^\Ga w)^\ell$. For vector valued functions the Ritz map is defined componentwise.

The above Ritz map $R_h^\Ga$ satisfies the error estimates, see \cite[Theorem~6.3--6.4]{highorderESFEM}, for any $w \in H^{k+1}(\Ga)$ and for $1 \leq j \leq k$ and $h \leq h_0$,
\begin{subequations}
\label{eq:Ritz map errors}
\begin{align}
	\|w - R_h^\Ga w\|_{L^2(\Ga)} + h \|w - R_h^\Ga w\|_{H^1(\Ga)} \leq &\ c h^{j+1} \|w\|_{H^{j+1}(\Ga)} , \\
	\|\mat (w - R_h^\Ga w)\|_{L^2(\Ga)} + h \|\mat (w - R_h^\Ga w)\|_{H^1(\Ga)} \leq &\ c h^{j+1} \Big( \|w\|_{H^{j+1}(\Ga)} + \|\mat w\|_{H^{j+1}(\Ga)} \Big) .
\end{align}
\end{subequations}

\subsubsection{A generalized bulk Ritz map}

With the use of the interpolated bulk $\Om_h^* = \Ih^\Om X$, its boundary $\Ga_h^* = \Ih^\Ga X$, and the above defined boundary interpolation operators and the surface Ritz map \eqref{eq:Ritz map} we will define a new bulk Ritz map. 

The new bulk Ritz map will be defined via a Poisson problem with inhomogeneous Dirichlet boundary conditions using the Ritz map $\Rh^\Ga$ as boundary data. 
Using such a Ritz map, with the appropriate boundary data (i.e.~the boundary Ritz map of the trace), allows us to decouple certain bulk and surface terms during the consistency analysis.

For any $w \in H^1(\Om;\Ga)$ we define the bulk Ritz map $\Rh^{\textnormal{Ritz}} w$ such that it solves the following inhomogeneous Dirichlet problem using the \emph{surface Ritz map} $\Rh^\Ga (\ga w)$ as Dirichlet boundary values, for any $\vphi_h \in \calV_h^0[\xs]$,
\begin{equation}
\label{eq:Ritz map with b.c. - Ritz}
\begin{aligned}
	\a_{\Om_h^*}(\Rh^{\textnormal{Ritz}} w,\vphi_h) = &\ \a_\Om(w,\vphi_h^\ell) , \\
	\ga_h^* \Rh^{\textnormal{Ritz}} w = &\ \Rh^\Ga (\ga w) .
\end{aligned}
\end{equation}
As before, we set $R_h^{\textnormal{Ritz}} w = (\Rh^{\textnormal{Ritz}} w)^\ell$.

The following error estimates hold for the bulk Ritz map \eqref{eq:Ritz map with b.c. - Ritz}.
\begin{lemma}
\label{lemma:R^Ritz error estimates}
For the bulk Ritz map defined by \eqref{eq:Ritz map with b.c. - Ritz} we have the following bulk--surface error estimates. Using finite elements of polynomial degree at most $k$, for any $w \in H^{k+1}(\Om;\Ga)$, and for $j \leq k$ and $h \leq h_0$, we have
\begin{subequations}
	\label{eq:Ritz map errors in bulk }
	\begin{align}
		\|w - R_h^{\textnormal{Ritz}} w\|_{L^2(\Om;\Ga)} + h \|w - R_h^{\textnormal{Ritz}} w\|_{H^1(\Om;\Ga)} \leq &\ c h^{j+1} \|w\|_{H^{j+1}(\Om;\Ga)} \\
		\|\mat_\Om (w - R_h^\Ga w)\|_{L^2(\Om;\Ga)} + h \|\mat_\Om (w - R_h^\Ga w)\|_{H^1(\Om;\Ga)} \leq &\ c h^{j+1} \Big( \|w\|_{H^{j+1}(\Om;\Ga)} + \|\mat w\|_{H^{j+1}(\Om;\Ga)} \Big) .
	\end{align}
\end{subequations}
\end{lemma}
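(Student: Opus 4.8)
The plan is to prove the two stationary bounds ($L^2$ and $H^1$) first and then to obtain the material-derivative bounds by differentiating the defining relation \eqref{eq:Ritz map with b.c. - Ritz} in time and re-applying the stationary estimates to the perturbed elliptic problem that results; throughout, $h\le h_0$ with $h_0$ small enough that the lift $G_h$ of \eqref{eq:bulk mapping} and the surface lift are well defined. For the stationary $H^1$ bound I would first absorb the inhomogeneous Dirichlet data: let $\widetilde w_h\in\calV_h[\xs]$ be the finite element function whose interior nodal values agree with those of $\Ih^\Om w$ and whose trace equals $\ga_h^*\widetilde w_h=\Rh^\Ga(\ga w)$; concretely $\widetilde w_h=\Ih^\Om w+\mathcal E_h\big(\Rh^\Ga(\ga w)-\Ih^\Ga(\ga w)\big)$ with $\mathcal E_h$ an $H^1$-stable discrete extension of boundary data vanishing at interior nodes. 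Combining the interpolation bounds \eqref{eq:interpolation errors}, the stability of $\mathcal E_h$, the surface Ritz bound \eqref{eq:Ritz map errors} and the trace theorem gives $\|w-\widetilde w_h^\ell\|_{H^1(\Om;\Ga)}\le ch^{j}\|w\|_{H^{j+1}(\Om;\Ga)}$, its boundary part being controlled because $\ga(\widetilde w_h^\ell)=R_h^\Ga(\ga w)$. By construction $e_h:=\Rh^{\textnormal{Ritz}}w-\widetilde w_h\in\calV_h^0[\xs]$, so $\ga(e_h^\ell)=0$ and only $\|e_h\|_{H^1(\Om_h^*)}$ remains. Since the bilinear form in \eqref{eq:Ritz map with b.c. - Ritz} is the full $H^1$-inner product $\a=a+m$, testing with $e_h$ and using \eqref{eq:Ritz map with b.c. - Ritz} yields $\|e_h\|_{H^1(\Om_h^*)}^2=\a_\Om(w,e_h^\ell)-\a_{\Om_h^*}(\widetilde w_h,e_h)$; writing $\a_\Om(w,e_h^\ell)=\a_{\Om_h^*}(w^{-\ell},e_h)$ up to a $G_h$-geometric consistency error of size $ch^{k}\|w\|_{H^1(\Om)}\|e_h\|_{H^1(\Om_h^*)}$, and using $\|w^{-\ell}-\widetilde w_h\|_{H^1(\Om_h^*)}\le ch^j\|w\|_{H^{j+1}(\Om;\Ga)}$ together with $j\le k$, one obtains $\|e_h\|_{H^1(\Om_h^*)}\le ch^j\|w\|_{H^{j+1}(\Om;\Ga)}$, hence the $H^1$ half of the first estimate.

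For the stationary $L^2$ bound the surface part is immediate: $\ga(w-R_h^{\textnormal{Ritz}}w)=\ga w-R_h^\Ga(\ga w)$, whose $L^2(\Ga)$ norm is $O(h^{j+1}\|\ga w\|_{H^{j+1}(\Ga)})$ by \eqref{eq:Ritz map errors}. For the bulk part I would run an Aubin--Nitsche argument: let $z\in H^2(\Om)\cap H^1_0(\Om)$ solve $-\laplace z+z=e|_\Om$, $z|_\Ga=0$, with $e=w-R_h^{\textnormal{Ritz}}w$, so that $\|z\|_{H^2(\Om)}\le c\|e\|_{L^2(\Om)}$. Because the boundary nodes of $\Om_h^*$ lie on $\Ga$ and $z|_\Ga=0$, the interpolant $z_h:=\Ih^\Om(z^{-\ell})$ lies in $\calV_h^0[\xs]$. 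Integration by parts gives $\|e\|_{L^2(\Om)}^2=\a_\Om(e,z)-\int_\Ga(\ga e)\,\dnu z$, the boundary term being $O(h^{j+1}\|w\|_{H^{j+1}(\Om;\Ga)}\|e\|_{L^2(\Om)})$ by the $L^2(\Ga)$ bound just shown; splitting $z=z_h^\ell+(z-z_h^\ell)$, the non-Galerkin piece is $O(h^{j+1})$ by the $H^1$ bound and $\|z-z_h^\ell\|_{H^1(\Om)}\le ch\|z\|_{H^2(\Om)}$, while the Galerkin piece reduces (via \eqref{eq:Ritz map with b.c. - Ritz}, valid since $z_h\in\calV_h^0[\xs]$) to the geometric consistency error $\a_{\Om_h^*}(\Rh^{\textnormal{Ritz}}w,z_h)-\a_\Om(R_h^{\textnormal{Ritz}}w,z_h^\ell)$. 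The crude $G_h$-estimate bounds this by $O(h^k)$, which suffices for $j\le k-1$; for $j=k$ one invokes the refined geometric estimate of \citep{ElliottRanner_bulksurface}, which gains a power of $h$ at the cost of $H^2$- or $W^{1,\infty}$-regularity of one argument (available here, since $z\in H^2(\Om)$ and, for $k\ge2$, $w\in H^{k+1}(\Om)\hookrightarrow W^{1,\infty}(\Om)$ with $\|\Rh^{\textnormal{Ritz}}w\|_{W^{1,\infty}(\Om_h^*)}\le c\|w\|_{H^{k+1}(\Om;\Ga)}$ from the $H^1$ bound and an inverse inequality). Together with the $H^1$ bound, this gives the first estimate of the lemma.

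For the material-derivative bounds, differentiating \eqref{eq:Ritz map with b.c. - Ritz} in time along the flow of the reference velocity $v_h^*$ and applying the Leibniz transport formula (in the spirit of Lemma~\ref{lemma:matrix differences}) to $\a_{\Om_h^*(t)}$ and $\a_{\Om(t)}$, using $\mat_h\vphi_h=0$ for moving test functions, shows that $\mat_h\Rh^{\textnormal{Ritz}}w$ solves an elliptic problem of the same form as $\Rh^{\textnormal{Ritz}}(\mat w)$, with extra right-hand-side contributions of the type $\int_{\Om_h^*}\nb(\Rh^{\textnormal{Ritz}}w)\cdot\big(D_{\Om_h^*}(v_h^*)\big)\nb\vphi_h$ and their $\Om$-counterparts, which by the $H^1$ bound above are $O(h^j)$ in the relevant dual norm on $\calV_h^0[\xs]$, and with boundary data $\ga_h^*\mat_h\Rh^{\textnormal{Ritz}}w=\mat_h\Rh^\Ga(\ga w)$ whose deviation from $\Rh^\Ga(\ga(\mat w))$ is $O(h^j)$ in $H^1(\Ga)$ and $O(h^{j+1})$ in $L^2(\Ga)$ by the material-derivative surface Ritz bound \eqref{eq:Ritz map errors} and $\mat(\ga w)=\ga(\mat w)$. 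Applying the already-established stationary bounds to this perturbed problem, and using that the lift commutes with $\mat$ up to $O(h^{k+1})$ geometric errors, yields the second estimate of the lemma.

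The main obstacle is precisely the bulk--surface coupling through the inhomogeneous Dirichlet data: one must verify that replacing the interpolated trace by the \emph{surface} Ritz projection $\Rh^\Ga(\ga w)$ does not degrade the order — which is exactly what the surface Ritz estimates \eqref{eq:Ritz map errors}, including their material-derivative version, are designed to deliver — and, at the top polynomial degree $j=k$, that the geometric consistency error in the $L^2$ duality step is genuinely of order $h^{k+1}$ rather than $h^k$, which relies on the refined geometric perturbation estimates for curved bulk elements of \citep{ElliottRanner_bulksurface} and on the $H^2$-regularity of the dual solution. Carrying both of these facts through the time differentiation needed for the material-derivative bounds is the most delicate part of the argument.
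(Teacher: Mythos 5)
Your argument is correct and follows the same standard route that the paper compresses into a citation: the paper's proof of Lemma~\ref{lemma:R^Ritz error estimates} consists only of the remark that it is a technical adaptation of \cite[Lemmas~3.8 and 3.10]{ElliottRanner_unified} in the setting of \cite[Section~8]{ElliottRanner_unified} (geometric errors via Lemma~8.24 there), with the inhomogeneous Dirichlet data handled by the harmonic extension operator and its properties. Your sketch supplies exactly these ingredients in explicit form: reduction to homogeneous boundary data plus a Galerkin/energy argument for the $H^1$ bound, an Aubin--Nitsche duality for the $L^2$ bound in which the boundary term is controlled by the surface Ritz estimates \eqref{eq:Ritz map errors} and the top-order case $j=k$ by the refined (boundary-strip) geometric perturbation estimates, and time differentiation of \eqref{eq:Ritz map with b.c. - Ritz} combined with the material-derivative surface Ritz bounds for the second estimate. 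The one point to repair is the claim that your extension operator vanishing at interior nodes is ``$H^1$-stable'': the nodal zero-extension is not uniformly $H^1(\Ga_h^*)\to H^1(\Om_h^*)$ stable; it only satisfies a bound of the form $c\bigl(h^{-1/2}\|\cdot\|_{L^2(\Ga_h^*)}+h^{1/2}\|\cdot\|_{H^1(\Ga_h^*)}\bigr)$. For your particular boundary discrepancy $\Rh^\Ga(\ga w)-\Ih^\Ga(\ga w)$, which is $O(h^{j+1})$ in $L^2(\Ga_h^*)$ and $O(h^{j})$ in $H^1(\Ga_h^*)$, this still yields $O(h^{j+1/2})$ and your $H^1$ argument goes through; alternatively, replace it by the $H^{1/2}$-stable discrete harmonic extension of Proposition~\ref{prop:Dirichlet-h}, which is precisely the device the paper points to.
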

\begin{proof}
The proof is a technical adaptation of \cite[Lemma~3.8 and 3.10]{ElliottRanner_unified} (handling the Dirichlet boundary data by the harmonic extension operator and its properties) in the setting \cite[Section~8]{ElliottRanner_unified} (in particular note Lemma~8.24 therein).
\end{proof}

\subsubsection{Stability of finite element projections}

The above finite element projection errors directly imply the stability bounds, for $\calI_h = I_h^\Om, I_h^\Ga$ and $\calP_h = R_h^\Ga, R_h^{\textnormal{Ritz}}$, with $h \leq h_0$,
\begin{equation}
\label{eq:projection stabilities}
\|\calI_h v\|_{H^1} \leq (1+ch) \|v\|_{H^2} \andquad \|\calP_h v\|_{H^1} \leq c \|v\|_{H^1} ,
\end{equation}
where the norm $\|\cdot\|_{H^j}$ denotes the suitable $H^j$-norm in the correct domain corresponding to the projection.

\subsection{The chosen finite element projections of the exact solution}
\label{section:consistency - projections}

We now define the suitable finite element projections:

\begin{alignat*}{5}
x_h^* = &\ \Ih^\Om X, &&& &\\
u_h^* = &\ \Rh^{\textnormal{Ritz}} u , \qquad & \text{therefore} & \qquad & \ga_h^* u_h^* = &\ \Rh^\Ga (\ga u), \\
v_h^* = (v_{\Ga,h}^*,v_{\Om,h}^*) = &\ \Ih^\Om v , \qquad & \text{therefore} & \qquad & \ga_h^* v_h^* = v_{\Ga,h}^* = &\ \Ih^\Ga (\ga v), \\
w_h^* = (\nu_h^*,H_h^*)  = &\ (\Rh^\Ga \nu , \Rh^\Ga H) . \quad &&&& 
\end{alignat*}

With these choices the error estimate for the initial data \eqref{eq:stability - initial errors} directly follows.

\subsection{Consistency estimates for the coupled bulk--surface system}

We will estimate the consistency errors \eqref{eq:stability - defect bounds} of the coupled system using the general strategy given below, which follows the ideas of \citep{DziukElliott_L2,KLLP2017,MCF,MCF_soldriven,Edelmann_harmonicvelo}.

The $L^2_h$ (either $L^2(\Om_h^*)$ or $L^2(\Ga_h^*)$) norm of a defect $d_h$ is given by
\begin{equation*}
\|d_h\|_{L^2_h} = \sup_{0 \ne \vphi_h \in L^2_h} \frac{m_h(d_h,\vphi_h)}{\|\vphi_h\|_{L^2_h}} .
\end{equation*}

We then estimate the scalar product using the following approach (see the above references as well):
\begin{itemize}
\item[(a)] We first subtract the weak formulation of the continuous sub-problems from the corresponding defect equation from Section~\ref{section:defects and errors} (given there in a matrix--vector form). 
Therefore the defect is rewritten as the sum of pairs comparing discrete and continuous objects.
\item[(b)] Crucial pairs will vanish using the definitions of the Ritz maps \eqref{eq:Ritz map} and \eqref{eq:Ritz map with b.c. - Ritz}.
\item[(c)] We estimate the remaining pairs separately, using the following strategy: we add and subtract a suitable term, and then use the geometric approximation errors for the bilinear forms, see \citep{DziukElliott_L2}, \cite[Lemma~5.6]{highorderESFEM}, \cite[Lemma~10.3]{ElliottRanner_unified} and apply the stability bounds \eqref{eq:projection stabilities} and the error bounds for the finite element projections \eqref{eq:Ritz map errors in bulk }, \eqref{eq:Ritz map errors}. 
\end{itemize}

In the defect estimates below we will clearly indicate these steps. 

As in the stability analysis, without loss of generality, we will again set the parameters
\begin{equation*}
\alpha = \beta = \mu = 1 .
\end{equation*}

\subsubsection{Consistency estimates for the Robin boundary value problem}

(a) Recalling that $u_h^* = \Rh^{\textnormal{Ritz}} u$, and by subtracting the weak formulation \eqref{robin-weak} with $\vphi^u = (\vphi_h^u)^\ell$ from (the abstract formulation of) the semi-discrete Robin problem \eqref{robin-defect}, we obtain the following representations of the defect $d_u$:
\begin{equation}
\label{eq:defect pairwise - Robin problem}
\begin{aligned}
	m_{\Om_h}(d_u,\vphi_h^u) 
	= &\ a_{\Om_h}(u_h^*,\varphi_h^u) - a_{\Om}(u,(\vphi_h^u)^\ell) \\
	&\ + \Big( a_{\Ga_h}(\ga_h^* u_h^*,\varphi_h^u) - a_{\Ga}(\ga u,(\vphi_h^u)^\ell) \Big) \\
	&\ + \Big( m_{\Ga_h}(\ga_h u_h^*,\varphi_h^u) - m_{\Ga}(\ga u,(\vphi_h^u)^\ell) \Big) \\
	&\ - \Big( \ell_h^*(\varphi_h^u) - \ell((\vphi_h^u)^\ell) \Big)
\end{aligned}
\end{equation}
for which we recall the definition of the (bi)linear forms $a_\Om$, $a_\Ga$, $m_\Ga$, and $\ell$ as well as their semi-discrete counterparts $a_{\Om_h}$, $a_{\Ga_h}$, $m_{\Ga_h}$, and $\ell_h^*$ (note that the definition of semi-discrete linear from $\ell_h^*$ contains the approximations $Q_h^* = \Ih Q$ and $H_h^* = \Rh^\Ga H$).

(b) We start by analysing the first two pairs in $d_u$. 

For the first two pairs, using the definition of the continuous and discrete bilinearforms $a_\Om$ and $a_{\Om_h}$, and $a_\Ga$ and $a_{\Ga_h}$, and the definition of $u_h^* = \Rh^{\textnormal{Ritz}} u$ \eqref{eq:Ritz map with b.c. - Ritz} together with the definition for its trace $\ga_h^* u_h^* = \ga_h^* \Rh^{\textnormal{Ritz}} u = \Rh^\Ga (\ga u)$ via \eqref{eq:Ritz map}, we directly compute
\begin{equation*}
\begin{aligned}
	&\ \Big(a_{\Om_h}(u_h^*,\varphi_h^u) - a_\Om(u,(\vphi_h^u)^\ell)\Big) 
	+ \Big(a_{\Ga_h}(\ga_h^* u_h^*,\varphi_h^u) - a_\Ga(\ga u,(\vphi_h^u)^\ell)\Big) \\
	= &\ \Big(a_{\Om_h}(u_h^*,\varphi_h^u) - \a_\Om(u,(\vphi_h^u)^\ell)\Big) 
	+ \Big(\a_{\Ga_h}(\ga_h^* u_h^*,\varphi_h^u) - \a_\Ga(\ga u,(\vphi_h^u)^\ell)\Big) \\
	&\ - \Big( m_{\Om_h}(u_h^*,\varphi_h^u) - m_\Om(u,(\vphi_h^u)^\ell) \Big) 
	- \Big( m_{\Ga_h}(\ga_h^* u_h^*,\varphi_h^u) - m_\Ga(\ga u,(\vphi_h^u)^\ell) \Big) .
\end{aligned}
\end{equation*}
Crucially, only the last two pairs remain here as the first two pairs vanish, by the definition of the bulk Ritz map $u_h^* = \Rh^{\textnormal{Ritz}} u$, see \eqref{eq:Ritz map with b.c. - Ritz}, in combination with the definition \eqref{eq:Ritz map} and observing the proper choice of boundary condition $\ga_h^* u_h^* = \ga_h^* \Rh^{\textnormal{Ritz}} u = \Rh^\Ga (\ga u)$ in \eqref{eq:Ritz map with b.c. - Ritz}.

(c) 
The remaining pairs in $d_u$ are therefore
\begin{align*}
m_{\Om_h}(d_u,\vphi_h^u) 
= &\ - \Big( m_{\Om_h}(u_h^*,\varphi_h^u) - m_\Om(u,(\vphi_h^u)^\ell) \Big) \\
&\ - \Big( m_{\Ga_h}(\ga_h^* u_h^*,\varphi_h^u) - m_\Ga(\ga u,(\vphi_h^u)^\ell) \Big) \\
&\ + \Big( m_{\Ga_h}(\ga_h u_h^*,\varphi_h^u) - m_{\Ga}(\ga u,(\vphi_h^u)^\ell) \Big)  \\
&\ - \Big( \ell_h^*(\varphi_h^u) - \ell((\vphi_h^u)^\ell) \Big) .
\end{align*}

The first three pairs with the $L^2$-bilinear forms are estimated, using the error estimates \eqref{eq:Ritz map errors} and the stability bound \eqref{eq:projection stabilities} for the Ritz map, and the bulk geometric approximation errors \cite[Lemma~8.24]{ElliottRanner_unified}, as
\begin{align*}
m_{\Om_h}(u_h^*,\varphi_h^u) - m_\Om(u,(\vphi_h^u)^\ell) 
= &\ m_{\Om_h}(\Rh^{\textnormal{Ritz}} u,\vphi_h^u) - m_\Om(R_h^{\textnormal{Ritz}} u,(\vphi_h^u)^\ell) 
+ m_\Om(R_h^{\textnormal{Ritz}} u - u,(\vphi_h^u)^\ell) \\
%	\leq &\ c h^{k+1} \| R_h^{\textnormal{Ritz}} u \|_{L^2(\Om)} \| (\vphi_h^u)^\ell \|_{L^2(\Om)} 
%	+ c \| R_h^{\textnormal{Ritz}} u - u \|_{L^2(\Om)} \| (\vphi_h^u)^\ell \|_{L^2(\Om)} \\
%	\leq &\ c h^{k+1} \| u \|_{H^1(\Om;\Ga)} \| (\vphi_h^u)^\ell \|_{L^2(\Om)}
%	+ c h^{k+1} \| u \|_{H^{k+1}(\Om;\Ga)} \| (\vphi_h^u)^\ell \|_{L^2(\Om)} \\
\leq &\ c h^{k+1} \| u \|_{H^{k+1}(\Om;\Ga)} \| (\vphi_h^u)^\ell \|_{L^2(\Om)} . 
\end{align*}
The two similar terms are estimated analogously (now using \cite[Lemma~5.6]{highorderESFEM}), as $O(h^{k+1})$.

For the last pair, we will recall that the inhomogeneity $Q_h^*$ is the interpolation $\Ih Q$ and $H_h^*$ is the surface Ritz map $\Rh^\Ga H$. 
We then estimate, using the error estimates \eqref{eq:interpolation errors}, \eqref{eq:Ritz map errors}, the stability bounds \eqref{eq:projection stabilities}, and the geometric approximation errors \cite[Lemma~5.6]{highorderESFEM} and \cite[Lemma~8.24]{ElliottRanner_unified}, as
\begin{align*}
%	&\ 
\ell_h^*(\varphi_h^u) - \ell((\vphi_h^u)^\ell) 
%	\\
= &\ m_\Om(1,(\vphi_h^u)^\ell) - m_{\Om_h}(1,\vphi_h^u) \\
&\ + m_{\Ga_h}(Q_h^*,\ga_h\vphi_h^u) - m_\Ga(Q,\ga (\vphi_h^u)^\ell) \\
&\ + \Big( m_{\Ga_h}(H_h^*,\ga_h \vphi_h^u) - m_\Ga(H,\ga (\vphi_h^u)^\ell) \Big) \\
%	\leq &\ c h^{k+1} \| (\vphi_h^u)^\ell \|_{L^2(\Om)} \| 1 \|_{L^2(\Om)} \\
%	&\ + c h^{k+1} \| (Q_h^*)^\ell \|_{L^2(\Ga)} \| \ga (\vphi_h^u)^\ell \|_{L^2(\Ga)} 
%	+ c \| (Q_h^*)^\ell - Q \|_{L^2(\Ga)}\| \ga (\vphi_h^u)^\ell \|_{L^2(\Ga)} \\
%	&\ + c h^{k+1} \| (H_h^*)^\ell \|_{L^2(\Ga)} \| \ga (\vphi_h^u)^\ell \|_{L^2(\Ga)} 
%	+ c \| (H_h^*)^\ell - H \|_{L^2(\Ga)}\| \ga (\vphi_h^u)^\ell \|_{L^2(\Ga)} \\
%	\leq &\ c h^{k+1} \| (\vphi_h^u)^\ell \|_{L^2(\Om)} \| 1 \|_{L^2(\Om)} \\
%	&\ + c h^{k+1} \| I_h Q \|_{L^2(\Ga)}  \| \ga (\vphi_h^u)^\ell \|_{L^2(\Ga)} 
%	+ c \| I_h Q - Q \|_{L^2(\Ga)} \| \ga (\vphi_h^u)^\ell \|_{L^2(\Ga)} \\
%	&\ + c h^{k+1} \| R_h^\Ga H \|_{L^2(\Ga)} \| \ga (\vphi_h^u)^\ell \|_{L^2(\Ga)} 
%	+ c \| R_h^\Ga H - H \|_{L^2(\Ga)} \| \ga (\vphi_h^u)^\ell \|_{L^2(\Ga)} \\
%	\leq &\ c h^{k+1} \Big( \|Q\|_{H^{k+1}(\Ga)} + \|H\|_{H^{k+1}(\Ga)} \Big) \Big( \| (\vphi_h^u)^\ell \|_{L^2(\Om)} + \| \ga (\vphi_h^u)^\ell \|_{L^2(\Ga)} \Big) \\
\leq &\ c h^{k+1} \Big( \|Q\|_{H^{k+1}(\Ga)} + \|H\|_{H^{k+1}(\Ga)} \Big) \| (\vphi_h^u)^\ell \|_{H^1(\Om)} .
\end{align*}

Altogether we obtain the following defect bounds in the discrete dual norm:
\begin{align*}
&\ \|d_u\|_{L^2(\Om_h^*)} = O(h^{k+1}) .
\end{align*}

By differentiating \eqref{eq:defect pairwise - Robin problem} in time, and using similar arguments as above -- analogously to \cite[Section~6]{KoLL21} -- yields the consistency bound
\begin{align*}
&\ \|\dot d_u\|_{L^2(\Om_h^*)} = O(h^{k+1}) .
\end{align*}

\subsubsection{Consistency estimates for the forced mean curvature flow}

The defects $d_\nu$ and $d_H$ in the forced mean curvature flow are estimated using the steps (a), (b), and (c), exactly as in \citep{MCF,MCF_soldriven}, 
heavily relying on the the \emph{proper choice of boundary conditions} in the definition of $R_h^{\textnormal{Ritz}}$, i.e.
\begin{equation*}
\ga_h^* u_h^* = \ga_h^* \Rh^{\textnormal{Ritz}} u = \Rh^\Ga (\ga u) .
\end{equation*}

The defect in the velocity $d_v$ is directly estimated, using \cite[Lemma~5.3]{KoLL21} and the interpolation error estimate on $\Ga$ \eqref{eq:interpolation errors} for $v_\Ga^* = \Ih^\Ga (\ga v)$, as
\begin{align*}
\|d_{v_\Ga}\|_{H^1(\Ga_h^*)} 
%	= &\ \Big\| \big( v_\Ga^* - (\ga v)^{-\ell} \big) - \big( \Ih^\Ga(V_h^* \, \nu_h^*)  - V^{-\ell} \, \nu^{-\ell} \big) \Big\|_{H^1(\Ga_h^*)} \\
\leq  &\ c \big\| I_h^\Ga \ga v - \ga v \big\|_{H^1(\Ga)} 
+ \big\| \Ih^\Ga( V_h^* \, \nu_h^*)  - V^{-\ell} \, \nu^{-\ell} \big\|_{H^1(\Ga_h^*)} \\
%	\leq &\ c \big\| I_h^\Ga \ga v - \ga v \big\|_{H^1(\Ga)} \\
%	&\ + \big\| \Ih^\Ga( V_h^* \, \nu_h^* - \Ih^\Ga V \, \Ih^\Ga\nu ) \big\|_{H^1(\Ga_h^*)} 
%	+ \|\Ih^\Ga V \, \Ih^\Ga\nu - V^{-\ell} \, \nu^{-\ell}\|_{H^1(\Ga_h^*)} \\
\leq &\ c h^k \|\ga v \|_{H^{k+1}(\Ga)} + \| V_h^* \, \nu_h^* - \Ih^\Ga V \, \Ih^\Ga\nu \|_{H^1(\Ga_h^*)} 
%\\
%&\ 
+ 2 c h^k \big( \|V\|_{H^{k+1}(\Ga)} + \|\nu\|_{H^{k+1}(\Ga)} \big) .
\end{align*}
The middle term here is estimated further as $O(h^k)$, recalling that $V_h^* = - \Rh^\Ga H + \ga_h \Rh^{\textnormal{Ritz}} u$ and using multiple triangle inequalities, together with the approximation properties of the Ritz maps and the surface interpolation.

Altogether we proved the following defect bounds:
\begin{equation*}
\|d_\nu\|_{L^2(\Ga_h^*)} = O(h^{k+1}) , \quad
\|d_H\|_{L^2(\Ga_h^*)} = O(h^{k+1}) , \quad \text{and} \quad
\|d_{v_\Ga}\|_{H^1(\Ga_h^*)} = O(h^k) .
\end{equation*}

\subsubsection{Consistency estimates for the harmonic velocity extension}

Recalling that we have set $v_h^* = \Ih^\Om v$ hence $\ga_h^* v_h^* = v_{\Ga,h}^* = \Ih^\Ga (\ga v)$, which has nodal values $\vs = (\vs_\Ga,\vs_\Om)^T \in \R^{3N}$ with $\vs_\Ga$ given as the nodal values of the surface interpolation.

In order to derive an expression for the defect in the velocity in $\Om$, we translate \eqref{eq:defects - harmonic extension} to the semi-discrete functional analytic setting, and obtain: that $d_v$ satisfies the equation, for the given $v_h^* = \Ih^\Om v$ and for any $\varphi_h^v \in \calV_{h,0}[\bfx]^3$,
\begin{equation*}
a_{\Om_h^*}( \Ih^\Om v , \vphi_h^v) = m_{\Om_h^*}(d_v , \vphi_h^v) .
\end{equation*}

Subtracting the equivalent weak formulation of the harmonic extension \eqref{harmonic-v-weak} from the above equation, setting $\vphi^v = (\vphi_h^v)^\ell \in H_0^1(\Om)^3$, yields
\begin{align*}
m_{\Om_h^*}(d_v , \vphi_h^v) = a_{\Om_h^*}( \Ih^\Om v , \vphi_h^v) - a_{\Om}(v,(\vphi_h^v)^\ell) .
\end{align*}

By similar arguments as above we obtain
\begin{align*}
m_{\Om_h^*}(d_v , \vphi_h^v) = &\ a_{\Om_h^*}( \Ih^\Om v , \vphi_h^v) - a_{\Om}(v,(\vphi_h^v)^\ell) 
\leq c h^{k} \|v\|_{H^{k+1}(\Om)} \|\vphi_h^v\|_{H^1(\Om)} .
\end{align*} 
using the bulk interpolation error estimate \eqref{eq:interpolation errors}.

Altogether we proved the defect bound
\begin{equation*}
\|d_v\|_{H^{-1}_h(\Om_h^*)} = O(h^{k}) .
\end{equation*}

%\subsection{Estimates for initial values}

\section{Numerical experiments}
\label{section:numerics}

We performed numerical simulations and experiments for the coupled bulk--surface system \eqref{robin-bvp}--\eqref{ODE} using its matrix--vector formulation \eqref{eq:matrix-vector formulation - full system} and linearly implicit BDF time discretizations, see the next Section~\ref{section:BDF}:
\begin{itemize}
\item We performed convergence experiments involving an example with radially symmetric exact solutions, and report on the convergence rates to illustrate the theoretical results of Theorem~\ref{theorem:main}. The same test example was used in \cite[Section~6.4.1]{EKS19}. Note that the parameter $\alpha$ here equals to $1 / \alpha$ in \citep{EKS19}.
\item We performed some numerical experiments from \cite[Section~6.4.3]{EKS19}: choosing the same parameters $\alpha$ and $\beta$, as well as using the three-di\-men\-sio\-nal analogues of the initial domain.
\item We performed a numerical experiment which compares the numerical solution for various regularization parameters ($\mu = 0$, $0.01$, $0.1$, and $1$).
\end{itemize}

The numerical experiments use quadratic evolving bulk--surface finite elements, for computing the finite element expressions quadratures of sufficiently high order were employed.
The parametrization of the quadratic elements was inspired by \citep{BCH2006}. 
The initial meshes were all generated using DistMesh \citep{distmesh}, without taking advantage of any symmetry of the surface.

\subsection{Linearly implicit full discretization}
\label{section:BDF}

For the time discretization, we use a $q$-step linearly implicit backward difference formula with $q \le 6$. For a step size $\tau > 0$, $t_n = n \tau \le T$, we introduce for $n \ge q$ the discrete time derivative

\begin{equation}
\label{eq:discrete time derivative}
\begin{aligned}
	\dot{\bfu}^n = \frac{1}{\tau} \sum_{j=0}^q \delta_j \bfu^{n-j}
\end{aligned}
\end{equation}
and the extrapolated value
\begin{equation}
\label{eq:extrapolated value}
\begin{aligned}
	\widetilde{\bfu}^n = \sum_{j=0}^{q-1} \gamma_j \bfu^{n-1-j} \,.
\end{aligned}
\end{equation}
The coefficients are given by the relations $\delta(\zeta)=\sum_{j=0}^q \delta_j \zeta^j = \sum_{\ell=1}^q \frac{1}{\ell}(1-\zeta)^\ell$ and $\gamma(\zeta)=\sum_{j=0}^{q-1} \gamma_j \zeta^j = (1-(1-\zeta)^q)/\zeta$, respectively.

We compute approximations $\bfu^n$ to $u(t_n)$, $\bfp^n$ to $p(t_n)$ etc. by the linearly implicit BDF discretization
\begin{equation}
\label{eq:full discretization}
\begin{aligned}
	\left( \bfA_{\bar\Om}(\widetilde\bfx^n) + \bftr\T \big( \mu \bfA_\Ga(\widetilde\bfx^n) + \aalpha \bfM_\Ga(\widetilde\bfx^n) \big) \bftr \right) \bfu^n &= \bff_\bfu(\widetilde\bfx^n,\widetilde\bfH^n) \\
	\bfM_\Ga^{[3]}(\widetilde\bfx^n) \dot{\bfn}^n + \beta \bfA_\Ga^{[3]} (\widetilde\bfx^n) \bfn^n &=  \bff_\nu(\widetilde\bfx^n,\widetilde\bfn^n,\widetilde\bfH^n)  -\aalpha  \bfD_\Ga(\widetilde\bfx^n) \bfu^n ,
	\\
	\bfM_\Ga(\widetilde\bfx^n) \dot{\bfH}^n + \beta \bfA_\Ga(\widetilde\bfx^n) \bfH^n &=  \bff_V(\widetilde\bfx^n,\widetilde\bfn^n,\widetilde\bfH^n,\widetilde\bfu^n) +  \aalpha  \bfA_\Ga(\widetilde\bfx^n) \bfu^n. \\
	\bfV^n &=-\beta \bfH^n + \aalpha \bfu^n \\
	\bfv_\Ga^n &= \bfV^n \bullet \bfn^n \\
	-\bfA_{\Om \Om}(\widetilde\bfx^n) \bfv_\Om^n &= \bfA_{\Om \Ga}(\widetilde\bfx^n)\bfv_\Ga^n \\
	\dot\bfx^n &= \bfv^n \,.
\end{aligned}
\end{equation}

\subsection{Convergence experiments using radially-symmetric solutions}

\subsubsection{Constructing the radially-symmetric solutions in $\R^{m+1}$}

We consider radially-symmetric solutions to the coupled bulk--surface problem \eqref{robin-bvp}--\eqref{ODE} with $\clb = 0$ and a time-independent constant inhomogeneity $Q$. For curves in two dimensions the exact same exact solutions were constructed and used in \cite[Section~6.4.1]{EKS19}. (Note that the parameter $\alpha$ here equals to $1 / \alpha$ in \citep{EKS19}.)
For later reference we performed these computations in $m$-dimensional surfaces in $\R^{m+1}$, and will report on experiments in $\R^3$.

We recall that the surface normal and mean curvature on a sphere $\Ga\t$ of radius $R\t$ are given by
\begin{equation*}
\nu = \frac{x}{R\t} \andquad H = \frac{m}{R\t} .
\end{equation*}

(a) Let us first construct the radially-symmetric solution of the Robin problem \eqref{robin-bvp} with $\clb = 0$ (i.e.~equation~\eqref{robin}).

Recall that in $\R^{m+1}$ the Laplace--Beltrami operator for a radially-symmetric function $u$ is given by
\begin{equation*}
\laplace u = \frac{1}{r^m} \pa_r \Big( r^m \pa_r u \Big) .
\end{equation*}
% https://en.wikipedia.org/wiki/Laplace_operator#N_dimensions
Therefore, 
\begin{equation*}
u = u(r) = \frac{r^2}{2(m+1)} + c_0 
\end{equation*}
satisfies the bulk equation \eqref{robin-pde}, i.e. $-\laplace u = -1$. 
% in 3D (that is for m+1 = 3): https://www.wolframalpha.com/input?i=-1%2Fr%5E2+*+%28+r%5E2*%28r%5E2%2F6+%2B++c_0+%29%27++%29%27

Now we need to determine the constant $c_0$ such that the Robin boundary condition \eqref{robin-bc} holds.
Recall that in $\R^{m+1}$ the gradient of a radially-symmetric function $u$ is given by
\begin{equation*}
\nb u = \pa_r u \, \hat{\boldsymbol{r}} ,
\end{equation*}
% https://en.wikipedia.org/wiki/Del_in_cylindrical_and_spherical_coordinates#Del_formula
% or in 3D https://en.wikipedia.org/wiki/Gradient#Cylindrical_and_spherical_coordinates
where $\hat{\boldsymbol{r}}$ is the unit vector corresponding to the radial coordinate. We note here that on the sphere  $\Ga\t$ we have $\hat{\boldsymbol{r}} = \nu$.

We now plug in the solution into the boundary condition \eqref{robin-bc} of the Robin problem, and compute on $\Ga\t$
\begin{align*}
Q + \beta H = &\
\Big(\pa_r \Big( \frac{r^2}{2(m+1)} + c_0 \Big) \, \hat{\boldsymbol{r}}\Big)\Big|_{R\t} \cdot \nu + \aalpha \Big( \frac{r^2}{2(m+1)} + c_0 \Big)\Big|_{R\t} \\
%	= &\ \Big( \frac{r}{m+1}  \, \hat{\boldsymbol{r}}\Big)\Big|_{R\t} \cdot \nu + \aalpha \Big( \frac{R\t^2}{2(m+1)} + c_0 \Big) \\
= &\ \frac{R\t}{m+1} + \aalpha \Big( \frac{R\t^2}{2(m+1)} + c_0 \Big) .
\end{align*}
Upon plugging in $H = m/R\t$ and rearranging, we obtain
%\begin{align*}
%	&\ Q + \beta \frac{m}{R\t} = \frac{R\t}{m+1} + \aalpha \Big( \frac{R\t^2}{2(m+1)} + c_0 \Big) \\
%	\Leftrightarrow &\ c_0 = \frac{1}{\alpha} \Big( Q + \beta \frac{m}{R\t} - \frac{R\t}{m+1} \Big) - \frac{R\t^2}{2(m+1)} 
%\end{align*}
\begin{align*}
c_0 =  \frac{1}{\alpha} \bigg( Q + \beta \frac{m}{R\t} - \frac{R\t}{m+1} \bigg) - \frac{R\t^2}{2(m+1)} ,
\end{align*}
and hence the radially-symmetric solution of the Robin problem is given by
\begin{equation}
\label{eq:exact solution - u}
u(x,t) = u(r,t) = \frac{r^2}{2(m+1)} + \frac{1}{\alpha} \bigg( Q + \beta \frac{m}{R\t} - \frac{R\t}{m+1} \bigg) - \frac{R\t^2}{2(m+1)} .  
\end{equation}

(b) We will now formulate the ODE governing the surface evolution and determine its solution.

We start by recalling that $X(x^0,t) = R\t x_0$ (with $x^0$ on the unit sphere) which satisfies $\dot X = v \circ X$, where 
\begin{equation*}
v = V \nu = \bigl(-\beta H + \alpha u \bigr) \, \nu ,
\end{equation*}
where the normal velocity, by the above computations, is given by
\begin{align*}
V = &\ - \beta \frac{m}{R\t} + \alpha u(R\t,t) 
%	\\
%	= &\ - \beta \frac{m}{R\t} + \alpha \bigg( \frac{R\t^2}{2(m+1)} + \frac{1}{\alpha} \bigg( Q + \beta \frac{m}{R\t} - \frac{R\t}{m+1} \bigg) - \frac{R\t^2}{2(m+1)} \bigg) \\
%	= &\ - \beta \frac{m}{R\t} + \Big( Q + \beta \frac{m}{R\t} - \frac{R\t}{m+1} \Big) \\
%	&\
= Q  - \frac{R\t}{m+1} .
\end{align*}
The ODE for the radially-symmetric surface evolution therefore reduces to 
\begin{align*}
\dot R\t = &\ Q  - \frac{R\t}{m+1} , \\
R(0) = &\ R^0 ,
\end{align*}
which has the solution, for all $t$,
\begin{equation}
\label{eq:exact solution - R}
R\t = (R^0 - (m+1)Q) e^{-t/(m+1)} + (m+1)Q .
\end{equation}
% https://www.wolframalpha.com/input?i=R%27%28t%29+%3D+Q+-+R%28t%29%2F%28m%2B1%29

\subsubsection{Convergence experiments}

For the convergence experiment we chose the following initial values and parameters: The initial surface $\Ga^0$ is a two-dimensional sphere of radius $R^0 = 1.5$, the time-independent constant forcing is set to $Q = 1.5$, the initial concentration $u^0(x,0)$ is given by \eqref{eq:exact solution - u} for all $x \in \Ga^0$. That is we are in a situation where a solution exists on $[0,\infty)$ and the exact solutions for $\Ga[X]$ and $u(\cdot,t)$ are given in \eqref{eq:exact solution - R} and \eqref{eq:exact solution - u}, respectively. 
The above choices are the exact same as the one chosen for \cite[Figure~3]{EKS19} (except the computations here are performed in the two-dimensional case), similarly to \citep{EKS19} the parameters $\alpha$ and $\beta$ are varied. The regularization parameter is set to $\mu = 0$.

We started algorithm \eqref{eq:full discretization} from the nodal interpolations of the exact initial values $\Ga[X(\cdot,t_i)]$, $\nu(\cdot,t_i)$, $H(\cdot,t_i)$, and $u(\cdot,t_i)$, for $i=0,\dotsc,q-1$.
In order to illustrate the convergence results of Theorem~\ref{theorem:main}, we have computed the errors between the numerical solution \eqref{eq:full discretization} and (the nodal interpolation of the) exact solutions of the coupled bulk--surface problem \eqref{robin-bvp}--\eqref{ODE} for the above radially-symmetric solution in dimension $m=2$.  

In Figure~\ref{fig:conv_spacetime_alpha1_beta1} we report on the errors between the numerical solution and the interpolation of the exact solution until the final time $T=1$, for a sequence of meshes (see plots) and for a sequence of time steps $\tau_{k+1} = \tau_k / 2$. 
The logarithmic plots report on the $L^\infty(H^1)$ norm of the errors against the mesh width $h$ in Figure~\ref{fig:conv_spacetime_alpha1_beta1} top, and against the time step size $\tau$ in Figure~\ref{fig:conv_spacetime_alpha1_beta1} bottom.
The lines marked with different symbols and different colours correspond to different time step sizes and to different mesh refinements on the top and bottom, respectively.

In both plots in Figure~\ref{fig:conv_spacetime_alpha1_beta1} we can observe two regions: on the top, a region where the spatial discretization error dominates, matching the $O(h^2)$ order of convergence of Theorem~\ref{theorem:main} (see the reference lines), and a region, with small mesh size, where the temporal discretization error dominates (the error curves flatten out). For the graphs in the bottom, the same description applies, but with reversed roles. Convergence of fully discrete methods is not proved here, but $O(\tau^2)$ is expected for the 2-step BDF method, cf.~\citep{MCF}.

%Figures~\ref{fig:conv_spacetime_alpha1_beta01} and \ref{fig:conv_spacetime_alpha01_beta01} show the exact same plots with the parameter choices $\alpha = 1$ and $\beta = 0.1$, and $\alpha = 10$ and $\beta = 0.1$, respectively.

The convergence in time and in space as shown by Figure~\ref{fig:conv_spacetime_alpha1_beta1} %, \ref{fig:conv_spacetime_alpha1_beta01} and \ref{fig:conv_spacetime_alpha01_beta01}
are in agreement with the theoretical convergence results (note the reference lines). 
We obtained similar convergence plots for varying $\alpha$ and $\beta$ parameters. 
%The plot limits are kept identical across the three figures.

\begin{figure}[htbp]
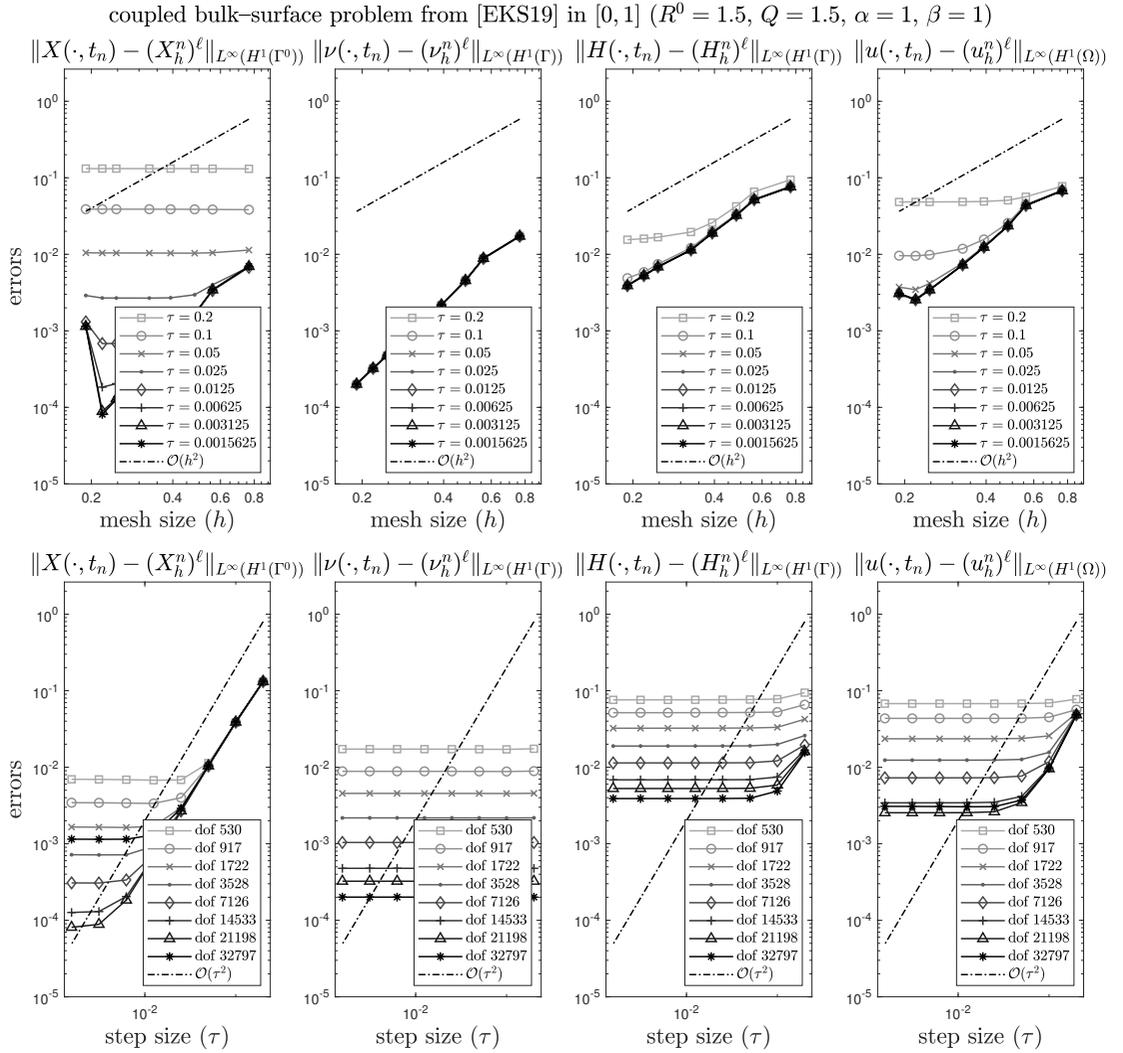

\includegraphics[width=\textwidth]{"figures/convplot_T1_BDF2_alpha1_beta1_space_Linfty"}

\smallskip
\includegraphics[width=\textwidth,clip,trim={0 0 0 20}]{"figures/convplot_T1_BDF2_alpha1_beta1_time_Linfty"}
\caption{Spatial and temporal convergence of the BDF2 / quadratic ESFEM discretization of the coupled bulk--surface problem with $\alpha = 1$ and $\beta = 1$.}
\label{fig:conv_spacetime_alpha1_beta1}
\end{figure}

%\begin{figure}[htbp]
%	\includegraphics[width=\textwidth]{"figures/convplot_T1_BDF2_alpha1_beta01_space_Linfty"}
%	
%	\smallskip
%	\includegraphics[width=\textwidth,clip,trim={0 0 0 20}]{"figures/convplot_T1_BDF2_alpha1_beta01_time_Linfty"}
%	\caption{Spatial and temporal convergence of the BDF2 / quadratic ESFEM discretization of the coupled bulk--surface problem with $\alpha = 1$ and $\beta = 0.1$.}
%	\label{fig:conv_spacetime_alpha1_beta01}
%\end{figure}
%
%\begin{figure}[htbp]
%	\includegraphics[width=\textwidth,clip,trim={78 0 72 0}]{}
%	
%	\smallskip
%	\includegraphics[width=\textwidth,clip,trim={0 0 0 20}]{"figures/convplot_T1_BDF2_alpha01_beta01_time_Linfty"}
%	\caption{Spatial and temporal convergence of the BDF2 / quadratic ESFEM discretization of the coupled bulk--surface problem with $\alpha = 10$ and $\beta = 0.1$.}
%	\label{fig:conv_spacetime_alpha01_beta01}
%\end{figure}

\subsection{Experiments on $\alpha$ and $\beta$ dependence}

As in \cite[Section~6.4.3]{EKS19} as an initial domain $\Om \subset \R^3$ we choose the ellipsoid with radii $0.5$, $0.5$, and $1$, i.e.
\begin{equation*}
\Om = \Big\{ x \in \R^3 \mid \sqrt{ \tfrac{x_1^2}{0.5^2} + \tfrac{x_2^2}{0.5^2} + x_3^2 } \leq 1 \Big\} , \andquad \Ga = \pa\Om .
\end{equation*}
The initial domain and surface is approximated with quadratic bulk--surface finite elements with $7162$ and $1430$ degrees-of-freedom in the bulk and on the boundary, respectively.
The time step size is set to $\tau = 10^{-3}$. 
The model parameters are set to $Q = 1.5$ and $\mu = 0$, while $\alpha$ and $\beta$ are varied for different experiments. 

The initial data for the geometry ($\nu_h^0$ and $H_h^0$) were obtained by interpolation, while for the Robin problem $u_h^0$ solves the discretized Robin problem \eqref{robin-h}.

In Figure~\ref{fig:EKS_alpha01_beta1} and \ref{fig:EKS_alpha1_beta01} we report on the numerical solution at different times with $\alpha = 10$ and $\beta = 1$ and $\alpha = 1$ and $\beta = 0.1$, respectively. For the sake of a direct comparison we are using the identical values as \cite[Figure~5]{EKS19} and \cite[Figure~6]{EKS19}, respectively. The two experiments are performed in different dimensions. (Note that the parameter $\alpha$ here equals to $1 / \alpha$ in \citep{EKS19}.)

\begin{figure}[htbp]
\centering
\includegraphics[width=0.7\textwidth,clip,trim={0 50 0 18}]{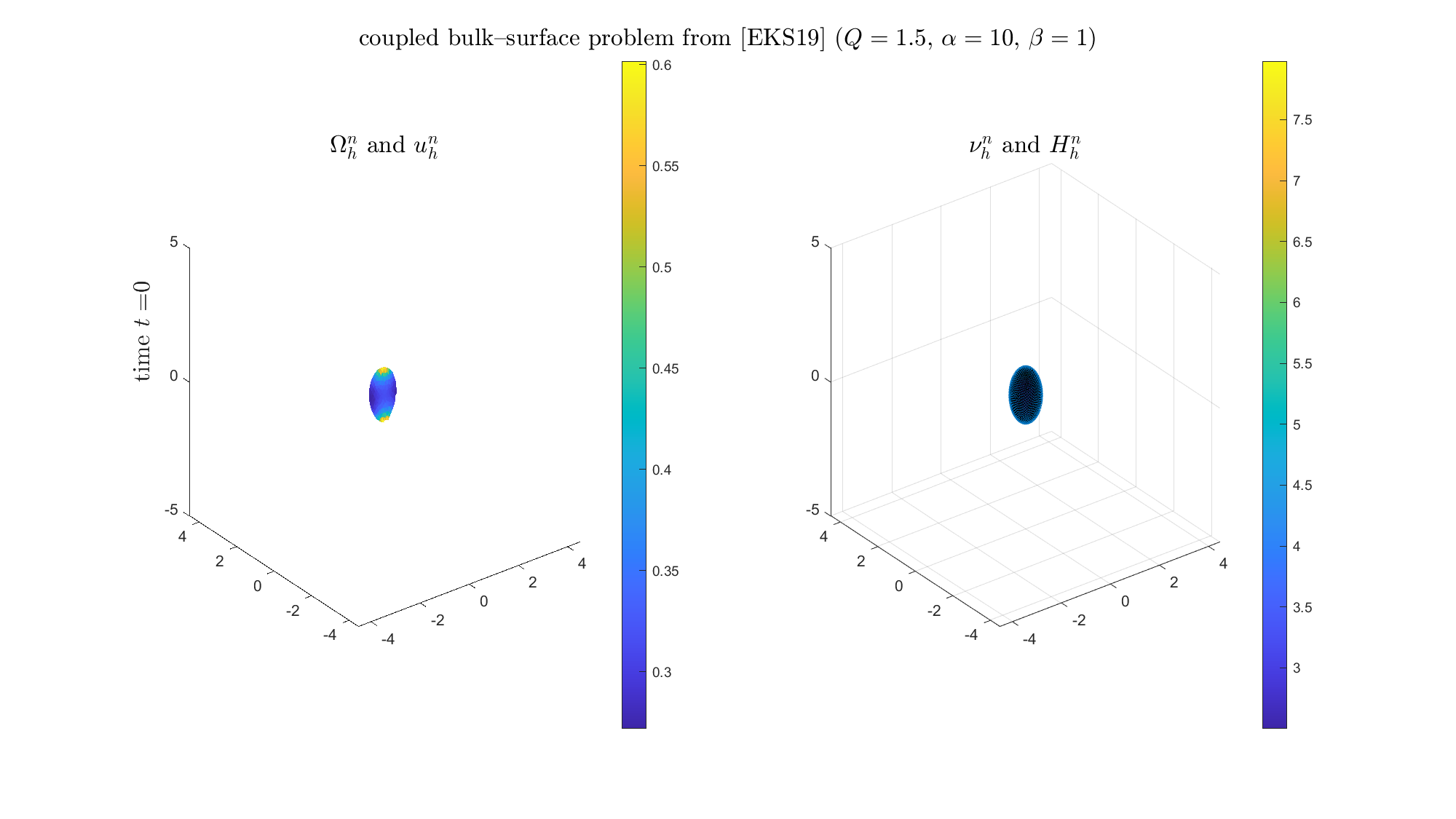}
\includegraphics[width=0.7\textwidth,clip,trim={0 50 0 40}]{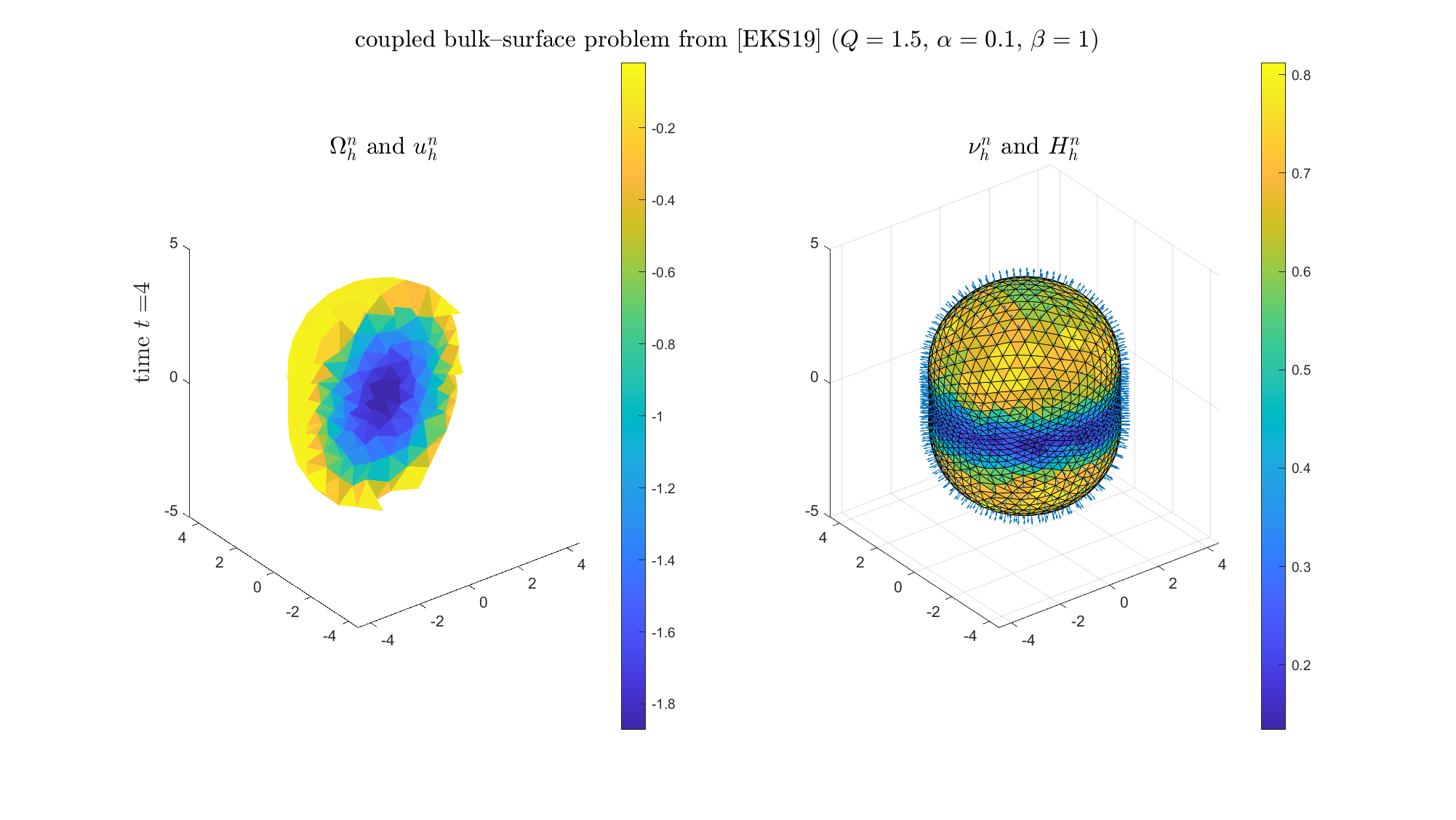}
\includegraphics[width=0.7\textwidth,clip,trim={0 50 0 40}]{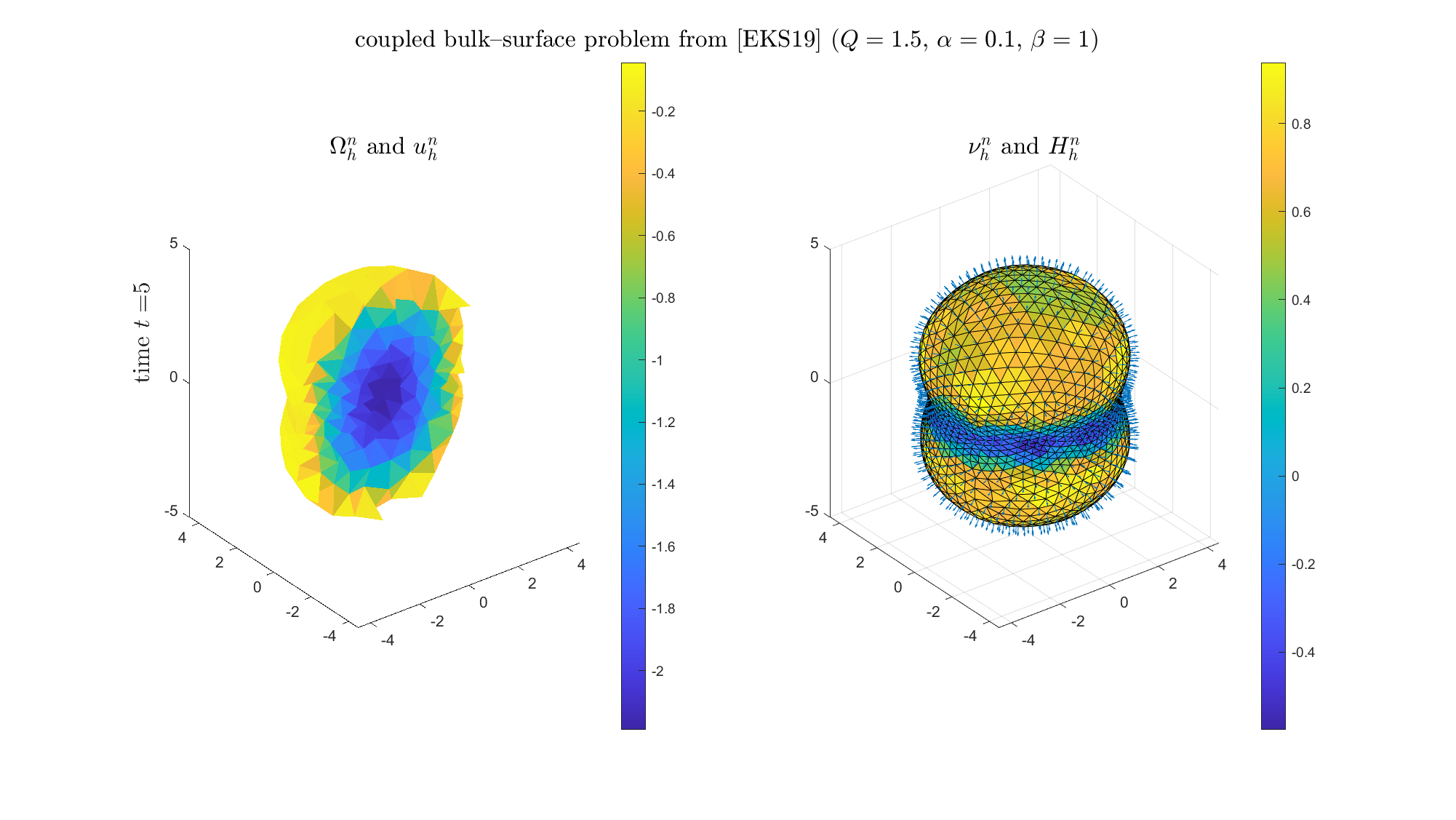}
\includegraphics[width=0.7\textwidth,clip,trim={0 50 0 40}]{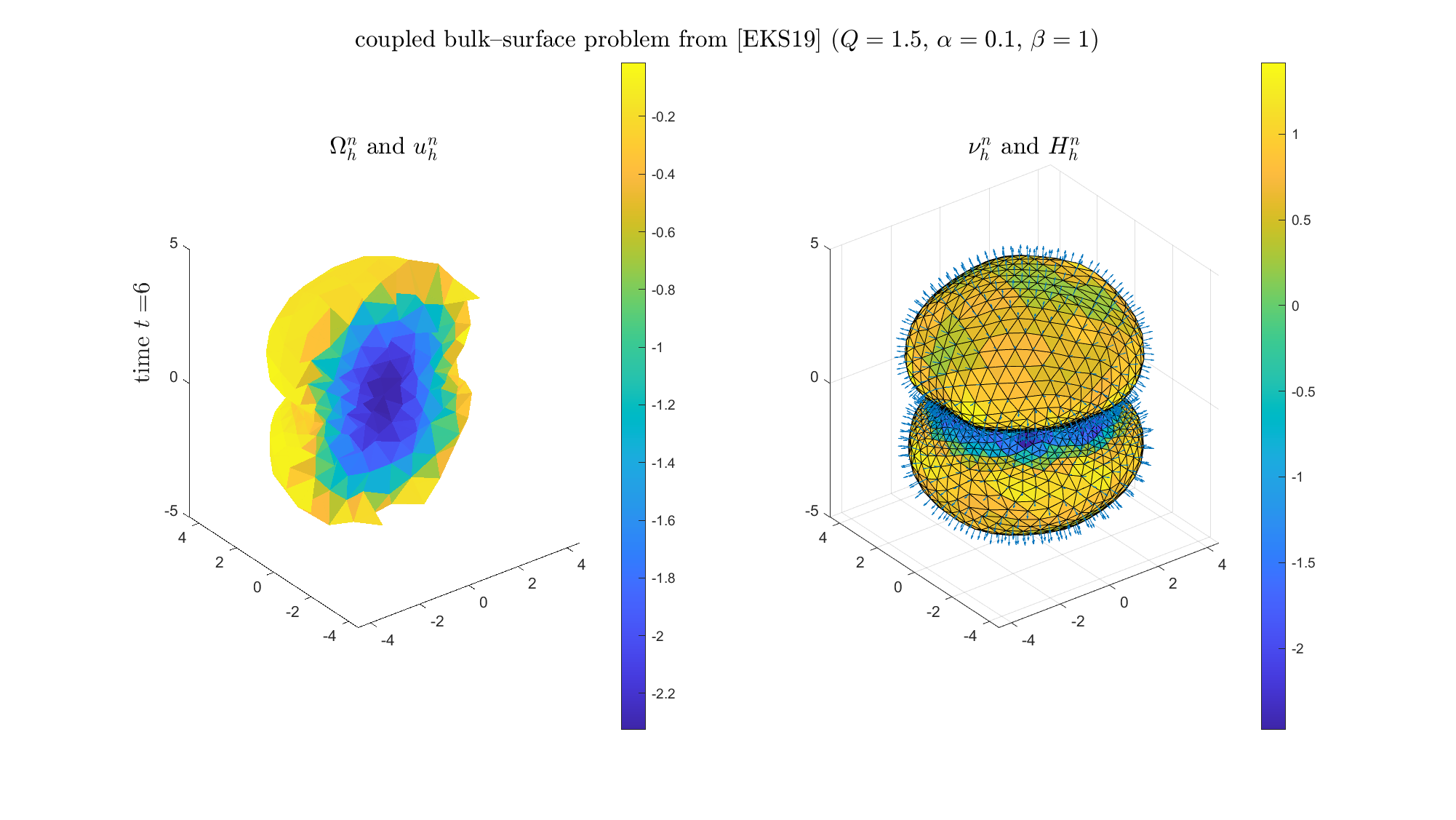}
\caption{Numerical solution of the coupled bulk--surface problem with $\alpha = 10$ and $\beta = 1$.}
\label{fig:EKS_alpha01_beta1}
\end{figure}

\begin{figure}[htbp]
\centering
\includegraphics[width=0.7\textwidth,clip,trim={0 50 0 20}]{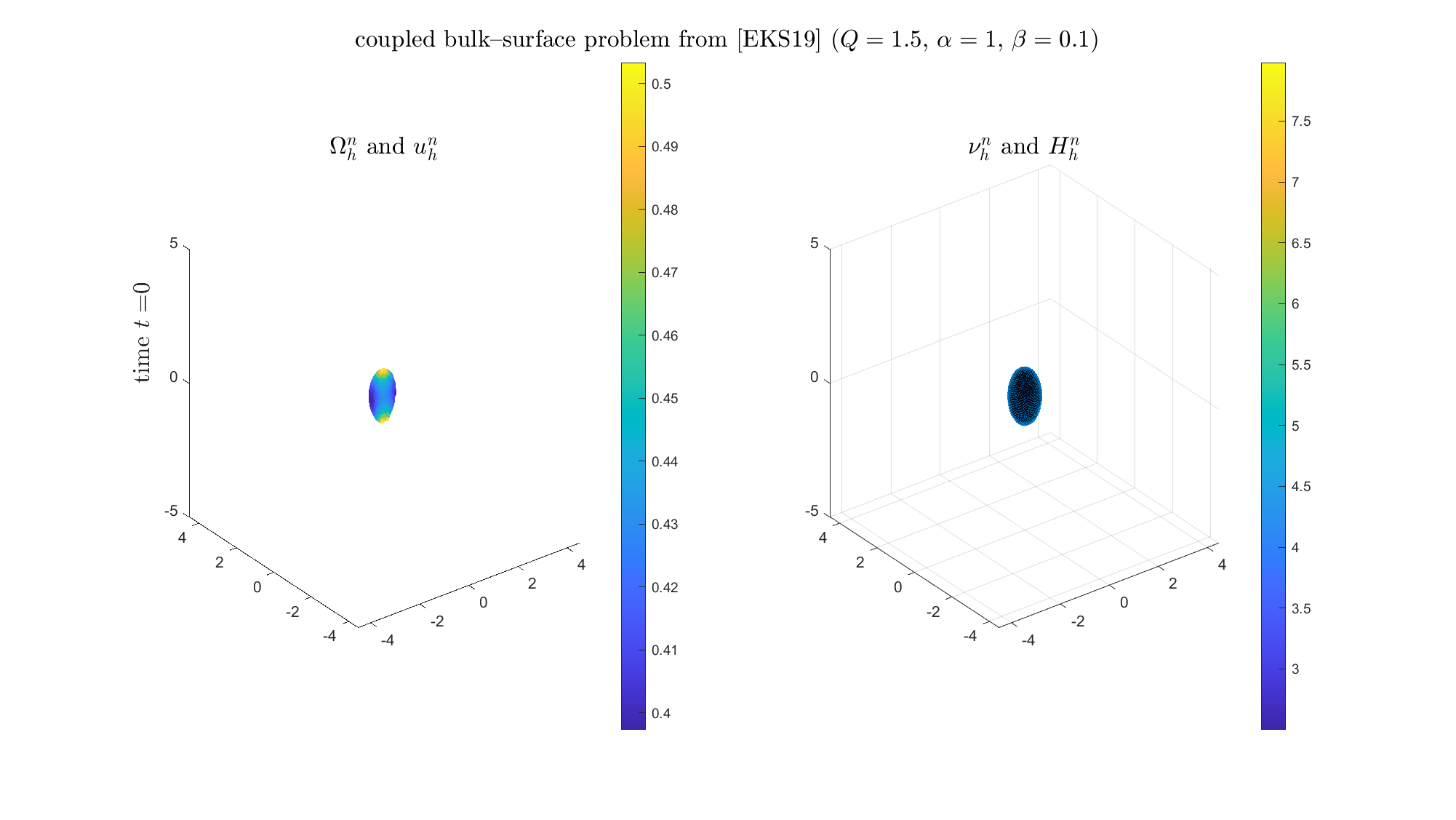}
\includegraphics[width=0.7\textwidth,clip,trim={0 50 0 40}]{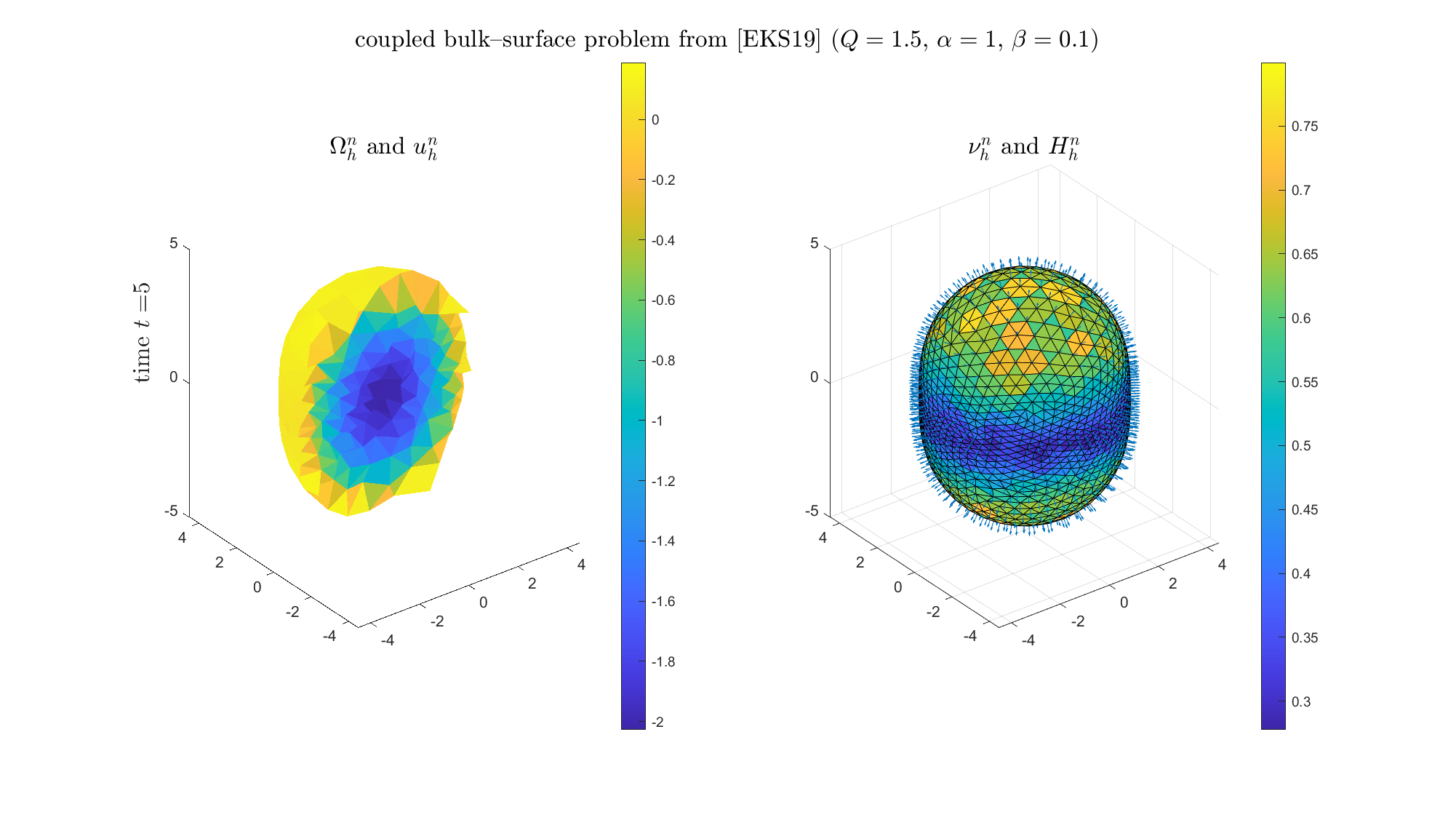}
\includegraphics[width=0.7\textwidth,clip,trim={0 50 0 40}]{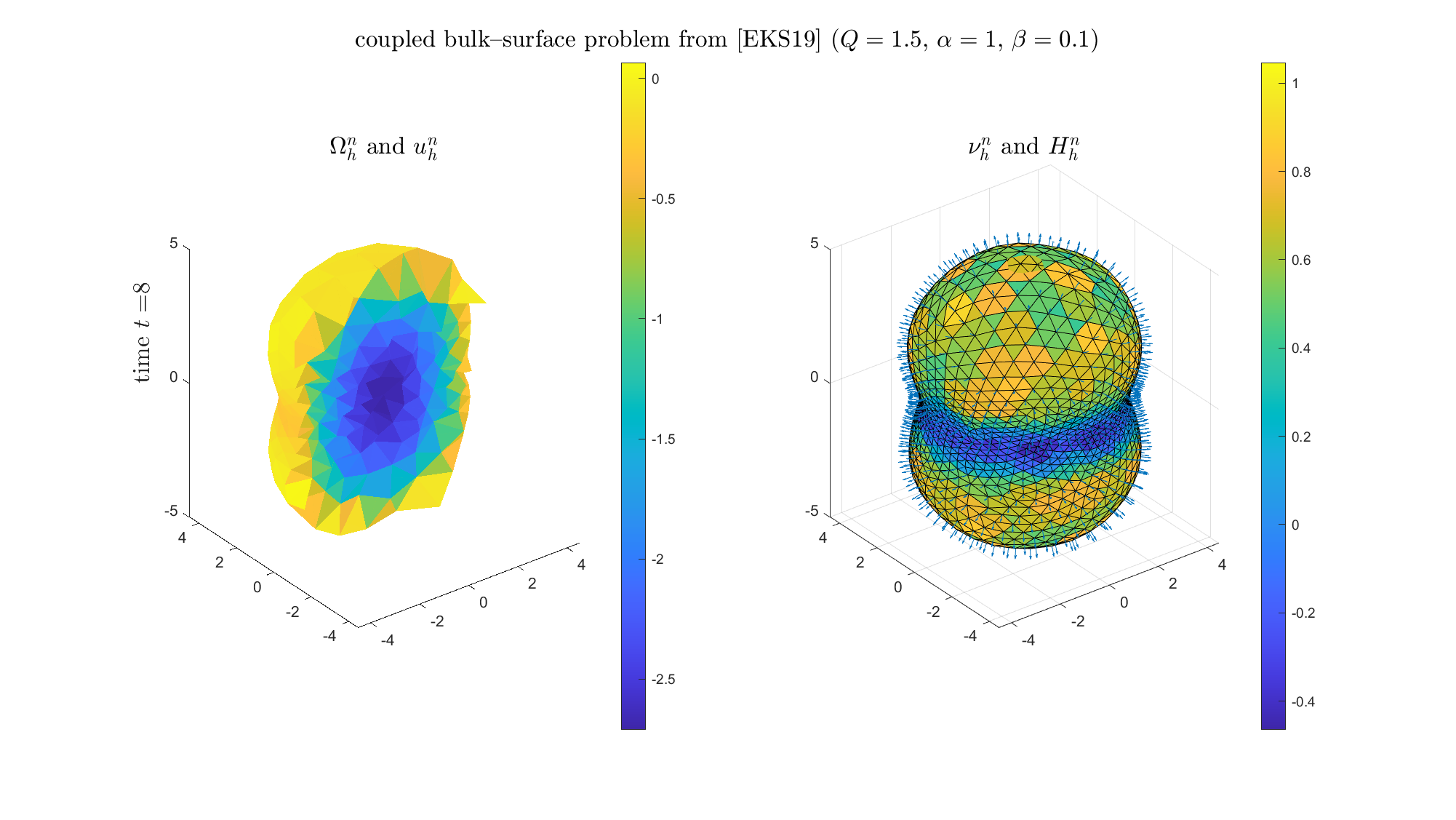}
\includegraphics[width=0.7\textwidth,clip,trim={0 50 0 40}]{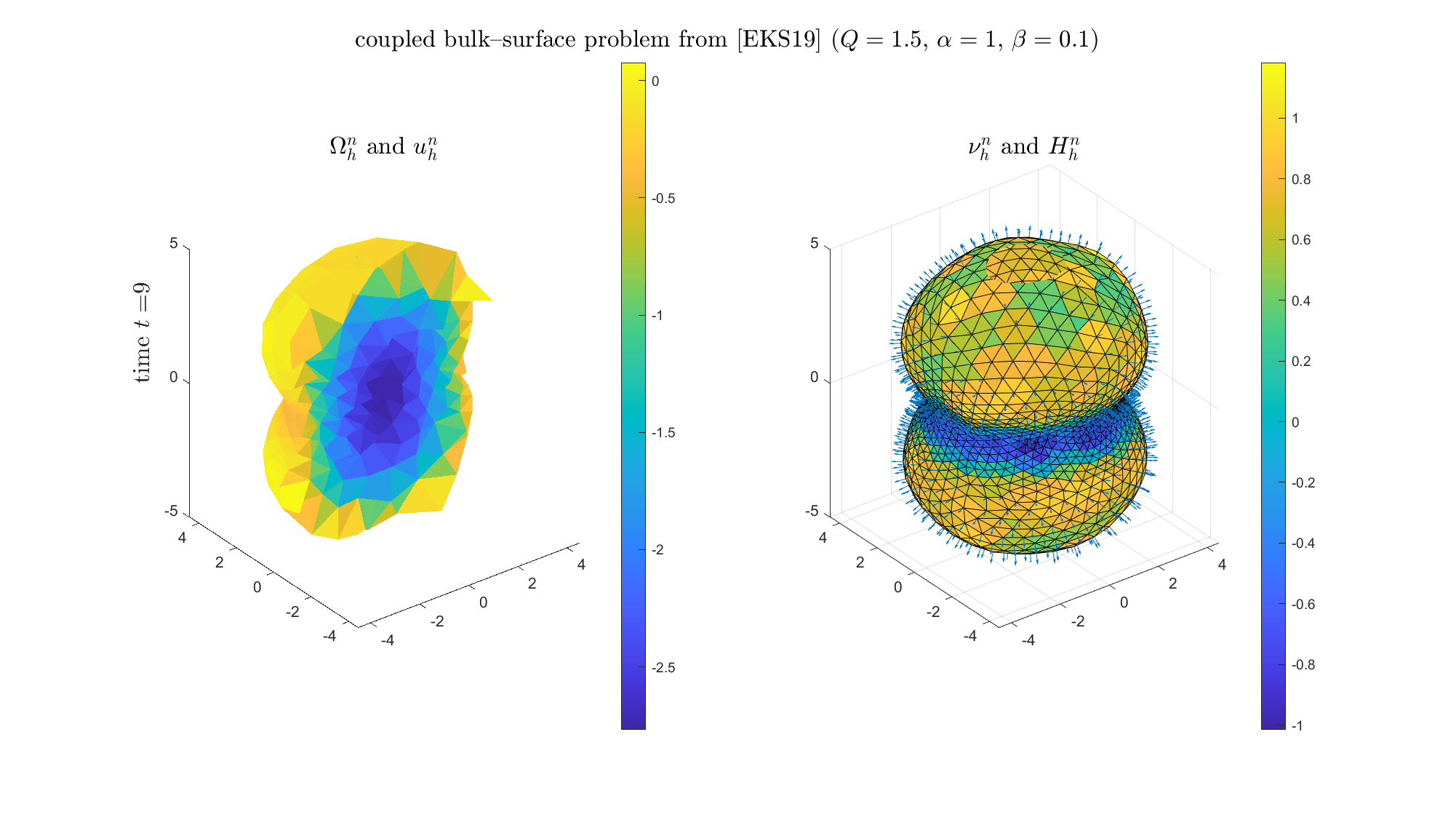}
\caption{Numerical solution of the coupled bulk--surface problem with $\alpha = 1$ and $\beta = 0.1$.}
\label{fig:EKS_alpha1_beta01}
\end{figure}

\subsection{Experiments on the effect of the regularization}

We performed a numerical experiment reporting on the effect of the regularization  (cf.~Section~\ref{section:regularization}). 

The three columns of Figure~\ref{fig:EKS_reg} show the numerical solution with varying regularization parameter $\mu = 0$, $0.01$, $0.1$, $1$ (from left to right) with a fixed source $Q = 1.5$, and fixed model parameters $\alpha = 1$ and $\beta = 1$. 
The initial data were generated exactly as before. The bulk and surface mesh has $7162$ and $1430$ degrees-of-freedom, respectively, while the time step size is set to $\tau = 10^{-3}$.
As observable in Figure~\ref{fig:EKS_reg}, the regularization (even with $\mu = 1$) has minimal effects both on the bulk--surface mesh evolution and the surface variable $u_h^n$.

\begin{figure}[htbp]
\includegraphics[width=\textwidth,clip,trim={0 140 0 110}]{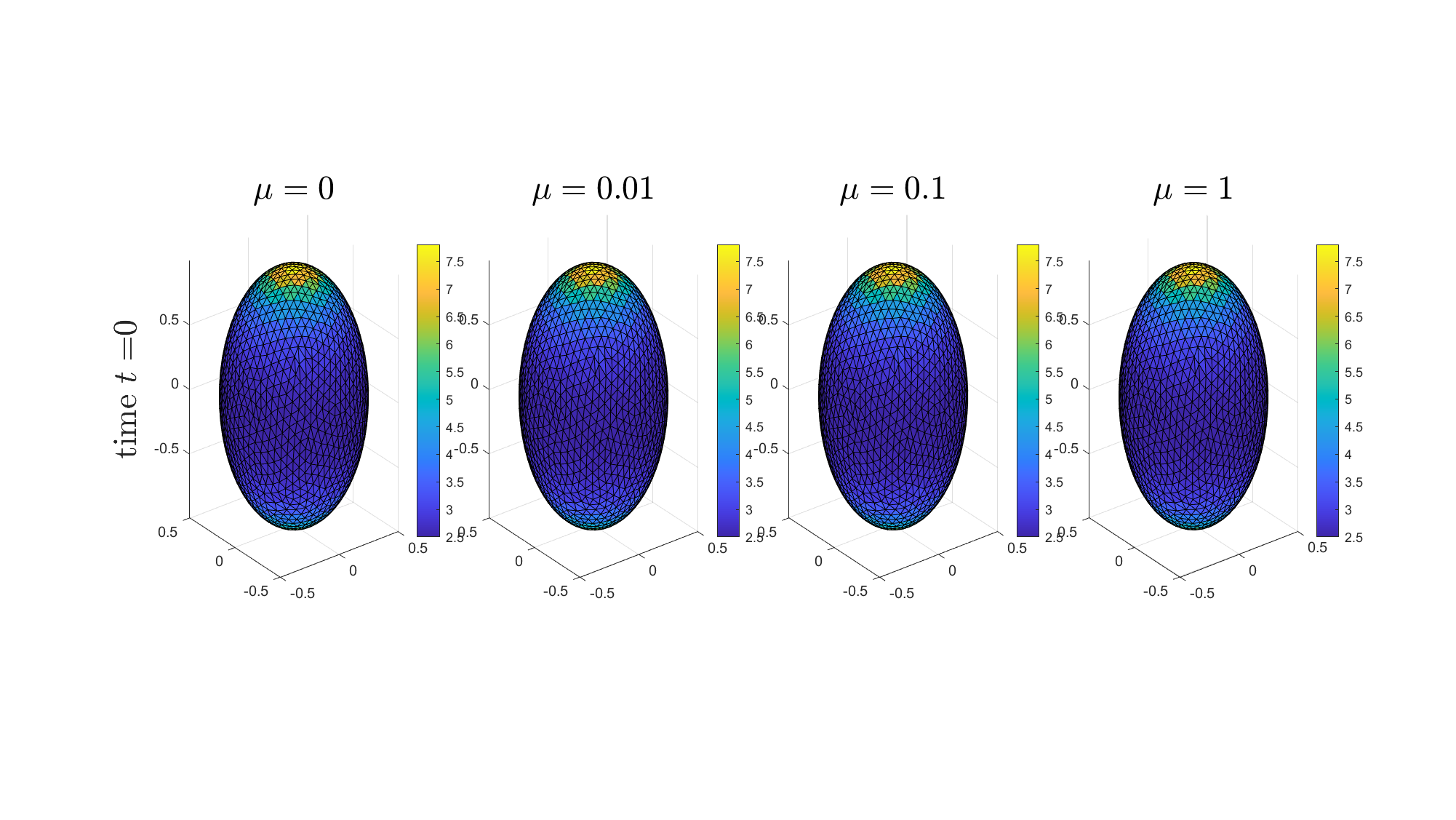}
\includegraphics[width=\textwidth,clip,trim={4 170 18 150}]{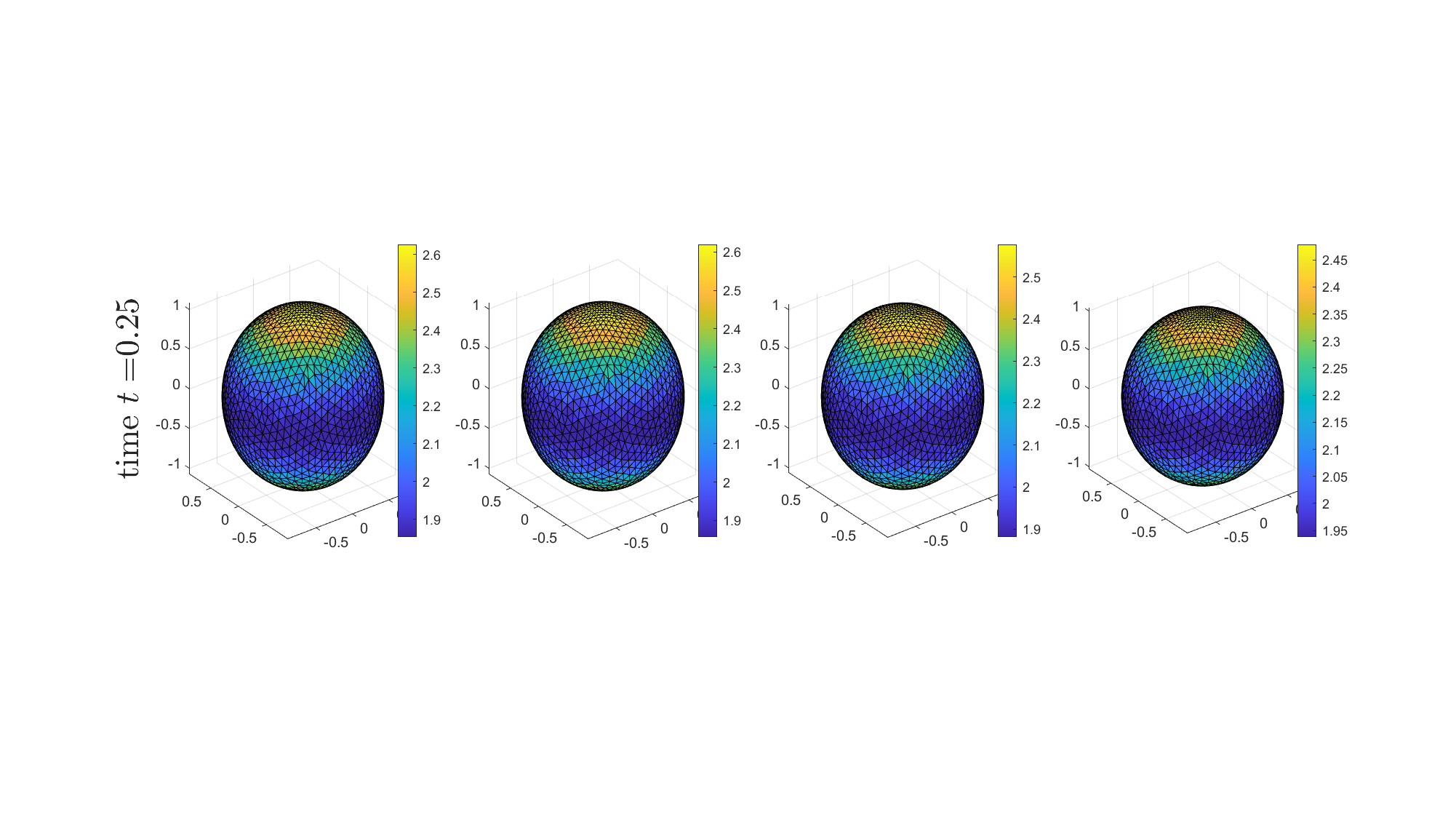}
\includegraphics[width=\textwidth,clip,trim={0 170 0 150}]{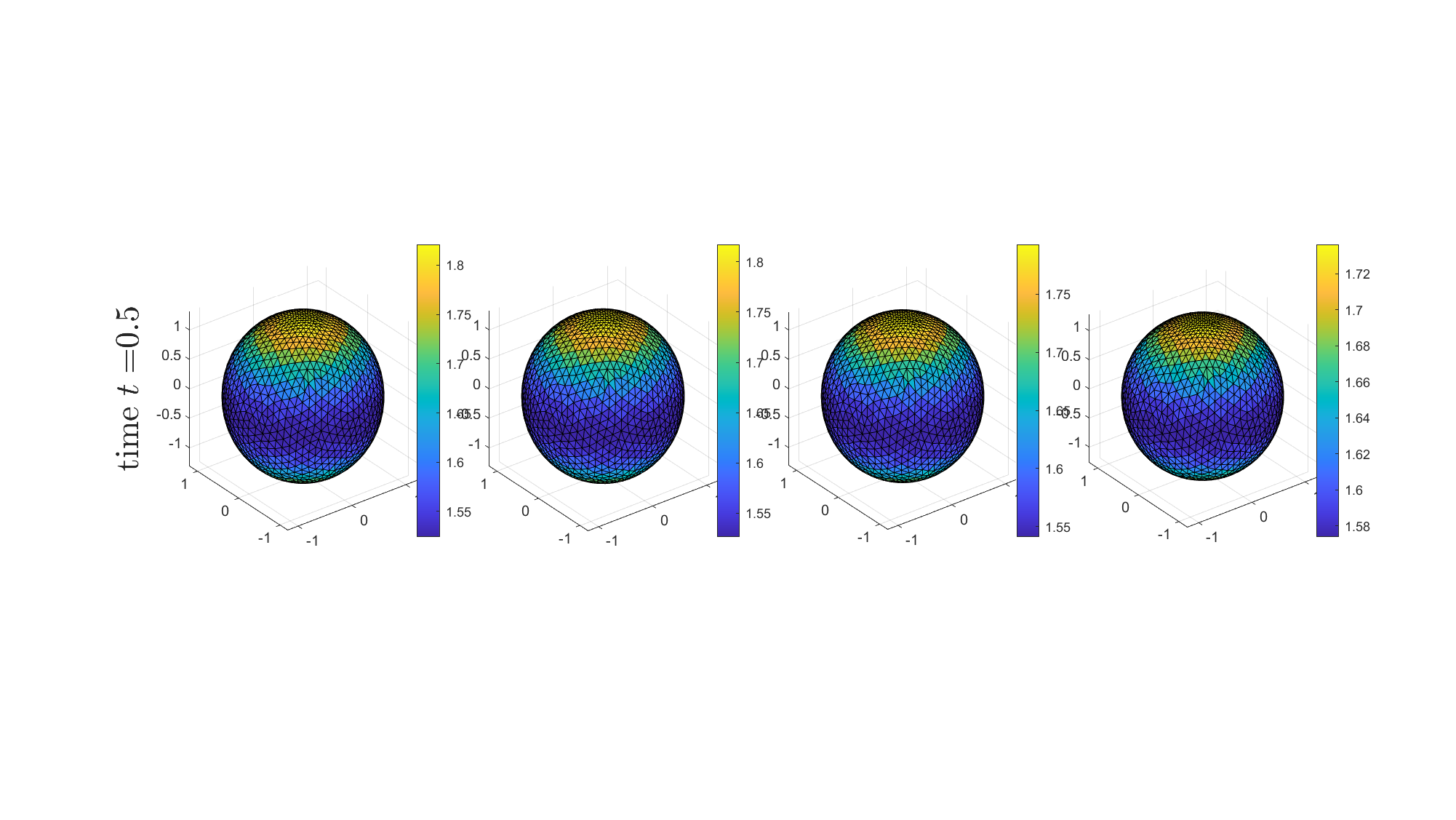}
\includegraphics[width=\textwidth,clip,trim={0 170 0 150}]{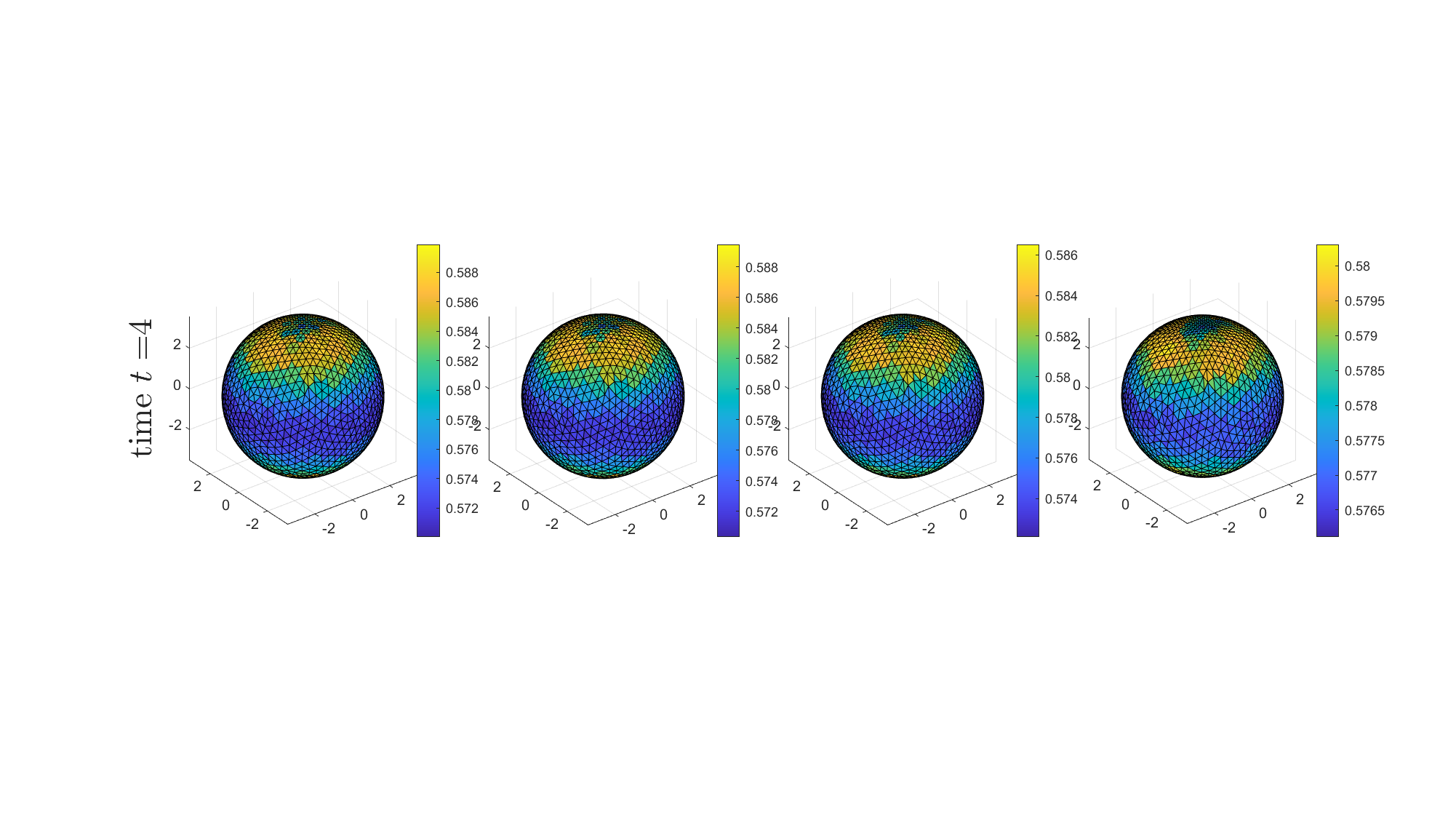}
\includegraphics[width=\textwidth,clip,trim={0 170 0 150}]{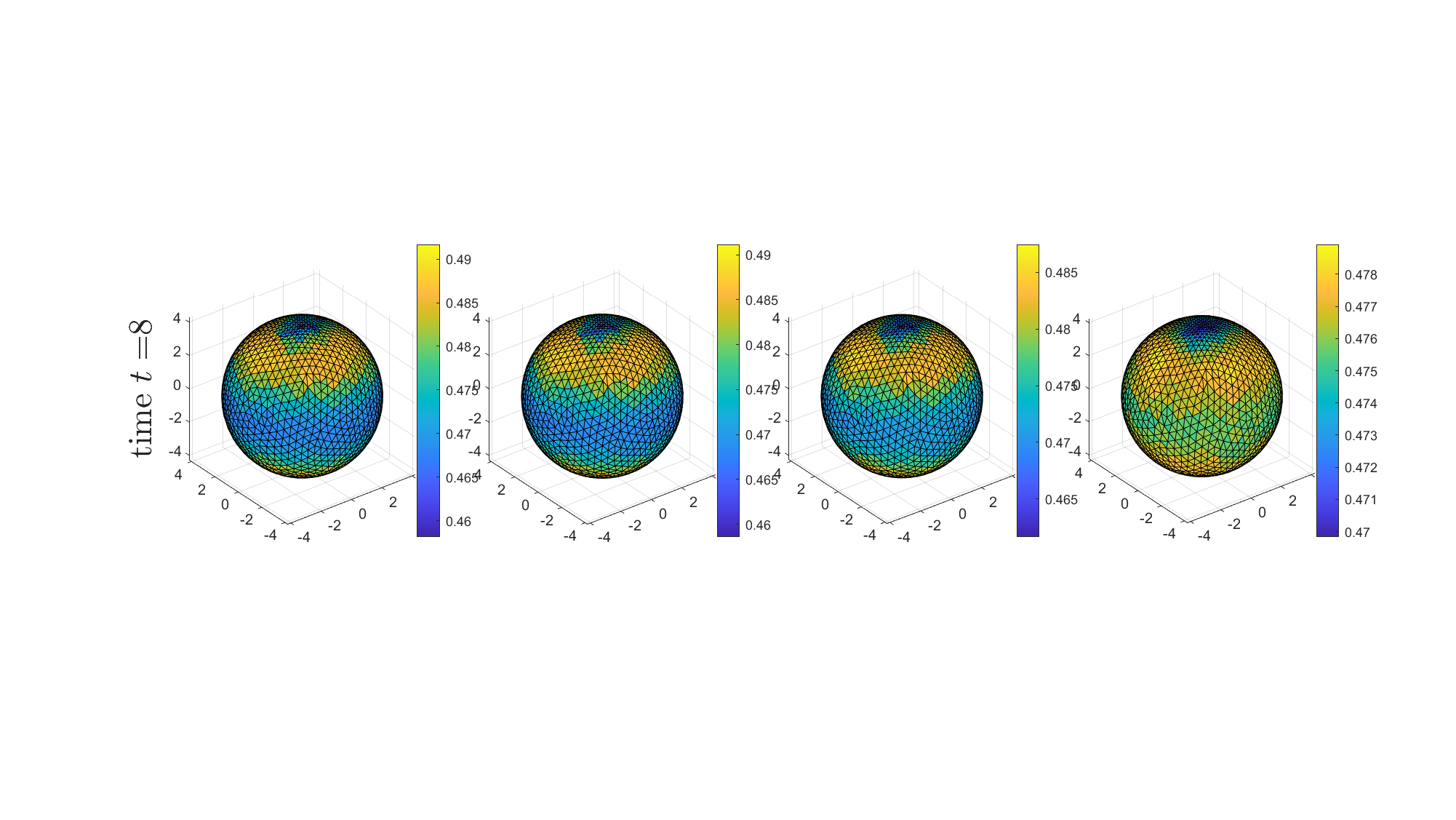}
\caption{Numerical solution ($\Ga_h^n$ and $u_h^n$) of the coupled bulk--surface problem with varying regularization parameter $\mu = 0$, $0.01$, $0.1$, $1$ (column-wise from left to right) $\alpha = 1$ and $\beta = 1$.}
\label{fig:EKS_reg}
\end{figure}

%\section*{Appendix}
%\begin{lemma}
%	Let $a(u,v) = (\nb u, \nb v)$ and $l : H_0^1(\Om) \to \R$ be a linear form, i.~e. an element of the dual $(H_0^1(\Om))^*$. Let $u$ be the solution to
%	\begin{equation}
%	\label{eq:inhomogeneous dirichlet}
%	\begin{aligned}
	%		a(u,v) &= l(v) \quad \forall \ v \in H_0^1(\Om) \,,\\
	%		 u &=g \quad \text{on } \Ga
	%	\end{aligned}
%	\end{equation}
%	Then we have
%	\[ \| u \|_{H^1(\Om)} \le c \| l \|_\star + c \| g \|_{H^{1/2}(\Ga)} \,,\]
%	where the dual norm is defined by
%	\[ \| l \|_\star = \| l \|_{(H_0^1(\Om))*} = \sup_{v \in H_0^1(\Om)} \frac{l(v)}{\|v\|_{H^1(\Om)}} \,.\]
%\end{lemma}
%\begin{proof}
%	Let $u_1$ be the solution to $- \Delta u_1 = 0$ on $\Ga$ with $u_1=g$ on $\Ga$. $u_1$ satisfies
%	\[ \| u_1\|_{H^1(\Om)} \le c \|g\|_{H^{1/2}(\Ga)} \,.\]
%	Let $u_2 \in H_0^1(\Om)$ be the solution of \eqref{eq:inhomogeneous dirichlet} with boundary data $g=0$. Then $u_2$ satisfies
%	\[ \| u_2 \|_{H^1(\Om)}^2 = a(u_2,u_2)=l(u_2) \le \|l\|_\star \|u_2 \|_{H^1(\Om)} \]
%	i.e. $\|u_2\|_{H^1(\Om)} \le \|l\|_\star$. The assertion follows now from the fact that $u=u_1+u_2$ satisfies \eqref{eq:inhomogeneous dirichlet}.
%\end{proof}

%\clearpage

\section{Appendix: Stability bounds of finite element approximations to the Laplace equation with inhomogeneous boundary conditions}
\label{section:laplace} 

We give regularity estimates of finite element approximations to the Laplace equation with inhomogeneous Dirichlet and Robin boundary conditions. Such bounds play an important role in the stability analysis of the numerical method for the tumour growth model and are also of independent interest. As we are not aware of these finite element stability results in the literature, we present them here together with their proofs.

\subsection{Dirichlet problem}

Consider the Dirichlet problem 
%on a sufficiently regular bounded domain $\varOmega\subset\R^3$ with boundary $\Ga$:
\begin{subequations}
\begin{align}
	\label{dir-pde}
	-\Delta u &=0 \quad\text{ in }\varOmega, 
	\\
	\label{dir-bc}
	u&=g \quad\text{ on } \Ga .
\end{align}
\end{subequations}
It follows directly from the weak formulation and the trace theorem that
%well-known (and can be found e.g. in the PDE book by Taylor, p.360) that
\begin{equation}\label{u-g-bound}
\| u \|_{H^{1}(\varOmega)} \le C\, \| g \|_{H^{1/2}(\Ga)}.
\end{equation}
Consider now a quasi-uniform finite element discretization of the Dirichlet problem, with boundary data given by the finite element function $g_h$ on the boundary $\Ga_h$ of the 
computational domain $\varOmega_h$ obtained by quasi-uniform finite element interpolation of the boundary $\Ga$ of $\varOmega$. There we have the bulk finite element space $\calV_h$ and
the boundary finite element space $\calS_h$, which is the trace space of $\calV_h$: for every $v_h\in \calV_h$,
its trace $\gamma_h v_h$ is in $\calS_h$. We let $\calV_h^0$ be the subspace of $\calV_h$ with trace $0$.
For a given $g_h\in \calS_h$, the finite element approximation $u_h\in \calV_h$ is determined by
$$
\int_{\Om_h} \nabla u_h \cdot \nabla \varphi_h =0 \quad \text{ for all }\, \varphi_h \in \calV_h^0, \qquad \gamma_h u_h= g_h.
$$
There is a discrete bound analogous to \eqref{u-g-bound}.

\begin{proposition} \label{prop:Dirichlet-h} In the above situation, the finite element approximation $u_h$ obtained from a  quasi-uniform and shape-regular family of triangulations is bounded by
\begin{equation}\label{uh-gh-bound}
	\qquad \| u_h \|_{H^1(\varOmega_h)} \le C\, \| g_h \|_{H^{1/2}(\Ga_h)},
\end{equation}
where $C$ depends on bounds of finitely many derivatives of a parametrization of the boundary surface $\Ga$ and on the quasi-uniformity and shape regularity bounds, but is independent of $h$ and $g_h\in \calS_h \subset H^{1/2}(\Ga_h)$.
\end{proposition}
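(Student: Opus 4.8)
The plan is to exploit that $u_h$ is the \emph{discrete harmonic extension} of $g_h$, hence the minimiser of the Dirichlet energy $v_h\mapsto\|\nabla v_h\|_{L^2(\varOmega_h)}$ over all $v_h\in\calV_h$ with $\gamma_h v_h=g_h$ (this follows from the Galerkin orthogonality $\int_{\varOmega_h}\nabla u_h\cdot\nabla w_h=0$ for $w_h\in\calV_h^0$, writing $v_h=u_h+w_h$ and expanding the square). Consequently $\|\nabla u_h\|_{L^2(\varOmega_h)}\le\|\nabla v_h\|_{L^2(\varOmega_h)}$ for every such $v_h$; writing $u_h=v_h+(u_h-v_h)$ with $u_h-v_h\in\calV_h^0$ and using the Friedrichs inequality on $\varOmega_h$ (with an $h$-uniform constant, the domains $\varOmega_h$ being contained in a fixed ball) gives $\|u_h\|_{H^1(\varOmega_h)}\le C\|v_h\|_{H^1(\varOmega_h)}$. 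So it suffices to construct, for each $g_h\in\calS_h$, \emph{one} finite element function $v_h\in\calV_h$ with $\gamma_h v_h=g_h$ and $\|v_h\|_{H^1(\varOmega_h)}\le C\|g_h\|_{H^{1/2}(\Gamma_h)}$, with $C$ independent of $h$ and $g_h$.

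Such a $v_h$ is obtained in two steps: a stable \emph{continuous} extension followed by a trace-preserving quasi-interpolation. For the first step I would transport the problem to the fixed smooth domain $\varOmega$ via the bulk lift $G_h\colon\varOmega_h\to\varOmega$ of \eqref{eq:bulk mapping}, which is piecewise $C^1$, globally bi-Lipschitz with $\|\nabla G_h-I\|_{L^\infty}\le Ch$ on each element, and restricts to a bi-Lipschitz map $\Gamma_h\to\Gamma$; hence $\|w\circ G_h^{-1}\|_{H^1(\varOmega)}\simeq\|w\|_{H^1(\varOmega_h)}$ and $\|\phi\circ G_h^{-1}\|_{H^{1/2}(\Gamma)}\simeq\|\phi\|_{H^{1/2}(\Gamma_h)}$ with $h$-uniform equivalence constants. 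Let $E\colon H^{1/2}(\Gamma)\to H^1(\varOmega)$ be a bounded right inverse of the trace operator on the fixed domain $\varOmega$ (the trace theorem, cf.~\eqref{u-g-bound}), and set $\widetilde u:=\bigl(E(g_h\circ G_h^{-1})\bigr)\circ G_h\in H^1(\varOmega_h)$. Then $\gamma\widetilde u=g_h$ on $\Gamma_h$ and $\|\widetilde u\|_{H^1(\varOmega_h)}\le C\|g_h\|_{H^{1/2}(\Gamma_h)}$. It is essential here that we treat $g_h$ only as an element of $H^{1/2}$ and do not invoke its extra finite element smoothness, since that would cost an inverse-estimate factor $h^{-1}$.

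For the second step, let $\Pi_h\colon H^1(\varOmega_h)\to\calV_h$ be a Scott--Zhang-type quasi-interpolation operator on the curved mesh $\calT_h$, constructed so that the averaging functionals at boundary nodes are supported on boundary faces. By quasi-uniformity and shape regularity, the usual scaling argument on the reference tetrahedron gives the $h$-uniform stability $\|\Pi_h w\|_{H^1(\varOmega_h)}\le C\|w\|_{H^1(\varOmega_h)}$, and the choice of boundary functionals guarantees that $\Pi_h$ reproduces discrete boundary traces, i.e.~$\gamma_h\Pi_h w=\gamma_h w$ whenever $\gamma_h w\in\calS_h$. Taking $v_h:=\Pi_h\widetilde u$ yields $\gamma_h v_h=g_h$ and $\|v_h\|_{H^1(\varOmega_h)}\le C\|\widetilde u\|_{H^1(\varOmega_h)}\le C\|g_h\|_{H^{1/2}(\Gamma_h)}$, which combined with the first paragraph proves \eqref{uh-gh-bound}.

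The main obstacle is the $h$-uniformity of all constants on the family of curved (isoparametric) computational domains. Concretely: (i) one must verify that the Scott--Zhang construction, together with its $H^1$-stability and the boundary-trace reproduction property, carries over uniformly in $h$ to elements of degree $k$, which reduces to estimates on the reference element using the uniform bounds on $G_h$ and on the maps $F_E$, $F_e$; and (ii) one must make precise the space $H^{1/2}(\Gamma_h)$ on the curved surface $\Gamma_h$ and the claim that $G_h|_{\Gamma_h}$ carries it to $H^{1/2}(\Gamma)$ with $h$-uniform norm equivalence, for which the bound $\|\nabla G_h-I\|_{L^\infty}\le Ch$ (so that the Lipschitz constants of $G_h^{\pm1}$ tend to $1$) is the decisive input. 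The remaining ingredients --- energy minimisation, the Friedrichs inequality on $\varOmega_h$, and the trace right-inverse on the fixed domain $\varOmega$ --- are standard.
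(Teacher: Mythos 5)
Your proposal is correct and follows essentially the same route as the paper: reduce via the Galerkin/energy argument to constructing one discrete extension of $g_h$ with $H^1$ norm bounded by $\|g_h\|_{H^{1/2}(\Ga_h)}$, and build it as a trace-preserving (isoparametric Scott--Zhang) quasi-interpolant of the lifted-then-harmonically-extended boundary data, using the $h$-uniform $H^1$ and $H^{1/2}$ norm equivalences under the lift. The only cosmetic difference is that you phrase the comparison with the constructed extension via energy minimisation plus Friedrichs, while the paper solves for the correction $w_h\in\calV_h^0$ and tests with it; these are the same estimate.
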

For linear finite elements on a two-dimensional polygonal domain $\varOmega_h$, such a bound was proved by Bramble, Pasciak \& Schatz \citep{BrPS86}, Lemma 3.2, and for linear finite elements on smooth domains, with a polygonal computational domain $\varOmega_h$, this was proved by Bramble \& King \citep{BrK94} (as an immediate corollary to their Theorem~1). We are not aware of a proof in the literature of the more general result for quadratic or higher-degree finite elements with non-polygonal computational domain, nor of the result in the three-dimensional case.

\medskip\noindent
\begin{proof} %We use the following notation:
%For $v_h\in S_h(\varOmega_h)$ and $g_h\in S_h(\Ga_h)$ we let $v_h^\ell$ and $g_h^\ell$ be the lifts to the smooth domain $\varOmega$ and its boundary $\Ga$, respectively. 
Let $\widetilde{I}_h^{\textnormal{SZ}} \colon H^1(\varOmega_h)\to \calV_h$ be the isoparametric Scott--Zhang finite element interpolation operator, see \cite[Section~4.4]{HansboLarsonLarsson}, and see \citep{ScottZhang} for the simpler case of polygonal domains, as well as \citep{CamachoDemlow} for the case of smooth closed surfaces.
%, and let $\widetilde{I}_h^{\textnormal{SZ}}$ be its lift to $\varOmega$: $\widetilde{I}_h^{\textnormal{SZ}} v = (\wt \widetilde{I}_h^{\textnormal{SZ}} v)^\ell$.  
The operator $\widetilde{I}_h^{\textnormal{SZ}}$ is a stable projection that preserves nodal values, in particular boundary values in $\calS_h$, i.e.~$\trh \widetilde{I}_h^{\textnormal{SZ}} v_h= \trh v_h$ for all $v_h\in \calV_h$.

Let $E \colon H^{1/2}(\Ga) \to H^1(\varOmega)$ be the solution operator of the Dirichlet problem: $u=Eg$, with $\|E\|_{H^1(\Om) \leftarrow H^{1/2}(\Ga)} \leq C$ by \eqref{u-g-bound}.  
We consider a discrete harmonic extension operator $E_h \colon  S_h(\Ga_h) \to S_h(\varOmega_h)$ given by
$$
E_h g_h = \widetilde{I}_h^{\textnormal{SZ}} (E g_h^\ell)^{-\ell}.
$$
Note that $\gamma_h E_h g_h = g_h$.
The finite element approximation $u_h$ is defined equivalently by
$u_h= E_h g_h +w_h$, where $w_h\in \calV_h^0$ satisfies
$$
\int_{\Om_h} \nabla w_h \cdot \nabla \varphi_h = -\int_{\Om_h} \nabla (E_h g_h) \cdot \nabla \varphi_h\qquad \text{for all } \, \varphi_h \in \calV_h^0.
$$
With $\varphi_h=w_h$, we obtain immediately
$$
\| w_h \|_{H^1_0(\varOmega_h)} \le \|  E_h g_h \|_{H^1(\varOmega_h)} =:\alpha,
$$
and so we have $\| u_h \|_{H^1(\varOmega_h)} \le 2\alpha$.
We estimate (with different constants $C$)  
\begin{align*}
	\alpha &= \|  \widetilde{I}_h^{\textnormal{SZ}} (E g_h^\ell)^{-\ell} \|_{H^1(\varOmega_h)} \le C \, \|  (E g_h^\ell)^{-\ell} \|_{H^1(\varOmega_h)} 
\end{align*} 
by the stability of the isoparametric Scott--Zhang interpolation, 
%obtained via the estimate $\| v - \wt \widetilde{I}_h^{\textnormal{SZ}} v \|_{H^1(\varOmega_h)} \leq c \| v \|_{H^1(\varOmega_h)}$, 
see \cite[Eq.~(4.45)]{HansboLarsonLarsson} with $m=s=1$;
$$
\le \|  E g_h^\ell \|_{H^1(\varOmega)}
$$
by the $H^1$ norm equivalence of lifted functions;
$$
\le C\, \|    g_h^\ell \|_{H^{1/2}(\Ga)} 
$$
by the bound \eqref{u-g-bound};  
$$
\le C\, \|    g_h \|_{H^{1/2}(\Ga_h)} 
$$
by the $H^{1/2}$ norm equivalence of the lift. 
This proves \eqref{uh-gh-bound}.
\end{proof}

\subsection{Robin boundary value problem}

Consider the Robin boundary value problem 
\begin{subequations}
\label{robin-aux}
\begin{align}
	\label{robin-aux-pde}
	- \Delta u &= 0 \quad\text{ in }\Om \,,\\
	\label{robin-aux-bc}
	\dnu{u} +  u   &= g \quad\text{ on }\Ga ,
\end{align}
\end{subequations}
with the weak formulation to find $u\in H^1(\Om)$ such that
\begin{equation} \label{robin-weak-aux}
\int_{\Om} \nb u \cdot \nb \varphi +  \int_{\Ga} \tr u \;\tr \varphi =  \int_{\Ga} g \;\tr \varphi \quad\ \text{ for all $\varphi\in H^1(\varOmega)$}.
\end{equation}
We are particularly interested in the boundary value map $g\mapsto \gamma u$. We recall the following regularity result, 
which is readily obtained from e.g.~\cite[Chapter~5]{Taylor}. 

\begin{lemma} \label{lem:robin}
Let $\Om$ be a sufficiently regular bounded domain with boundary $\Ga$. For every $g \in L^2(\Ga)$, 
%the trace $\tr u$ of the weak solution of the Robin boundary value problem \eqref{robin-aux} is bounded by
the Robin boundary value problem \eqref{robin-aux} has a weak solution $u\in H^{3/2}(\Om)$, and its trace is bounded by
\begin{equation}
	\label{eq:robin - solution bounds}
	%	\| u \|_{H^{3/2}(\Om)} \le C \|g\|_{L^2(\Ga)} \andquad  
	\| \tr u \|_{H^1(\Ga)} \le C \|g\|_{L^2(\Ga)},
\end{equation}
where $C$ depends on bounds of finitely many derivatives of a parametrization of the boundary surface $\Ga$.
\end{lemma}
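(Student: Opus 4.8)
The plan is to reduce the Robin problem to a pseudodifferential equation on the boundary $\Ga$ for the unknown trace $\gamma u$, and then to recover $u$ as the harmonic extension of that trace. First I would establish existence and uniqueness of a weak solution $u\in H^1(\Om)$ by the Lax--Milgram lemma: the bilinear form on the left of \eqref{robin-weak-aux} is bounded on $H^1(\Om)$ and coercive, because $v\mapsto \big(\|\nb v\|_{L^2(\Om)}^2+\|\gamma v\|_{L^2(\Ga)}^2\big)^{1/2}$ is an equivalent norm on $H^1(\Om)$; the right-hand side $\varphi\mapsto\int_\Ga g\,\gamma\varphi$ is a bounded functional on $H^1(\Om)$ since $g\in L^2(\Ga)$ and the trace is continuous $H^1(\Om)\to H^{1/2}(\Ga)\hookrightarrow L^2(\Ga)$. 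Testing \eqref{robin-weak-aux} with $\varphi=u$ and using Young's inequality gives the a priori bound $\|u\|_{H^1(\Om)}\le C\|g\|_{L^2(\Ga)}$.

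Next I would introduce the Dirichlet-to-Neumann operator on $\Ga$: for $\phi$ on $\Ga$ let $w[\phi]$ denote the harmonic extension of $\phi$ into $\Om$ and set $\Lambda\phi=\dnu w[\phi]|_\Ga$. By the theory collected in \cite[Chapter~5]{Taylor}, $\Lambda$ is a classical, self-adjoint, elliptic pseudodifferential operator of order $1$ on $\Ga$, mapping $H^s(\Ga)$ continuously into $H^{s-1}(\Ga)$ for every $s$, with $\langle\Lambda\phi,\phi\rangle_{L^2(\Ga)}=\|\nb w[\phi]\|_{L^2(\Om)}^2\ge 0$, and with symbol bounds depending only on finitely many derivatives of a parametrization of $\Ga$. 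The key observation is that $u$ solves \eqref{robin-aux} weakly if and only if $u=w[\gamma u]$ and $(\Lambda+I)(\gamma u)=g$, since the weak formulation \eqref{robin-weak-aux} encodes exactly $-\Delta u=0$ in $\Om$ together with $\dnu u+\gamma u=g$ on $\Ga$.

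I would then argue that $\Lambda+I$ has the same (elliptic) principal symbol as $\Lambda$, so it is Fredholm of index zero from $H^s(\Ga)$ to $H^{s-1}(\Ga)$, and it is injective because $\langle(\Lambda+I)\phi,\phi\rangle_{L^2(\Ga)}=\|\nb w[\phi]\|_{L^2(\Om)}^2+\|\phi\|_{L^2(\Ga)}^2>0$ for $\phi\ne0$. Hence $\Lambda+I$ is boundedly invertible and $(\Lambda+I)^{-1}$ is a pseudodifferential operator of order $-1$, continuous $H^{s}(\Ga)\to H^{s+1}(\Ga)$. Taking $s=0$ gives $\gamma u=(\Lambda+I)^{-1}g\in H^1(\Ga)$ with $\|\gamma u\|_{H^1(\Ga)}\le C\|g\|_{L^2(\Ga)}$, which is the asserted bound. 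Finally, $u=w[\gamma u]$ is the harmonic extension of a function in $H^1(\Ga)$, so by elliptic regularity for the Dirichlet problem $u\in H^{3/2}(\Om)$ with $\|u\|_{H^{3/2}(\Om)}\le C\|\gamma u\|_{H^1(\Ga)}\le C\|g\|_{L^2(\Ga)}$.

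The main obstacle is the pseudodifferential input: verifying that $\Lambda$ is a classical elliptic operator of order $1$ on $\Ga$ and that order-$1$ elliptic pseudodifferential operators are Fredholm of index zero between the corresponding Sobolev spaces, together with the attendant symbol and constant bookkeeping. These are precisely the facts assembled in \cite[Chapter~5]{Taylor}. A more elementary but more laborious alternative would localize near $\Ga$, flatten the boundary, gain one tangential derivative for $\gamma u$ via tangential difference quotients, and then read off the normal regularity from $\Delta u=0$; the endpoint space $H^{3/2}(\Om)$, rather than $H^2$, is however handled most transparently through the Dirichlet-to-Neumann reduction just described.
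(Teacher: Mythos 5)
Your argument is correct, but it takes a genuinely different route from the paper. The paper's proof is a two-line interpolation argument: Lax--Milgram plus the trace theorem give boundedness of the solution map $g\mapsto \tr u$ from $H^{-1/2}(\Ga)$ to $H^{1/2}(\Ga)$, the $H^2(\Om)$ regularity theory for the Robin problem (cited from Taylor, Chapter~5) gives boundedness from $H^{1/2}(\Ga)$ to $H^{3/2}(\Ga)$, and interpolation of Sobolev spaces yields exactly the intermediate bound $L^2(\Ga)\to H^1(\Ga)$ together with $u\in H^{3/2}(\Om)$. You instead reduce to the boundary equation $(\Lambda+I)\,\tr u=g$ for the Dirichlet-to-Neumann operator $\Lambda$, invert the elliptic first-order pseudodifferential operator $\Lambda+I$ to gain one derivative on $\Ga$, and then recover the bulk regularity from the harmonic (Poisson) extension of $H^1(\Ga)$ data. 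Both are sound; your route buys a transparent explanation of why exactly one boundary derivative is gained and gives the whole scale $H^{s-1}(\Ga)\to H^s(\Ga)$ at once, at the price of invoking the $\Psi$DO calculus for $\Lambda$ and the Poisson operator (note that the mapping property $H^1(\Ga)\to H^{3/2}(\Om)$ of the harmonic extension is itself usually obtained by interpolation or Poisson-operator theory, so interpolation is not entirely avoided), whereas the paper's route needs only the standard $H^2$ regularity result plus abstract interpolation. Two small points where your write-up could be tightened: the index-zero claim is most cleanly justified by the self-adjointness (or, bypassing Fredholm theory altogether, by the coercivity $\langle(\Lambda+I)\phi,\phi\rangle\ge c\|\phi\|_{H^{1/2}(\Ga)}^2$, which gives invertibility $H^{1/2}(\Ga)\to H^{-1/2}(\Ga)$ by Lax--Milgram, followed by elliptic bootstrapping via a parametrix); and the assertion that all constants depend only on finitely many derivatives of a parametrization of $\Ga$ requires the remark that only finitely many symbol seminorms enter the parametrix construction.
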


\begin{proof} The Lax--Milgram lemma provides a unique weak solution $u\in H^1(\Om)$ for every $g\in H^{-1/2}(\Ga)$, with
$$
\| \tr u \|_{H^{1/2}(\Ga)} \le c_\gamma \| u \|_{H^1(\Ga)} \le C \|g\|_{H^{-1/2}(\Ga)}
$$ 
by the trace theorem. Furthermore, by \citep{Taylor}, p.~410 and Section 5.7, we have $u\in H^2(\Om)$ for every $g\in H^{1/2}(\Ga)$, and
\begin{equation}\label{u-H32}
	\| \tr u \|_{H^{3/2}(\Ga)} \le c_\gamma \| u \|_{H^2(\Ga)} \le C \|g\|_{H^{1/2}(\Ga)}.
\end{equation}
The stated result then follows by interpolation of Sobolev spaces.
\end{proof}

For the finite element approximation $u_h\in \calV_h$, defined by
\begin{equation}\label{robin-h-aux}
\int_{\Om_h} \nb u_h \cdot \nb \varphi_h +  \int_{\Gah} \trh u_h \, \trh \varphi_h =  \int_{\Gah} g_h \, \trh \varphi_h
\quad\ \text{ for all $\varphi_h\in  \calV_h$,}
\end{equation}
we have an analogous bound to Lemma~\ref{lem:robin}.

\begin{proposition} \label{prop:robin-h} In the situation of Lemma~\ref{lem:robin}, the finite element approximation $u_h$ obtained from a  quasi-uniform and shape-regular family of triangulations satisfies
$$
\| \trh u_h \|_{H^1(\Ga_h)} \le C \|g_h\|_{L^2(\Ga_h)},
$$
where $C$ depends on bounds of finitely many derivatives of a parametrization of the boundary surface $\Ga$ and on the quasi-uniformity and shape regularity bounds, but is independent of the mesh size $h$ and of $g_h\in L^2(\Ga_h)$.
\end{proposition}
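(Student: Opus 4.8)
The plan is to reduce the discrete Robin stability to three ingredients: an elementary energy estimate, which controls $\trh u_h$ in $H^{1/2}(\Gah)$; the improved boundary regularity of the \emph{continuous} Robin problem from Lemma~\ref{lem:robin}; and a finite element error estimate which, combined with an inverse inequality on the boundary mesh, upgrades the $H^{1/2}(\Gah)$ bound to an $H^1(\Gah)$ bound. The quasi-uniformity and shape-regularity assumptions supply the required $h$-uniform trace and inverse inequalities, while the closeness of $\Omh$ to $\Om$ supplies $h$-uniform constants in the Poincaré--Friedrichs inequality and in the norm equivalences of the lift. I would first note that $\|v_h\|_{H^1(\Omh)}^2$ is $h$-uniformly equivalent to $\|\nb v_h\|_{L^2(\Omh)}^2 + \|\trh v_h\|_{L^2(\Gah)}^2$; then, testing \eqref{robin-h-aux} with $\varphi_h = u_h$ and using the $h$-uniform trace inequality on the right-hand side gives $\|u_h\|_{H^1(\Omh)} \le c\,\|g_h\|_{L^2(\Gah)}$, hence also $\|\trh u_h\|_{H^{1/2}(\Gah)} \le c\,\|g_h\|_{L^2(\Gah)}$ by the $h$-uniform trace map $H^1(\Omh) \to H^{1/2}(\Gah)$.

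Next, let $u \in H^{3/2}(\Om)$ be the solution of the continuous Robin problem \eqref{robin-aux} with the lifted data $g = g_h^\ell$. By Lemma~\ref{lem:robin} and the $L^2$-norm equivalence of the lift, $\|\gamma u\|_{H^1(\Ga)} \le c\,\|g_h^\ell\|_{L^2(\Ga)} \le c\,\|g_h\|_{L^2(\Gah)}$. A Strang-type (variational crime) estimate comparing \eqref{robin-h-aux} with \eqref{robin-weak-aux} then gives $\|u_h^\ell - u\|_{H^1(\Om)} \le c\,h^{1/2}\,\|g_h\|_{L^2(\Gah)}$: here the best-approximation contribution is $\inf_{v_h \in \calV_h}\|u - v_h^\ell\|_{H^1(\Om)} \le c\,h^{1/2}\,\|u\|_{H^{3/2}(\Om)}$, obtained from the isoparametric Scott--Zhang interpolant by interpolation between its $L^2$- and $H^1$-approximation orders, while the geometric consistency terms are of strictly higher order in $h$ (they only involve $\nb u \in L^2(\Om)$ and $h$-bounded geometric defects). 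Applying the $h$-uniform trace map and the norm equivalence of the lift once more yields $\|\trh u_h - (\gamma u)^{-\ell}\|_{H^{1/2}(\Gah)} \le c\,h^{1/2}\,\|g_h\|_{L^2(\Gah)}$.

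Finally I would combine these estimates on the boundary finite element space $\calS_h$. Let $I_h^{\textnormal{SZ},\Ga}$ denote the boundary isoparametric Scott--Zhang interpolation onto $\calS_h$. Splitting $\trh u_h = \bigl(\trh u_h - I_h^{\textnormal{SZ},\Ga}(\gamma u)^{-\ell}\bigr) + I_h^{\textnormal{SZ},\Ga}(\gamma u)^{-\ell}$, the second term is bounded in $H^1(\Gah)$ by $c\,\|\gamma u\|_{H^1(\Ga)} \le c\,\|g_h\|_{L^2(\Gah)}$ by $H^1$-stability of Scott--Zhang and the lift norm equivalence. The first term lies in $\calS_h$, so the $h$-uniform inverse estimate $\|\chi_h\|_{H^1(\Gah)} \le c\,h^{-1/2}\,\|\chi_h\|_{H^{1/2}(\Gah)}$ reduces its bound to the $H^{1/2}(\Gah)$ norm, which is at most $\|\trh u_h - (\gamma u)^{-\ell}\|_{H^{1/2}(\Gah)} + \|(\gamma u)^{-\ell} - I_h^{\textnormal{SZ},\Ga}(\gamma u)^{-\ell}\|_{H^{1/2}(\Gah)} \le c\,h^{1/2}\,\|g_h\|_{L^2(\Gah)}$, the two contributions coming from the previous paragraph and from the fractional Scott--Zhang approximation estimate together with Lemma~\ref{lem:robin}. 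Multiplying by $h^{-1/2}$ produces an $O(1)$ factor, and adding the two terms gives $\|\trh u_h\|_{H^1(\Gah)} \le c\,\|g_h\|_{L^2(\Gah)}$, which is the assertion.

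The main obstacle is the borderline regularity: the solution of the continuous Robin problem is only $H^{3/2}(\Om)$, so the best-approximation rate is exactly $h^{1/2}$, just enough to absorb the $h^{-1/2}$ loss in the surface inverse inequality. One therefore has to verify carefully that the geometric perturbation (Strang) terms are genuinely of higher order at this low regularity, and that the isoparametric Scott--Zhang stability and approximation estimates, as well as the trace and inverse inequalities, on the curved higher-order meshes $\Omh$ and $\Gah$ hold with $h$-independent constants. As in the proof of Proposition~\ref{prop:Dirichlet-h}, these ingredients are available from the isoparametric interpolation theory and the lift-norm equivalences cited there.
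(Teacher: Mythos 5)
Your proposal is correct, and its skeleton is the same as the paper's: exploit the borderline $H^{3/2}(\Om)$ regularity of the continuous Robin solution (Lemma~\ref{lem:robin}) to get an $O(h^{1/2})$ bulk error, split the discrete trace into a finite element difference (paid for by the $h^{-1/2}$ boundary inverse inequality) plus a stable projection of $\tr u$ onto $\calS_h$ controlled by $\|\tr u\|_{H^1(\Ga)}\le C\|g\|_{L^2(\Ga)}$. The difference lies in the technical realization. The paper works with the Ritz map $\wt R_h$ associated with the Robin bilinear form, so that $u_h-\wt R_h u$ satisfies a discrete equation whose residual is the pure geometric data error $m_h(g_h,\cdot)-m(g_h^\ell,\cdot)=O(h^2)\|g_h\|_{L^2(\Ga_h)}$, giving the sharp bound $\|\trh(u_h-\wt R_h u)\|_{H^1(\Ga_h)}\le Ch^{3/2}\|g_h\|_{L^2(\Ga_h)}$, and on the boundary it uses the surface Ritz map $R_h^\Ga$ with its $H^1$ stability and interpolated $O(h^{1/2})$ error in $H^{1/2}(\Ga)$. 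You instead invoke a Strang-type estimate with Scott--Zhang best approximation in the bulk and the boundary Scott--Zhang interpolant in place of $R_h^\Ga$; this is legitimate and arguably more elementary, but it shifts the burden onto verifying that the geometric (variational-crime) consistency terms are genuinely of higher order than $h^{1/2}$ at the low regularity $u\in H^{3/2}(\Om)$ — which does hold, since after inserting a quasi-interpolant the bilinear-form geometric errors are $O(h)$ against $H^1$-bounded discrete functions and the data term is $O(h^2)$, but this step should be spelled out, whereas the paper's Galerkin-orthogonality of $\wt R_h$ makes the residual purely a data term and avoids touching $a_h-a$ on the low-regularity solution altogether. What your route buys is independence from the Ritz-map error theory of \citep{highorderESFEM} and \citep{ElliottRanner_unified}; what the paper's route buys is cleaner and quantitatively sharper intermediate bounds, reusing machinery already needed elsewhere in the error analysis.
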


\begin{proof} We denote the left-hand sides of \eqref{robin-weak-aux} and \eqref{robin-h-aux}  by $a(u,\varphi)$ and $a_h(u_h,\varphi_h)$, respectively, and the right-hand sides by $m(g,\tr\varphi)$ and $m_h(g_h,\trh\varphi_h)$, respectively.
Note that $a(\cdot,\cdot)$ is an $H^1(\Om)$-elliptic bilinear form, and $m(\cdot,\cdot)$ is the $L^2(\Ga)$ inner product.
%\bbk We remind here to the norm equivalence of $\|\cdot\|_{a}$ and $\|\cdot\|_{H^1(\Om)}$. \ebk 

We consider the Ritz map $\wt R_h \colon H^1(\Om)\to \calV_h\subset H^1(\Om_h)$ that defines $\wt R_h w\in\calV_h$ through
$$
a_h(\wt R_h w, \varphi_h)=a(w,\varphi_h^\ell) \quad\text{ for all }\ \varphi_h\in\calV_h.
$$
We define $R_h \colon H^1(\Om)\to \calV_h^\ell \subset H^1(\Om)$ by the lift $R_h w = (\wt R_h w)^\ell$.
It is known from \cite[Lemma~3.8]{ElliottRanner_unified} that 
\begin{align*}
	\|  R_h w \|_{H^1(\Om)} &\le C\, \| w \|_{H^1(\Om)} &\text{for all }\ w \in H^1(\Om),
	\\
	\| R_h w -w \|_{H^1(\Om)} &\le Ch\, \| w \|_{H^2(\Om)} &\text{for all }\ w \in H^2(\Om),
\end{align*}
which further implies, by interpolation of Sobolev spaces,
\begin{equation}
	\label{eq:interpolated Ritz error - Om}
	\| R_h w -w \|_{H^1(\Om)} \le Ch^{1/2}\, \| w \|_{H^{3/2}(\Om)} \quad\text{ for all }\ w \in H^{3/2}(\Om) .
\end{equation}
In particular this holds true for the solution $u\in H^{3/2}(\Om)$ of \eqref{robin-aux} with the lifted finite element function $g=g_h^\ell$.
%which is bounded by $\| g \|_{L^2(\Ga)}\le C \, \| g_h \|_{L^2(\Ga_h)}$ by the $L^2$ norm equivalence under the lift \citep{???}.

We estimate
\begin{equation}\label{uh-norm-split}
	\| \trh u_h \|_{H^1(\Ga_h)} \le C \, \| \trh (u_h - \wt R_h u )\|_{H^{1}(\Ga_h)} + \| \trh \wt R_h u \|_{H^{1}(\Ga_h)}.
\end{equation}

(i) Consider the first term on the right-hand side. By an inverse inequality and the trace inequality (via a pair of norm equivalences) we obtain
\begin{align*}
	\| \trh (u_h - \wt R_h u )\|_{H^{1}(\Ga_h)} &\le Ch^{-1/2} \| \trh (u_h - \wt R_h u )\|_{H^{1/2}(\Ga_h)}
	\le Ch^{-1/2} \| u_h - \wt R_h u \|_{H^{1}(\Om_h)} \,.
\end{align*}
To bound this term, we note that by the equations \eqref{robin-weak-aux} and \eqref{robin-h-aux} for $u$ and $u_h$ and the definition of the Ritz map,
$$
a_h(u_h - \wt R_h u, \varphi_h) = m_h(g_h,\trh\varphi_h) - m(g_h^\ell,(\trh\varphi)^\ell).
$$
It is known from \cite[Lemma~5.5]{DziukElliott_L2}, \cite[Lemma~5.6]{highorderESFEM} that the right-hand side is bounded by 
%\bbk [kb: We should probably state with . (???)] \ebk 
\begin{align*}
	| m_h(g_h,\trh\varphi_h) - m(g_h^\ell,(\trh\varphi)^\ell) | 
	\le &\ C h^2 \| g_h \|_{L^2(\Ga_h)}\, \|\trh \varphi_h \|_{L^2(\Ga_h)} 
	\leq C h^2 \| g_h \|_{L^2(\Ga_h)}\, \|\varphi_h \|_{H^1(\Om_h)}.
\end{align*}
(For isoparametric finite elements of polynomial degree $k$ the above estimate holds with $h^{k+1}$ instead of $h^2$.)

By the $h$-uniform equivalence of the norm induced by $a_h$ and the $H^1(\Om_h)$-norm on $\calV_h$, this yields
$$
\| u_h - \wt R_h u \|_{H^{1}(\Om_h)} \le C h^2 \| g_h \|_{L^2(\Ga_h)} 
$$
and hence
$$
\| \trh (u_h - \wt R_h u )\|_{H^{1}(\Ga_h)} \le C h^{3/2} \| g_h \|_{L^2(\Ga_h)}.
$$

(ii) Now we turn to the second term on the right-hand side of \eqref{uh-norm-split}. By the $H^1$ norm equivalence under the lift \cite[equation~(2.16)]{Demlow2009} 
%\bbk [kb: We will have a lift section, should be \verb|\eqref|-d then.] \ebk  
we have, with $R_h u = (\wt R_h u)^\ell$,
$$
\| \trh \wt R_h u \|_{H^{1}(\Ga_h)} \le C \| (\trh \wt R_h u)^\ell \|_{H^{1}(\Ga)} = C \| \tr R_h u \|_{H^{1}(\Ga)} \,.
$$
Let $R_h^\Ga \colon H^1(\Ga)\to \calS_h^\ell \subset H^1(\Ga)$ be the Ritz map for $-\Delta_\Ga u + u =0$ on $\Ga$ (see, e.g., \cite[Definition~6.1]{highorderESFEM}), i.e., $R_h^\Ga w=(\wt R_h^\Ga w)^\ell$, where $w_h = \wt R_h^\Ga w\in \calS_h$ is determined by
$$
\int_{\Ga_h} \Bigl(\nabla_\Ga w_h \cdot \nabla_\Ga \varphi_h + w_h\, \varphi_h\Bigr) =
\int_{\Ga} \Bigl(\nabla_\Ga w \cdot \nabla_\Ga \varphi_h^\ell + w\, \varphi_h^\ell\Bigr)
\quad\ \text{for all $\varphi_h\in \calS_h$.}
$$
By the proof of Theorem~6.3 \citep{highorderESFEM} (working with $k=0$ and the $H^1$ stability of the Ritz map in the estimates of part (b) of the proof therein) that 
\begin{align*}
	\|  R_h^\Ga w \|_{H^1(\Ga)} &\le C\, \| w \|_{H^1(\Ga)} &\text{for all }\ w \in H^1(\Ga),
	\\
	\| R_h^\Ga w -w \|_{L^2(\Ga)} &\le Ch\, \| w \|_{H^1(\Ga)} &\text{for all }\ w \in H^2(\Ga).
\end{align*}
Note that we have formulated the error estimate such that we have the $H^1(\Ga)$ norm on the right-hand side. 
By interpolation of Sobolev spaces this further yields 
\begin{equation}
	\label{eq:interpolated Ritz error - Ga}
	\| R_h^\Ga w -w \|_{H^{1/2}(\Ga)} \le Ch^{1/2} \, \| w \|_{H^{1}(\Ga)} .
\end{equation} 
We estimate
$$
\| \tr R_h u \|_{H^{1}(\Ga)} \le  \| \tr R_h u - R_h^\Ga \tr u \|_{H^{1}(\Ga)} + \| R_h^\Ga \tr u \|_{H^{1}(\Ga)}.
$$
Using first an inverse inequality (via a pair of norm equivalences) and then the trace inequality with \eqref{eq:interpolated Ritz error - Om} and \eqref{eq:interpolated Ritz error - Ga}, \eqref{eq:robin - solution bounds} (where $g = g_h^\ell$), the first term on the right-hand side is bounded by
\begin{align*}
	\| \tr R_h u - R_h^\Ga \tr u \|_{H^{1}(\Ga)} &\le Ch^{-1/2}  \| \tr R_h u - R_h^\Ga \tr u \|_{H^{1/2}(\Ga)}
	\\
	&\le  Ch^{-1/2} \bigl( \| \tr R_h u - \tr u \|_{H^{1/2}(\Ga)} + \|\tr u - R_h^\Ga \tr u \|_{H^{1/2}(\Ga)} \bigr) 
	\\
	&\le   Ch^{-1/2} \bigl( \|  R_h u -  u \|_{H^{1}(\Om)} + \|\tr u - R_h^\Ga \tr u \|_{H^{1/2}(\Ga)} \bigr) 
	\\
	&\le Ch^{-1/2} \bigl( Ch^{1/2} \| u \|_{H^{3/2}(\Om)} + Ch^{1/2} \| \tr u \|_{H^1(\Ga)} \bigr)
	\\
	&\le C \| g \|_{L^2(\Ga)} . %= C \| g_h^\ell \|_{L^2(\Ga)} \le C  \| g_h \|_{L^2(\Ga_h)}.
\end{align*}
The second term is bounded by 
$$
\| R_h^\Ga \tr u \|_{H^{1}(\Ga)} \le C  \, \| \tr u \|_{H^1(\Ga)} \le C \| g \|_{L^2(\Ga)} . 
$$
Combining the bounds of  (i) and (ii) and noting that the $L^2$ norm equivalence under the lift yields
$\| g \|_{L^2(\Ga)} = \| g_h^\ell \|_{L^2(\Ga)} \le C  \| g_h \|_{L^2(\Ga_h)}$, we finally obtain the stated bound for $\| \trh u_h \|_{H^1(\Ga_h)}$.
\end{proof}

\section*{Acknowledgement}
The work of Christian Lubich is supported by Deutsche Forschungsgemeinschaft (DFG, German Research Foundation) -- Project-ID 258734477 -- SFB 1173.

The work of Bal\'azs Kov\'acs is funded by the Heisenberg Programme of the Deutsche Forschungsgemeinschaft (DFG, German Research Foundation) -- Project-ID 446431602.

\bibliographystyle{abbrvnat}
\bibliography{bulksurface_literature}

\end{document}